\newtheorem{thm}{Theorem}[section]
\newtheorem{prop}[thm]{Proposition}
\newtheorem{lemma}[thm]{Lemma}
\newtheorem{cor}[thm]{Corollary}
\newtheorem{rem}[thm]{Remark}
\theoremstyle{definition}
\newtheorem{example}[thm]{Example}
\newtheorem{definition}[thm]{Definition}
\numberwithin{equation}{section}
\newcommand{\N}{\mathbb{N}}
\newcommand{\R}{\mathbb{R}}
\newcommand{\dom}{{\mathrm{dom}}}
\newcommand{\norm}[1]{\left\Vert #1 \right\Vert}
\newcommand{\abs}[1]{\left\vert #1 \right\vert}
\renewcommand{\H}{{\cal H}}
\newcommand{\X}{{\cal X}}
\renewcommand{\div}{\operatorname{div}}
\newcommand{\divw}{\mathrm{div}_w\,}
\newcommand{\nablaw}{\nabla_w}
\newcommand{\argmin}{\mathrm{arg}\min}
\newcommand{\argmax}{\mathrm{arg}\max}
\newcommand{\st}{\,:\,}
\newcommand{\supp}{\mathrm{supp}}
\newcommand{\dx}{\,\mathrm{d}x}
\renewcommand{\d}{\,\mathrm{d}}
\newcommand{\calN}{\mathcal{N}}
\newcommand{\calM}{\mathcal{M}}
\newcommand{\calJ}{\mathcal{J}}
\newcommand{\sgn}{\mathrm{sgn}}
\newcommand{\eps}{\varepsilon}
\newcommand{\sg}{\zeta}
\newcommand{\dist}{\mathrm{dist}}
\newcommand{\extr}{\operatorname{extr}}
\newcommand{\inn}{\operatorname{int}}
\pgfplotsset{compat=1.14}
\begin{document}

\title{Structural analysis of an $L$-infinity variational problem and relations to distance functions}
\author{Leon Bungert\thanks{Department Mathematik, Universit\"{a}t Erlangen-N\"{u}rnberg, Cauerstrasse 11, 91058 Erlangen, Germany. \texttt{\{leon.bungert,martin.burger\}@fau.de}} 
\and Yury Korolev\thanks{Department of Applied Mathematics and Theoretical Physics, University of Cambridge, Wilberforce Road, Cambridge CB3 0WA, UK. \texttt{y.korolev@damtp.cam.ac.uk}}
\and Martin Burger\footnotemark[1] }
\maketitle

\begin{abstract}
In this work we analyse the functional $\calJ(u)=\norm{\nabla u}_\infty$ defined on Lipschitz functions with homogeneous Dirichlet boundary conditions. 
Our analysis is performed directly on the functional without the need to approximate with smooth $p$-norms. 
We prove that its ground states coincide with multiples of the distance function to the boundary of the domain. Furthermore, we compute the $L^2$-subdifferential of $\calJ$ and characterize the distance function as unique non-negative eigenfunction of the subdifferential operator.
We also study properties of general eigenfunctions, in particular their nodal sets.
Furthermore, we prove that the distance function can be computed as asymptotic profile of the gradient flow of $\calJ$ and construct analytic solutions of fast marching type. 
In addition, we give a geometric characterization of the extreme points of the unit ball of $\calJ$.

Finally, we transfer many of these results to a discrete version of the functional defined on a finite weighted graph. Here, we analyze properties of distance functions on graphs and their gradients. The main difference between the continuum and discrete setting is that the distance function is not the unique non-negative eigenfunction on a graph.

{\bf Keywords: } Distance functions, nonlinear eigenfunctions, extreme points, gradient flows, weighted graphs. 

{\bf AMS Subject Classification: }  26A16, 35P30, 47J10, 47J35, 49R05, 05C12  
\end{abstract}

\tableofcontents

\section{Introduction}
\subsection{Eigenvalue problems associated to Rayleigh quotients}
Eigenvalue problems are a very old tool in mathematics with a long list of theoretical and practical applications. In particular, nonlinear eigenvalue problems have become increasingly popular in the last decades due to their challenging mathematical properties and their wide range of theoretical and practical applications. A special class of nonlinear eigenvalue problems are those which arise from a variational principle, like the minimization of a Rayleigh quotient 
\begin{align}
    \frac{J(u)}{H(u)}\to\min,
\end{align}
where $J$ and $H$ typically are convex functionals which share the same homogeneity. In this abstract setting the eigenvalue problem is often defined by
\begin{align}\label{eq:general_eigenvalue_problem}
    \lambda\partial H(u)\cap\partial J(u)\neq\emptyset,
\end{align}
where $\lambda=J(u)/H(u)$ denotes the eigenvalue and $\partial$ stands for the subdifferential. For smooth $J$ and $H$ this is exactly the condition for being a critical point of the Rayleigh quotient. Elements actually minimizing the Rayleigh quotient, and thus having the lowest possible eigenvalue, are referred to as ground states. Obviously, due to the homogeneity of $J$ and $H$ ground states are invariant under multiplication with a scalar. By choosing
\begin{align}
    J(u)=\int_\Omega|\nabla u|^p\dx,\qquad
    H(u)=\int_\Omega|u|^p\dx,
\end{align}
one obtains the eigenvalue problem of the $p$-Laplacian
\begin{align}
    \lambda|u|^{p-2}u=-\div(|\nabla u|^{p-2}\nabla u),
\end{align}
which has to be complemented with suitable boundary conditions, and is a very well-studied nonlinear eigenvalue problem (see, for instance, \cite{binding2006basis,kawohl2006positive,barles1988remarks,le2006eigenvalue,kawohl2008p}). Interesting but challenging limit cases are $p\to 1$ and $p\to\infty$ since in these cases functionals $J$ and $H$ are non-smooth and not strictly convex. In particular, this means that there can exist linearly independent ground states. For more details about the 1-Laplacian eigenvalue problem we refer to \cite{kawohl2007dirichlet}, explicit solutions can be found in \cite{bellettini2005explicit,alter2005characterization}. The infinity-Laplacian eigenvalue equation takes the form
\begin{align}\label{eq:infinity_eigen}
    0 = 
    \begin{cases}
        \min(|\nabla u|-\lambda u,-\Delta_\infty u),\quad&u>0,\\
        -\Delta_\infty u,\quad&u=0,\\
        \max(-|\nabla u|-\lambda u,-\Delta_\infty u),\quad&u<0,\\
    \end{cases}    
\end{align}
which has to be understood in the viscosity sense. Typically, the problem is complemented with homogeneous Dirichlet conditions. We refer to \cite{juutinen1999eigenvalue,juutinen1999infinity,yu2007some} for more details. Positive solutions of \eqref{eq:infinity_eigen} on a domain $\Omega$ are called infinity ground states and indeed they minimize the Rayleigh quotient
\begin{align}\label{eq:infinity_groundstate}
    u\mapsto\frac{\norm{\nabla u}_\infty}{\norm{u}_\infty}
\end{align}
among all functions $u\in W^{1,\infty}(\Omega)$ that vanish on the boundary $\partial\Omega$. However, due to the lack of strict convexity, minimizers of \eqref{eq:infinity_groundstate} are far from being unique up to scalar multiplication. In particular, the distance function $x\mapsto\dist(x,\partial\Omega)$ is always a minimizer of \eqref{eq:infinity_groundstate} but not necessarily a solution of \eqref{eq:infinity_eigen}. Furthermore, also solutions of \eqref{eq:infinity_eigen} are not unique \cite{hynd2013nonuniqueness}. The infinity-Laplacian eigenvalue problem falls under the scope of  $L^\infty$-variational problems which have been an active field of research, with the main contributions being due to Aronsson (see~\cite{aronsson2004tour} for an overview). One big challenge with these problems is that the involved subdifferentials lie in a space of measures and not in a function space. 

\subsection{Structure of regularizers}

From an application point of view, eigenvalue problems of the form \eqref{eq:general_eigenvalue_problem} are interesting since they allow to study the structural properties of the functional $J$, if it is interpreted as regularization functional. 
For instance, in the case of $J:\H\to\R\cap\{\infty\}$ being defined on a Hilbert space $\H$, and $H(\cdot)=\norm{\cdot}_\H$ coinciding with its norm, it holds that eigenfunctions $f$ are precisely the separated variables solutions to the gradient flow
\begin{align}\label{gradflow}
    \begin{cases}
    u'(t)+\partial J(u(t))\ni 0,\\
    u(0)=f,
    \end{cases}
\end{align}
In this case the solution of \eqref{gradflow} has the form $u(t)=a(t)f$ where function $a(t)$ depends on the homogeneity of $J$ (cf.~\cite{bungert2019asymptotic,bungert2019nonlinear,burger2016spectral,cohen2020introducing}).
If $J$ is one-homogeneous and $f$ is an eigenfunction, then this separated variable solution also solves the variational regularization problem 
\begin{align}\label{varprob}
    \frac{1}{2}\norm{u-f}^2_\H+tJ(u).
\end{align}
Recent results for general homogeneous functionals~\cite{bungert2019asymptotic,bungert2019nonlinear} showed that also for general data $f$, the gradient flow \eqref{gradflow} behaves like a separate variable solution asymptotically.
Under some conditions it was shown that asymptotic profiles of \eqref{gradflow} are eigenfunctions, meaning
\begin{align}\label{eq:asymptotic_profiles}
    \lim_{t\to\infty}\frac{u(t)}{\norm{u(t)}_\H}=w,\qquad
    \lim_{t\to\infty}\frac{J(u(t))}{\norm{u(t)}_\H}=\lambda,\qquad
    \lambda\frac{w}{\norm{w}_\H}\in\partial J(w).
\end{align}
Subsuming these results, one can say that eigenfunctions to some extend describe which structures are preserved by regularization methods like \eqref{gradflow} or \eqref{varprob}.
For example, in the case of $J$ being the total variation, it is well-known that a large class of eigenfunctions are given by so-called calibrable sets \cite{alter2005characterization}, which provides an explanation of the staircasing effect in total variation regularization \cite{burger2013guide}. Furthermore, the study of regularizers through their eigenfunction has sparked applications in image processing, as for instance in \cite{gilboa2014total,benning2017nonlinear}.

An alternative way to study structural properties of regularizers is through the extreme points of their unit ball, where the extreme points of a convex set $C$ in a vector space are given by
\begin{align}
    \extr(C):=\left\lbrace u\in C\st \nexists\,v\neq w\in C,\,\lambda\in(0,1)\st u=\lambda v+(1-\lambda)w\right\rbrace.
\end{align}
So-called representer theorems study qualitative properties of solutions to the optimization problems
\begin{subequations}\label{eq:prob_repr_thm}
 \begin{align}
    &u^*\in\argmin_{u\in \X}J(u)\st Au=f,\\
    \text{or}\quad&u^*\in\argmin_{u\in\X} F(Au)+J(u),
\end{align}
\end{subequations}
where $\X$ is a Banach space and $A:\X\to\H$ is a linear operator mapping into a \emph{finite-dimensional} Hilbert space. The functionals $J$ and $F$ are convex regularization and data fitting functionals, respectively. 
Recent results \cite{bredies2018sparsity,boyer2019representer,unser2019unifying} show that in this case there exists a minimizer $u^*$ of \eqref{eq:prob_repr_thm} which can essentially be expressed as finite linear combination of extreme points in the unit ball of $J$, meaning
\begin{align}
    u^*=n+\sum_{i=1}^k c_iu_i,
\end{align}
where $n\in\calN(J)$ denotes an element in the null-space of $J$, $(c_i)$ are real numbers, and $(u_i)\subset\extr(B_J)$ are extreme points of the unit ball $B_J=\{u\in\X\st J(u)\leq 1\}$.
Typically, extreme points have interesting geometric properties which they hand down to minimizers of~\eqref{eq:prob_repr_thm}. If $J$ equals the total variation of a function, for instance, extreme points are given by characteristic functions of so-called simple sets \cite{bredies2018sparsity}, which gives yet another explanation for the staircasing phenomenon.

\subsection{Set-up and outline of this paper}
Let $\Omega\subset\R^n$ be an open and bounded domain and for $1\leq p\leq\infty$ we let $\norm{\cdot}_p$ denote the Lebesgue $p$-norms of functions or vector fields. We define the function space
\begin{align}
    W^{1,\infty}_0(\Omega):=\{u\in W^{1,\infty}(\Omega)\st u=0\text{ on }\partial\Omega\}
\end{align}
which consists of all Lipschitz continuous functions, vanishing on $\partial\Omega$.
In this paper we study the functional 
\begin{align}\label{eq:fctl_v2}
\calJ(u)=
\begin{cases}
\norm{\nabla u}_\infty,\quad &u\in W^{1,\infty}_0(\Omega),\\
+\infty,\quad &u\in L^2(\Omega)\setminus W^{1,\infty}_0(\Omega),
\end{cases}
\end{align}
which coincides with the Lipschitz constant if $u\in W^{1,\infty}_0(\Omega)$.
We would like to understand its structure in terms of eigenfunctions and extreme points.
\begin{rem}\label{rem:lipschitz}
Although the space $W^{1,\infty}(\Omega)$ only coincides with the Lipschitz functions on $\Omega$ if $\Omega$ is at least quasi-convex \cite{heinonen2005lectures}, for the space $W^{1,\infty}_0(\Omega)$ this is always true.
Furthermore, $\calJ(u)$ equals the Lipschitz constant of $u\in W^{1,\infty}_0(\Omega)$.
This is due to the fact that functions in $W^{1,\infty}_0(\Omega)$ can be extended by zero to lie in $W^{1,\infty}(\R^n)$, which coincides with the space of all Lipschitz functions due to the convexity of $\R^n$. 
\end{rem}

Although $\calJ$ is defined on $L^2(\Omega)$ and hence admits standard Hilbert space subdifferential calculus, it comes with many of the challenges and properties of a pure $L^\infty$-variational problem. The associated Rayleigh quotient is 
\begin{align}\label{eq:rayleigh_calJ}
    u\mapsto\frac{\calJ(u)}{\norm{u}_2}=\frac{\norm{\nabla u}_\infty}{\norm{u}_2},
\end{align}
and admits an easier treatment than the ``pure'' $L^\infty$ Rayleigh quotient \eqref{eq:infinity_groundstate} due to the presence of the $L^2$-norm in the denominator.
In particular, \eqref{eq:rayleigh_calJ} has essentially a unique minimizer, given by the distance function to the boundary of the domain.
Note that a similar functional has been studied in \cite{burger2015infimal} and a Rayleigh quotient of mixed $L^\infty$-$L^2$-type was considered in~\cite{barron2005minimizing}.
While in the first work the analysis is limited to the one-dimensional case, and in the second work the authors approximate the $L^\infty$-norm with smooth $p$-norms, our subdifferential techniques work in arbitrary dimension and without approximation.
The abstract eigenvalue problem \eqref{eq:general_eigenvalue_problem} associated to $\calJ$ becomes
\begin{align}\label{eq:eigenvalue_problem_calJ}
    \lambda \frac{u}{\norm{u}_2}\in\partial\calJ(u).
\end{align}

We also consider a discrete variant of $\calJ$ defined on a finite weighted graph and transfer most of our continuous results to the discrete setting. Naturally, due to the finite dimensional character of graphs, the proofs simplify a lot. However, the non-local nature of graphs makes the results interesting, nevertheless. 
In particular, the ground state of this functional is also given by the distance function with respect to a the weighted graph distance. From an applied point of view, this interpretation as nonlinear eigenfunction opens the doors for new computational methods for the distance function on graphs. Traditional approaches to compute distance functions on graphs or grids typically rely on level set methods or schemes to solve the Eikonal equation $|\nabla u|=1$, see for instance \cite{memoli2001fast,desquesnes2010efficient,desquesnes2013eikonal}. Although this paper is mainly of theoretical nature, in Figure~\ref{fig:distance_fcts} we show some distance functions on graphs which were computed using asymptotic profiles of gradient flows in the sense of \eqref{eq:asymptotic_profiles}, see also~\cite{bungert2019nonlinear,bungert2019asymptotic,bungert2019computing} for theory and computational results for the 1-Laplacian on graphs, respectively.
\begin{center}
\begin{figure}
    \centering
    \begin{minipage}[t]{0.48\textwidth}
        \vspace*{0cm}
        {\includegraphics[width=0.6\textwidth,trim=4cm 4cm 4cm 4cm, clip]{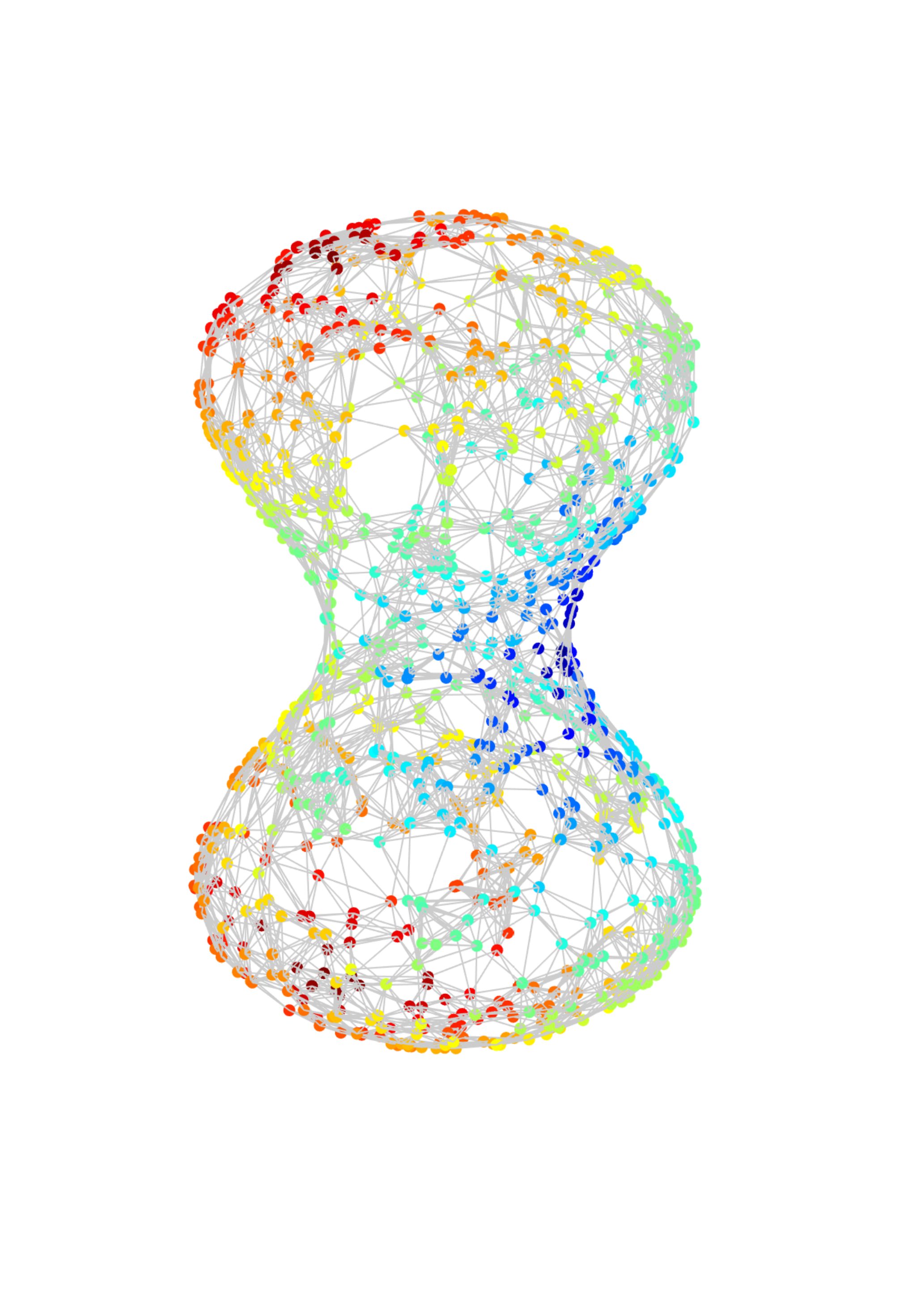}}
    \end{minipage}  
    \hfill
    \begin{minipage}[t]{0.48\textwidth}
        \vspace*{0cm}
        {\includegraphics[width=0.7\textwidth,trim=6cm 9cm 5cm 8cm, clip]{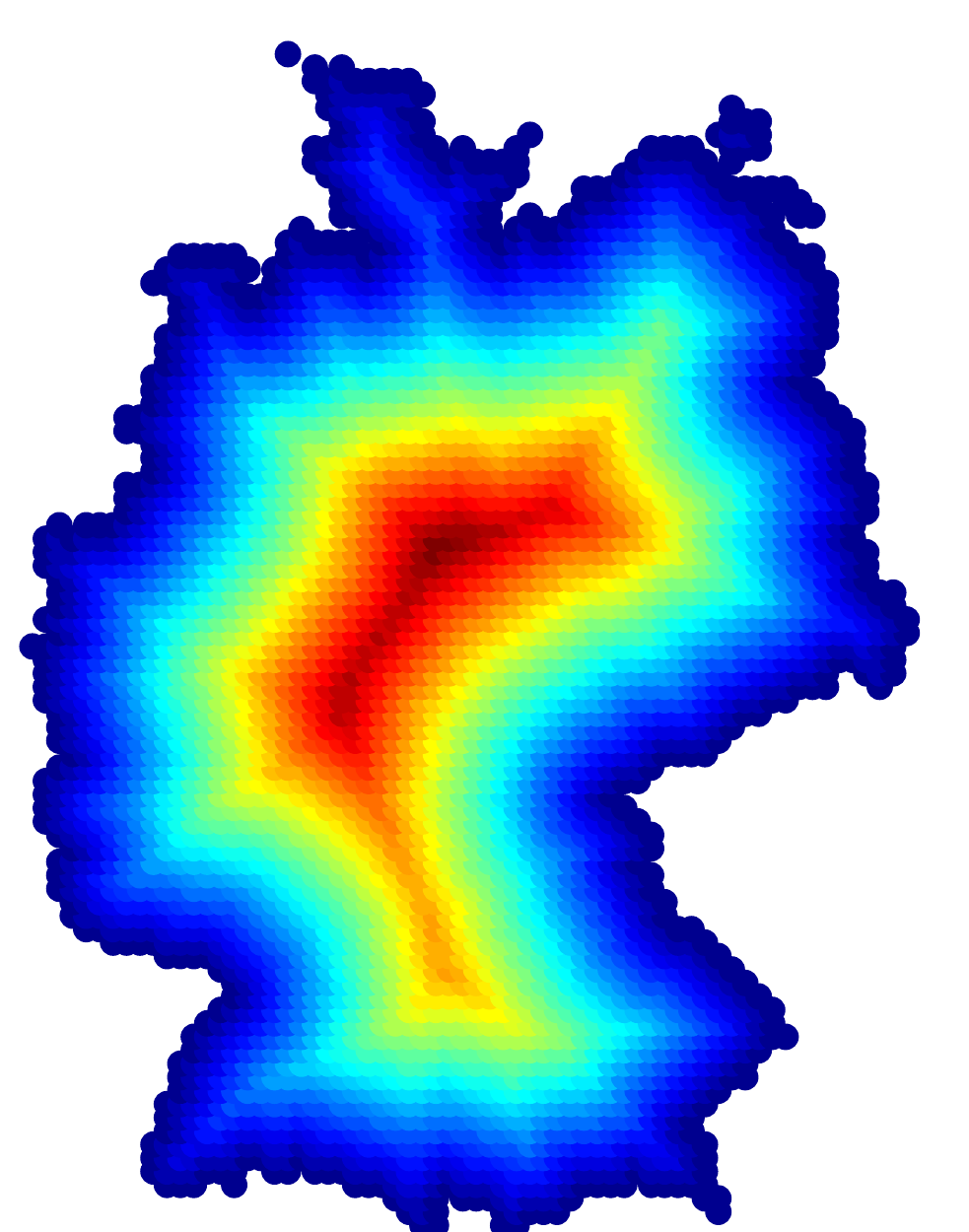}}
    \end{minipage}  
    \caption{\textbf{Left:} distance function to a point on a discretized manifold, \textbf{right:} distance function to the boundary of a grid graph}
    \label{fig:distance_fcts}
\end{figure}
\end{center}%

This paper is organized as follows. In Section~\ref{sec:analysis_fctl} we analyze spectral properties of the functional $\calJ$.
We characterize ground states as distance functions and compute the $L^2$-subdifferential of in Sections~\ref{sec:ground_states} and~\ref{sec:subdifferential}, respectively.
Subsequently, in Section~\ref{sec:eigenfunctions} we study the geometrical properties of eigenfunctions. 
In particular, we prove that under a regularity condition, the nodal set of eigenfunctions has zero Lebesgue measure.
Next, in Section~\ref{sec:explicit_sol} we construct an explicit solution to the gradient flow and variational regularization problem of $\calJ$ which converges to the distance function and possesses level sets that move parallelly to the boundary of the domain.
In Section~\ref{sec:extreme_points} we give a characterization of the extreme points of the unit ball, which gives intuition on the geometrical structure of optimization problems involving~$\calJ$.
In Section~\ref{sec:graph} we transfer most of these results to finite weighted graphs. We prove that ground states are distance functions in Section~\ref{sec:graph_dist} and study some properties of graph distance functions. In Section~\ref{sec:graph_properties} we finally collect the graph versions of our results from Sections~\ref{sec:analysis_fctl} and~\ref{sec:extreme_points}, hereby skipping most of the proofs since they are elementary, given the proofs in the continuous setting.

We would like to conclude with a remark on how to read this paper. For those readers who are primarily interested in graphs, it is possible to only read Section~\ref{sec:graph} since it is self-contained in its presentation. Similarly, readers interested mainly in the continuous setting are welcome to only read Section~\ref{sec:analysis_fctl} since the results in the graph setting are somewhat similar. 

\section{Spectral properties}
\label{sec:analysis_fctl}
\subsection{Ground states}
\label{sec:ground_states}
In this section we will investigate the ground states of $\calJ$, i.e., minimizers of the nonlinear Rayleigh quotient
\begin{align}\label{eq:ground_state}
u^\ast\in\argmin_{u\in W^{1,\infty}_0(\Omega)}\frac{\calJ(u)}{\norm{u}_2}.
\end{align}
We prove that---up to multiplicative constants---they coincide with the distance function of the boundary $\partial\Omega$ of the domain which is defined as
\begin{align}\label{eq:distance_fct}
d(x):=\dist(x,\partial\Omega):=\inf_{y\in\partial\Omega}|x-y|.
\end{align}
Note that this in particular implies that ground states are unique up to scaling, which is often referred to as simplicity. Indeed, our statement is slightly more general since it holds for minimizers of 
\begin{align}\label{eq:p_ground_state}
u^\ast\in\argmin_{u\in W^{1,\infty}_0(\Omega)}\frac{\calJ(u)}{\norm{u}_p},\quad 1\leq p<\infty,
\end{align}
where \eqref{eq:ground_state} is a special case when choosing $p=2$.

\begin{thm}[Ground states are distance functions]\label{thm:ground_states_distance}
All solutions $u^\ast$ to \eqref{eq:p_ground_state} are multiples of the distance function to $\partial\Omega$, given by \eqref{eq:distance_fct}.
\end{thm}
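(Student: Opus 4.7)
My plan rests on the Lipschitz comparison $|u(x)| \le \calJ(u)\,d(x)$ satisfied pointwise in $\Omega$ by every $u \in W^{1,\infty}_0(\Omega)$. First I would establish this bound directly from the Lipschitz interpretation of Remark~\ref{rem:lipschitz}: the zero-extension of $u$ to $\R^n$ is $L$-Lipschitz with $L = \calJ(u)$, so for any $x \in \Omega$ and any $y \in \partial\Omega$ one has $|u(x)| = |u(x) - u(y)| \le L |x - y|$, and taking the infimum over $y \in \partial\Omega$ yields $|u(x)| \le L\, d(x)$.

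Next, I would raise this bound to the $p$-th power and integrate, giving
\[
\norm{u}_p \le \calJ(u)\,\norm{d}_p, \qquad\text{hence}\qquad \frac{\calJ(u)}{\norm{u}_p} \ge \frac{1}{\norm{d}_p}.
\]
To see that this lower bound is attained, I would verify that $d$ itself lies in $W^{1,\infty}_0(\Omega)$ with $\norm{\nabla d}_\infty = 1$: the $\le 1$ inequality is immediate from $d$ being 1-Lipschitz, while the reverse bound follows by differentiating $d$ along any segment from an interior point $x_0$ to a nearest boundary point $y_0$, where $d$ grows at unit speed. Thus $\calJ(d)/\norm{d}_p = 1/\norm{d}_p$ and $d$ realises the minimum.

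The remaining task is uniqueness up to scaling. Let $u^\ast$ be any minimizer and set $L^\ast := \calJ(u^\ast)$. Equality in the integrated comparison forces $|u^\ast(x)|^p = (L^\ast)^p d(x)^p$ almost everywhere; by continuity of both sides this identity holds pointwise throughout $\Omega$. I would then use a sign argument: the map $\sigma(x) := u^\ast(x)/(L^\ast d(x))$ takes values in $\{-1, +1\}$ on the open set $\{d > 0\} = \Omega$, is continuous, and therefore locally constant. Consequently $u^\ast = \pm L^\ast d$ on each connected component of $\Omega$.

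The genuinely substantive ingredient is the Lipschitz-to-distance comparison, which is essentially immediate from the zero-extension trick; the main obstacle I anticipate is bureaucratic rather than conceptual, namely the passage from the almost-everywhere equality $|u^\ast| = L^\ast d$ to the signed identity $u^\ast = \pm L^\ast d$, which hinges on continuity and on the component structure of $\Omega$ (for disconnected $\Omega$ the sign can a priori differ between components, so ``multiple of $d$'' must be interpreted componentwise). A pleasant feature is that the argument is uniform in $p \in [1,\infty)$: since the pointwise bound $|u|\le \calJ(u)\,d$ is $p$-independent, integration against any $L^p$ norm yields the sharp estimate, so the case $p=2$ of~\eqref{eq:ground_state} requires no separate treatment.
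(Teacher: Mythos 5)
Your proof is correct, and it takes a genuinely different and more self-contained route than the paper. The paper's own argument is terse: by one-homogeneity it reformulates the ground-state problem as the constrained maximization $\hat u \in \argmax\{\norm{u}_p : |\nabla u|\le 1 \text{ a.e.},\, u|_{\partial\Omega}=0\}$ and then invokes a result of Zagatti (\cite{zagatti2014maximal}) characterizing the maximizer as the unique viscosity solution of the eikonal equation, namely the distance function. Your proof instead works directly from the pointwise Lipschitz comparison $|u(x)| \le \calJ(u)\,d(x)$ — which, as you note, is immediate from the zero-extension of Remark~\ref{rem:lipschitz} — integrates it to bound the Rayleigh quotient below, and handles uniqueness via the equality case in this integrated inequality plus a continuity-of-sign argument on the connected open set $\{d>0\}=\Omega$. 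The gain is that you avoid any appeal to viscosity-solution theory or to a citation for the maximal Lipschitz subsolution of the eikonal equation: everything is elementary once the Lipschitz-to-distance comparison is in hand, and the argument is manifestly uniform in $p\in[1,\infty)$. One small point you could tighten: to see $\norm{\nabla d}_\infty = 1$, rather than invoking the behaviour along rays to nearest boundary points, it is even quicker to observe that if $d$ were Lipschitz with constant $L<1$, then for any $x\in\Omega$ and $y\in\partial\Omega$ one would have $d(x)\le L|x-y|$, hence $d(x)\le L\,d(x)$ and so $d\equiv 0$, contradicting $d>0$ on the nonempty open set $\Omega$. Your caveat about disconnected $\Omega$ is well taken, but the paper's standing assumption is that $\Omega$ is a domain, i.e.\ connected, so the sign is globally constant.
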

\begin{proof}
By homogeneity, the solutions to \eqref{eq:p_ground_state} are given by multiples of the solutions to 
\begin{align*}
\hat{u}\in\;&\argmax\left\lbrace{\norm{u}_p}\st\calJ(u)=1\right\rbrace\\
=\;&\argmax\left\lbrace{\norm{u}_p}\st |\nabla u|\leq 1\text{ a.e. in }\Omega,\,u\vert_{\partial\Omega}=0\right\rbrace.
\end{align*}
From \cite{zagatti2014maximal} we infer that---up to global sign---$\hat{u}$ coincides with the unique viscosity solution of the eikonal equation which is given by the distance function~\eqref{eq:distance_fct}.
\end{proof}

Hence, we have characterized the distance function to the boundary of a set in $\R^n$---whose properties are well-known and have been investigated for decades already---as solution to an nonlinear eigenvalue problem associated to the nonlinear and multi-valued operator $\partial\calJ$. As already mentioned in the introduction, it is important to notice the difference between our model and infinity Laplacian ground states (cf.~\cite{juutinen1999eigenvalue,barron2008infinity} for an overview), which are defined as positive viscosity solutions to 
\begin{align}
    \min\left\lbrace|\nabla u|-\Lambda_\infty u,-\Delta_\infty u\right\rbrace=0,
\end{align}
where $\Delta_\infty$ denotes the infinity Laplacian. Here, the eigenvalue $\Lambda_\infty$ is given by
\begin{align}\label{eq:infinity_princ_ev}
    \Lambda_\infty:=\min_{u\in W^{1,\infty}_0(\Omega)}\frac{\norm{\nabla u}_\infty}{\norm{u}_\infty}=\frac{1}{\max_{x\in\Omega}\dist(x,\partial\Omega)}
\end{align}
and every infinity ground state realizes the minimum. However, also the distance function is a minimizer but \emph{no} infinity ground state, in general \cite{juutinen1999infinity}, which means that there are minimizers of \eqref{eq:p_ground_state} for $p=\infty$ which are no multiple of the distance function.

\subsection{Subdifferential}
\label{sec:subdifferential}
In the following we would like to characterize the $L^2$-subdifferential of functional $\calJ$,
which is given by
\begin{align}\label{eq:subdifferential}
    \partial\calJ(u)=\left\lbrace\sg\in L^2(\Omega)\st \langle\sg,v\rangle\leq\calJ(v),\;\forall v\in L^2(\Omega),\;\langle\sg,u\rangle=\calJ(u)\right\rbrace,\quad u\in L^2(\Omega),
\end{align}
since $\calJ$ is absolutely one-homogeneous (cf.~\cite{benning2013ground,burger2016spectral, bungert2019asymptotic,bungert2019nonlinear}, for instance). 
Note that the $L^2$-subdifferential of the functionals
\begin{align}
    \calJ_p(u)=\norm{\nabla u}_p,\quad 1<p<\infty,
\end{align}
is single-valued for $u\in W^{1,p}_0(\Omega)\setminus\{0\}$ and given by
\begin{align}\label{eq:p-laplacians}
    \partial\calJ_p(u)=-\calJ_p(u)^{1-p}\Delta_pu,
\end{align}
where $\Delta_pu:=\div(|\nabla u|^{p-2}\nabla u)$ denotes the $p$-Laplacian. Hence, one could think that by sending $p\to\infty$ one obtains an expression for the subdifferential of $\calJ$ which involves the $\infty$-Laplacian. This, however, turns out not to be the case since the competing limits in~\eqref{eq:p-laplacians} lead to a loss of regularity, as we will see below.

To formulate the subdifferential we define the space
\begin{align}
H(\div;\Omega)&:=\left\lbrace q\in L^2(\Omega)\st\div q\in L^2(\Omega)\right\rbrace
\end{align}
of all $L^2$-vector-fields whose distributional divergence is square-integrable.
The space $H(\div;\Omega)$ is a Hilbert space when equipped with the inner product
\begin{align}
    \langle q,r\rangle_{H(\div;\Omega)}=\int_\Omega \left[q\cdot r+(\div q)(\div r)\right]\dx.
\end{align}
\begin{rem}
It is well-known that vector fields in $H(\div;\Omega)$ posses a normal trace and furthermore the space $C^\infty(\overline{\Omega},\R^n)$ of smooth vector fields is dense in $H(\div;\Omega)$, see for instance \cite[Ch.~1]{girault2012finite}.
\end{rem}
Using that $W^{1,\infty}_0(\Omega)\subset H^1_0(\Omega)$ one obtains the following integration by parts formula, which we will use throughout this work without further references.
\begin{prop}[Integration by parts]\label{prop:integration_by_parts}
Let $q\in H(\div;\Omega)$ and $u\in W^{1,\infty}_0(\Omega)$.
Then it holds
\begin{align}
    \int_\Omega -(\div q) u\dx = \int_\Omega q\cdot\nabla u\dx.
\end{align}
\end{prop}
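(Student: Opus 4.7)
The plan is to reduce the identity to the smooth case by density and then pass to the limit in $L^2$. The key observations are that $H(\div;\Omega)$ admits smooth vector fields as a dense subspace (as noted in the remark), and that $u\in W^{1,\infty}_0(\Omega)$ embeds continuously into $H^1_0(\Omega)$ (since $\Omega$ is bounded), so in particular $u$ has zero trace on $\partial\Omega$ and $\nabla u\in L^2(\Omega)^n$.

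First I would pick a sequence $q_k\in C^\infty(\overline{\Omega},\R^n)$ with $q_k\to q$ in $H(\div;\Omega)$, i.e. $q_k\to q$ in $L^2(\Omega)^n$ and $\div q_k\to\div q$ in $L^2(\Omega)$. For each fixed $k$, the classical Gauss--Green formula applied to $u\,q_k$ gives
\begin{equation*}
\int_\Omega \div(u\, q_k)\dx = \int_{\partial\Omega} u\,(q_k\cdot\nu)\dH = 0,
\end{equation*}
where the boundary integral vanishes because $u\in W^{1,\infty}_0(\Omega)$ has zero trace. Expanding the divergence via the product rule yields
\begin{equation*}
\int_\Omega -(\div q_k)\,u\dx = \int_\Omega q_k\cdot\nabla u\dx.
\end{equation*}

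Second, I would pass to the limit $k\to\infty$. The right-hand side converges to $\int_\Omega q\cdot\nabla u\dx$ by Cauchy--Schwarz, using $q_k\to q$ in $L^2(\Omega)^n$ and $\nabla u\in L^2(\Omega)^n$. The left-hand side converges to $\int_\Omega -(\div q)\,u\dx$ by Cauchy--Schwarz, using $\div q_k\to\div q$ in $L^2(\Omega)$ and $u\in L^2(\Omega)$. This yields the claimed identity.

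There is essentially no obstacle beyond correctly invoking the density statement and the fact that $u$ has vanishing trace; the only minor subtlety is that one should justify the product-rule computation and the classical divergence theorem for $u\in W^{1,\infty}_0(\Omega)$ paired with smooth $q_k$, which is standard since $u$ is Lipschitz and $u\,q_k\in W^{1,\infty}_0(\Omega)^n$.
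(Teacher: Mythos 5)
Your argument is correct in spirit but takes a different route from what the paper's lead-in suggests, and as written it has a small gap. The sentence preceding the proposition---that $W^{1,\infty}_0(\Omega)\subset H^1_0(\Omega)$---is a cue to approximate $u$, not $q$: pick $u_j\in C^\infty_c(\Omega)$ with $u_j\to u$ in $H^1(\Omega)$; for such $u_j$ the identity $\int_\Omega -(\div q)\,u_j\dx=\int_\Omega q\cdot\nabla u_j\dx$ is nothing but the definition of the distributional divergence of $q$, with no boundary term and no regularity assumption on $\partial\Omega$, and the limit then passes by Cauchy--Schwarz using $\div q\in L^2$ and $q\in L^2$. You instead approximate $q$ by $C^\infty(\overline{\Omega},\R^n)$ and invoke the classical Gauss--Green formula for $u\,q_k$ with a surface integral over $\partial\Omega$. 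The problem is that $\Omega$ is only assumed open and bounded, so in general there is no outer normal $\nu$ and no $(n-1)$-dimensional surface measure on $\partial\Omega$: the boundary integral you write down need not be well-defined, regardless of the fact that $u$ vanishes there.

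You do have the right fix in hand. By Remark~\ref{rem:lipschitz}, $u$ extends by zero to a function in $W^{1,\infty}(\R^n)$, and since $q_k$ is bounded on $\overline{\Omega}$ and $u$ vanishes on $\partial\Omega$, the product $u\,q_k$ extends by zero to a Lipschitz, compactly supported vector field on $\R^n$. Consequently $\int_\Omega\div(u\,q_k)\dx=\int_{\R^n}\div(u\,q_k)\dx=0$ by the one-dimensional fundamental theorem of calculus applied coordinate-wise, with no reference to $\partial\Omega$ at all. With that substitution in place of the Gauss--Green step, your proof is complete and the two routes are of comparable length; the paper's implicit route of approximating $u$ is marginally cleaner because it avoids the boundary issue entirely.
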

The following closed subspace of $H(\div;\Omega)$---which consists of all gradient fields with $L^2$-divergence---will be of great importance:
\begin{align}
    G_0^1(\Omega):=\{\nabla\varphi\st \varphi\in H^1_0(\Omega),\;\Delta\varphi\in L^2(\Omega)\}.
\end{align}
For details on this space, such as Helmholtz-decompositions, we refer to \cite{auchmuty2006divergence}.
Finally, we also introduce the space of vector valued Radon measures $\calM(\Omega,\R^n)$, equipped with the total variation norm $\norm{\mu}_{\calM(\Omega,\R^n)}:=|\mu|(\Omega)$, and the closed subspace
\begin{align}
    \calN(\div;\Omega):=\{r\in\calM(\Omega,\R^n)\st\div r=0\}
\end{align}
of solenoidal measures. 
The divergence is understood in the distributional sense, meaning that 
\begin{align}
    \int_\Omega\nabla\varphi\cdot\d r=0,\quad\forall r\in\calN(\div;\Omega),\;\varphi\in C^\infty_c(\Omega).
\end{align}
In order to characterize the subdifferential of $\calJ$, it is useful to express the functional by duality as
\begin{align}\label{eq:fctl}
\calJ(u)=\sup\left\lbrace\int_\Omega-(\div q)\,u\dx\st q\in C^\infty(\overline{\Omega},\R^n),\,\norm{q}_1\leq 1\right\rbrace.
\end{align}

Using this representation we obtain an integral characterization of the subdifferential $\partial\calJ$ as divergences of sums of regular functions and divergence-free measures.
The proof is similar to the characterization of the subdifferential of the total variation in \cite{bredies2016pointwise} and can be found in the appendix.
\begin{prop}[Integral characterization of the subdifferential]\label{prop:characterization_integral}
For $u\in L^2(\Omega)$ it holds
\begin{align}\label{eq:subdifferential_fctl}
\partial\calJ(u)=\left\lbrace-\div q\st q=g+r,\,g\in G^1_0(\Omega),\,r\in\calN(\div;\Omega),\,\int_\Omega-(\div q)\,u\dx=\calJ(u),\,\abs{q}(\Omega)\leq 1\right\rbrace.
\end{align} 
\end{prop}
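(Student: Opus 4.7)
The plan is to reduce to the case $u\in W^{1,\infty}_0(\Omega)$ (otherwise both sides are empty) and to use that, since $\calJ$ is absolutely one-homogeneous, $\sg\in\partial\calJ(u)$ is equivalent to $\sg\in\partial\calJ(0)$ together with $\langle\sg,u\rangle=\calJ(u)$. The latter pairing-equality transfers verbatim into the condition $\int_\Omega-(\div q)\,u\dx=\calJ(u)$ on the right-hand side, so it suffices to identify $\partial\calJ(0)$ as the set of $-\div q$ with $q=g+r$, $g\in G^1_0(\Omega)$, $r\in\calN(\div;\Omega)$, and $\abs{q}(\Omega)\leq 1$.

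For the inclusion ``$\supseteq$'' I would start from such a $q=g+r$ with $g=\nabla\varphi$, $\varphi\in H^1_0(\Omega)$, $\Delta\varphi\in L^2(\Omega)$. Since $\div r=0$ distributionally, $-\div q=-\Delta\varphi\in L^2(\Omega)$. For $v\in W^{1,\infty}_0(\Omega)$ I would combine Proposition~\ref{prop:integration_by_parts} applied to $g$ with an approximation of $v$ by $C^\infty_c(\Omega)$ functions (obtained by mollifying its zero extension and cutting off near the boundary, exploiting that $v$ has zero trace so the cut-off error is controlled). Dominated convergence with $\abs{r}(\Omega)<\infty$ and $\nabla v\in L^\infty(\Omega)$ extends $\int\nabla\psi\cdot\d r=0$ from $\psi\in C^\infty_c(\Omega)$ to $\int\nabla v\cdot\d r=0$, yielding
\begin{align*}
\int_\Omega-(\div q)\,v\dx=\int_\Omega\nabla v\cdot\d q\leq\norm{\nabla v}_\infty\abs{q}(\Omega)\leq\calJ(v).
\end{align*}
For $v\in L^2(\Omega)\setminus W^{1,\infty}_0(\Omega)$ the inequality is trivial since $\calJ(v)=+\infty$.

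For the converse inclusion ``$\subseteq$'', take $\sg\in\partial\calJ(u)\subset L^2(\Omega)$. Lax--Milgram applied to the Dirichlet problem $-\Delta\varphi=\sg$ produces a unique $\varphi\in H^1_0(\Omega)$ with $g:=\nabla\varphi\in G^1_0(\Omega)$ and $-\div g=\sg$. The solenoidal correction is constructed by solving the Beckmann-type minimum-flux problem
\begin{align*}
\min\left\{\abs{q}(\Omega)\st q\in\calM(\Omega,\R^n),\;-\div q=\sg\text{ in }\calD'(\Omega)\right\}.
\end{align*}
Fenchel--Rockafellar duality, applied in the dual pair $C_0(\Omega,\R^n)/\calM(\Omega,\R^n)$ with the total-variation norm and the indicator of the divergence constraint as the two convex functionals, identifies the value with $\sup\{\langle\sg,v\rangle\st v\in W^{1,\infty}_0(\Omega),\,\norm{\nabla v}_\infty\leq 1\}$, which is $\leq 1$ since $\sg\in\partial\calJ(0)$. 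Weak-$*$ compactness in $\calM(\Omega,\R^n)$ then provides a minimizer $q$ with $\abs{q}(\Omega)\leq 1$, and setting $r:=q-g$ yields $\div r=-\sg+\sg=0$, i.e., $r\in\calN(\div;\Omega)$. Finally $\int_\Omega-(\div q)\,u\dx=\langle\sg,u\rangle=\calJ(u)$ by the subdifferential hypothesis.

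The main obstacle is justifying strong duality for the Beckmann problem with the ``free-boundary'' convention (no normal-trace constraint on $q$, dual to test functions vanishing on $\partial\Omega$) in the correct functional-analytic setting; the strategy is to verify the Attouch--Br\'ezis qualification condition for Fenchel--Rockafellar on $C_0(\Omega,\R^n)/\calM(\Omega,\R^n)$ after regularizing. A related technical point, already present in the easy direction, is the approximation used to extend the distributional definition $\div r=0$ from $C^\infty_c(\Omega)$ to functions in $W^{1,\infty}_0(\Omega)$, since $W^{1,\infty}_0$ is generally not the $W^{1,\infty}$-closure of $C^\infty_c$.
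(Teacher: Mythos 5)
Your proposal takes a genuinely different route from the paper for the converse inclusion, but there are gaps in both directions.

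In the easy direction ($\supseteq$), the claim that dominated convergence extends $\int\nabla\psi\cdot\d r=0$ from $\psi\in C^\infty_c(\Omega)$ to $v\in W^{1,\infty}_0(\Omega)$ does not hold as stated: $r$ may carry a nontrivial singular part, against which $\nabla v\in L^\infty(\Omega;\dx)$ is not even pointwise defined, and mollified gradients $\nabla v_n$ need not converge $|r|$-a.e. The fix (used in the paper) is to never pair the measure $q$ against $\nabla v$ itself: one writes, for each $n$,
\begin{align*}
\int_\Omega-(\div q)\,v_n\dx=\int_\Omega\nabla v_n\cdot\d q\leq|q|(\Omega)\,\norm{\nabla v_n}_\infty\leq\calJ(v),
\end{align*}
and passes to the limit on the left-hand side using $\div q\in L^2(\Omega)$ and $v_n\to v$ in $L^2(\Omega)$; the approximation lemma guaranteeing $\norm{\nabla v_n}_\infty\leq\norm{\nabla v}_\infty$ (Lemma~\ref{lem:approximation}, proved by truncation near the boundary followed by mollification) is essentially what you outline.

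In the hard direction ($\subseteq$), the Lax--Milgram construction of $g\in G^1_0(\Omega)$ with $-\div g=\sg$ is correct, but the Beckmann-problem-plus-Fenchel--Rockafellar argument for producing $q$ with $|q|(\Omega)\leq 1$ and $-\div q=\sg$ is incomplete: the strong duality you invoke is precisely the nontrivial step, and the constraint qualification in the pairing $C_0(\Omega,\R^n)/\calM(\Omega,\R^n)$ with a free divergence constraint is not immediate (you acknowledge this). The paper avoids any duality machinery: it writes $\calJ=\chi_C^*$ with $C=\{-\div q\st q\in C^\infty(\overline\Omega,\R^n),\,\norm{q}_1\leq 1\}$, so that $\sg\in\partial\calJ(0)$ iff $\sg\in\overline C$ (closure in $L^2$), and then proves $\overline C\subset K$ by showing $K$ is closed in $L^2(\Omega)$, using weak-$*$ compactness in $\calM(\Omega,\R^n)$, a uniform $L^2$-bound on the gradient parts $g_n$ from \cite{auchmuty2006divergence}, and closedness of $G^1_0(\Omega)$ and $\calN(\div;\Omega)$. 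You could also bypass the Beckmann duality entirely: once $\sg\in\overline C$, a Banach--Alaoglu extraction from an approximating sequence $(q_n)\subset C^\infty(\overline\Omega,\R^n)$ with $\norm{q_n}_1\leq 1$ and $-\div q_n\to\sg$ in $L^2(\Omega)$ gives a weak-$*$ limit $q\in\calM(\Omega,\R^n)$ with $|q|(\Omega)\leq 1$ and $-\div q=\sg$ in $\calD'(\Omega)$, after which your Lax--Milgram step supplies the decomposition $q=g+r$. Replacing the Fenchel--Rockafellar step by this compactness argument would close the gap.
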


\begin{definition}[Calibrations]
Any measure $q\in\calM(\Omega,\R^n)$ such that $-\div q\in\partial\calJ(u)$ is called calibration of $u$.
\end{definition}
\begin{rem}[One space dimension]\label{rem:1D_calibrations}
If $\Omega\subset\R$ is an open interval then $\calN(\div;\Omega)$ coincides with constant functions. 
Hence, in this case calibrations $q$ such that $-\div q=-q'\in\partial\calJ(u)$ are always $H(\div)$-functions since the measure part is just a constant. 
\end{rem}
Having the integral characterization from Proposition~\ref{prop:characterization_integral} at hand, we are now interested in explicit forms of calibrations $q$ such that $-\div q\in\partial\calJ(u)$. 
In the following we fix $0\neq u\in W^{1,\infty}_0(\Omega)$ and use the short-cut notation
\begin{align}
    L:=\calJ(u)<\infty.
\end{align}
Furthermore, we define the subset of $\Omega$ where $\nabla u$ attains its maximal modulus as
\begin{align}\label{eq:Omega_max}
\Omega_{\max}:=\left\lbrace x\in\Omega\st |\nabla u(x)|=L\right\rbrace,
\end{align}
a set being defined up to a Lebesgue null-set. 
If we assume for a moment that the calibration $q$ is in $H(\div;\Omega)$, then integrating by parts in \eqref{eq:subdifferential_fctl} according to Proposition~\ref{prop:integration_by_parts} yields
\begin{align}\label{eq:pair_q_gradu}
\calJ(u)=\int_\Omega q\cdot\nabla u\dx,
\end{align}
which suggests that a possible calibration is given by
\begin{align}\label{eq:false_subgradient}
    q(x):=\begin{cases}
\frac{\nabla u(x)}{L}\frac{1}{|\Omega_{\max}|},\quad&x\in\Omega_{\max},\\
0,\quad&\text{else.}
\end{cases}
\end{align}
However, is is obvious from such a choice of $q$ that $\div q\notin L^2(\Omega)$, in general. As already mentioned, an alternative attempt to characterize the subdifferential of $\calJ$ could be to send $p$ to infinity in \eqref{eq:p-laplacians}. However, it is straightforward to see that one formally gets
$$\calJ_p(u)^{1-p}|\nabla u|^{p-2}\nabla u\to q,\quad p\to\infty,$$
where $q$ is again given by \eqref{eq:false_subgradient}. Hence, also this approach fails to describe the subdifferential of $\calJ$. 
Another difficulty comes through the set $\Omega_{\max}$, given by \eqref{eq:Omega_max}, which cannot be expected to have any regularity, as the following example shows.

\begin{example}[Structure of $\Omega_{\max}$]\label{ex:cantor}
In this example we would like to highlight that the structure of the set $\Omega_{\max}$ defined in \eqref{eq:Omega_max} can be highly degenerate. To this end let $\Omega=(0,1)$ and $F\subset\Omega$ be the middle-fourth fat Smith-Volterra-Cantor set which is a closed set with empty interior and positive measure $|F|=1/2$. Furthermore, we set $u(x)=\dist(x,F)$. Then it is straightforward that $\Omega_{\max}=\Omega\setminus F$ is an open set and $\overline{\Omega}_{\max}=\Omega$. In particular, the topological boundary $\partial\Omega_{\max}$ coincides with $F$ and has positive Lebesgue measure. Nevertheless, $u$ has non-empty subdifferential, as we will see.
\end{example}
From \eqref{eq:pair_q_gradu} we can derive yet another regular calibration, given by
\begin{align}\label{eq:regular_calibrations}
    q(x)=f(x)\nabla u(x),
\end{align}
where $f(x)\geq 0$, $\supp(f)\subset\overline{\Omega}_{\max}$ and $\norm{f}_1=1/L$.
Expanding $\div q$ yields
\begin{align}
    \div q=\nabla f\cdot\nabla u+f\Delta u,
\end{align}
where $\Delta u$ denotes the distributional Laplacian of $u$.
Hence in order to satisfy $\div q\in L^2(\Omega)$, function $f$ has to be $H^1(\Omega)$ and meet $f=0$ where $\Delta u$ is singular.
The following examples illustrate that this can be achieved very frequently.
\begin{example}[Measure Laplacians]\label{ex:measure-laplacian}
Let us assume that $u\in W^{1,\infty}_0(\Omega)$ is such that $\Delta u$ is represented by a finite Radon measure. 
In this case it holds that $|\Delta u|\ll\H^{n-1}$ according to \cite[Lem.~2.25]{chen2009gauss}.
Since $f\in H^1(\Omega)$ can be defined in the sense of traces on $n-1$-dimensional sets, one can find a calibration of the form $q=f\nabla u$ where $f$ vanishes on the support of $\Delta u$.
\end{example}
\begin{example}[$\Omega_{\max}$ with non-empty interior]\label{ex:non-empty-int}
Let $u\in W^{1,\infty}_0(\Omega)$ such that $\Omega_{\max}$ has non-empty interior. 
Then one can easily find a smooth non-negative function $f$ supported on some subset of $\Omega_{\max}$ with integral $1/L$. 
In particular, $q=f\nabla u$ will be a calibration.
\end{example}
An important property of calibrations of the form \eqref{eq:regular_calibrations} with a suitable function $f$ is that $q$ is not a measure but a $H(\div)$-function in this case.
In fact, being such a regular of calibrations is equivalent to having the form \eqref{eq:regular_calibrations} as the following proposition shows.
\begin{prop}[Pointwise characterization of regular calibrations]\label{prop:characterization}
Let $0\neq u\in\dom(\calJ)$ and $q\in H(\div;\Omega)$ with $\norm{q}_1=1$. It holds that $-\div q\in\partial\calJ(u)$ if and only if $q=0$ almost everywhere in $\Omega\setminus{\Omega}_{\max}$, and $q\cdot\nabla u=|q||\nabla u|$ almost everywhere in $\Omega$.
\end{prop}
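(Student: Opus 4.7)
The plan is to reduce the condition $-\div q\in\partial\calJ(u)$ to an integral identity via integration by parts and then exploit two pointwise Cauchy-Schwarz-type inequalities.

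First I would unpack the subdifferential condition directly from its definition \eqref{eq:subdifferential} rather than via Proposition~\ref{prop:characterization_integral}. Since $q\in H(\div;\Omega)$ and every $v\in W^{1,\infty}_0(\Omega)$ admits integration by parts (Proposition~\ref{prop:integration_by_parts}),
\begin{align*}
    \langle-\div q,v\rangle=\int_\Omega q\cdot\nabla v\dx\leq\int_\Omega|q||\nabla v|\dx\leq\norm{q}_1\norm{\nabla v}_\infty=\calJ(v),
\end{align*}
using $\norm{q}_1=1$. For $v\in L^2(\Omega)\setminus W^{1,\infty}_0(\Omega)$ we have $\calJ(v)=\infty$ and the inequality is trivial. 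Hence the subgradient inequality in \eqref{eq:subdifferential} holds automatically, and $-\div q\in\partial\calJ(u)$ is equivalent to the single identity
\begin{align*}
    \int_\Omega q\cdot\nabla u\dx=\calJ(u)=L.
\end{align*}

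Next I would analyse this identity through the chain of pointwise bounds
\begin{align*}
    q(x)\cdot\nabla u(x)\;\leq\;|q(x)||\nabla u(x)|\;\leq\;L\,|q(x)|\quad\text{a.e.\ }x\in\Omega,
\end{align*}
where the second inequality uses that $|\nabla u|\leq L$ almost everywhere by definition of $L=\norm{\nabla u}_\infty$. Integrating yields $\int_\Omega q\cdot\nabla u\dx\leq L\norm{q}_1=L$, with equality iff equality holds almost everywhere in \emph{both} pointwise inequalities. This immediately gives the forward direction: $-\div q\in\partial\calJ(u)$ forces $q\cdot\nabla u=|q||\nabla u|$ a.e., and $|q|\,(L-|\nabla u|)=0$ a.e., the latter meaning that $|q|=0$ on the set $\{|\nabla u|<L\}=\Omega\setminus\Omega_{\max}$ up to a null-set.

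For the converse, assuming the two pointwise properties, one retraces the chain with equalities:
\begin{align*}
    \int_\Omega q\cdot\nabla u\dx=\int_\Omega|q||\nabla u|\dx=\int_{\Omega_{\max}}L|q|\dx=L\norm{q}_1=L=\calJ(u),
\end{align*}
so the pairing condition is met and $-\div q\in\partial\calJ(u)$. I do not foresee any genuine obstacle: the argument is essentially a Cauchy–Schwarz equality analysis, and the only subtlety is the careful translation of $|q|(L-|\nabla u|)=0$ a.e.\ into the statement ``$q=0$ a.e.\ on $\Omega\setminus\Omega_{\max}$''. Notably the proof bypasses Proposition~\ref{prop:characterization_integral} entirely, which is convenient because a vector field in $H(\div;\Omega)$ need not a priori be decomposed as $g+r$ with $g\in G^1_0(\Omega)$ and $r\in\calN(\div;\Omega)$.
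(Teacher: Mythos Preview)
Your argument is correct and in fact slightly cleaner than the paper's. Both proofs hinge on the same integral identity $\int_\Omega q\cdot\nabla u\dx=L$, and the converse directions are essentially identical computations. The differences are twofold. First, for the forward direction the paper introduces $\varepsilon$-level sets such as $\Omega_\varepsilon=\{|\nabla u|\leq L-\varepsilon\}$ and $\Omega_\varepsilon=\{q\cdot\nabla u\leq(1-\varepsilon)|q||\nabla u|,\;|q||\nabla u|\geq\varepsilon\}$, shows each has measure zero, and passes to the limit via continuity of the Lebesgue measure; you instead observe directly that equality in the integrated chain $\int q\cdot\nabla u\leq\int|q||\nabla u|\leq L\int|q|$ forces the two non-negative integrands $|q||\nabla u|-q\cdot\nabla u$ and $|q|(L-|\nabla u|)$ to vanish almost everywhere, which is the same conclusion with less bookkeeping. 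Second, the paper appeals to the integral characterization \eqref{eq:subdifferential_fctl} of Proposition~\ref{prop:characterization_integral} to justify that the pairing identity suffices for $-\div q\in\partial\calJ(u)$, whereas you verify the subgradient inequality $\langle-\div q,v\rangle\leq\calJ(v)$ for all $v$ directly from $\norm{q}_1=1$ and integration by parts; this is self-contained and, as you note, sidesteps the question of whether an arbitrary $q\in H(\div;\Omega)$ admits the decomposition $g+r$ required in \eqref{eq:subdifferential_fctl}.
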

\begin{proof}
Let us show first that $-\div q\in\partial\calJ(u)$ for $q$ as above. Again we use the notation $\calJ(u)=L$. Using the assumptions we compute
\begin{align*}
L&\geq\int_\Omega q\cdot\nabla u\dx=\int_{\Omega}|q||\nabla u|\dx=\int_{{\Omega}_{\max}}|q||\nabla u|\dx\\
&=L\int_{{\Omega}_{\max}}|q|\dx=L.
\end{align*}
Hence, equality holds and we infer
$$\int_\Omega-\div q\,u\dx=\int_\Omega q\cdot\nabla u\dx=L,$$
which shows $-\div q\in\partial\calJ(u)$ according to \eqref{eq:subdifferential_fctl}.

Conversely, let us assume that we have $-\div q\in\partial\calJ(u)$. First, we show that $q=0$ holds a.e. in $\Omega\setminus\Omega_{\max}$. For any $\varepsilon>0$ we define the measurable set
$$\Omega_\varepsilon:=\{x\in\Omega\st|\nabla u(x)|\leq L-\varepsilon\}$$ 
and compute using \eqref{eq:pair_q_gradu}:
\begin{align*}
L&=\calJ(u)=\int_\Omega q\cdot\nabla u\dx=\int_{\Omega_\varepsilon}q\cdot\nabla u\dx+\int_{\Omega\setminus\Omega_\varepsilon}q\cdot\nabla u\dx\\
&\leq (L-\varepsilon)\int_{\Omega_\varepsilon}|q|\dx+L\int_{\Omega\setminus \Omega_\varepsilon}|q|\dx\\
&=L-\varepsilon\int_{\Omega_\varepsilon}|q|\dx.
\end{align*} 
This inequality implies that $q=0$ a.e. on $\Omega_\varepsilon$ and letting $\varepsilon\searrow 0$ we obtain from the continuity of the Lebesgue measure on nested sets that $q=0$ a.e. on $\Omega\setminus\Omega_{\max}$. 

Now we show that $q$ is parallel to $\nabla u$. To this end we re-define the set
$$\Omega_\varepsilon:=\left\lbrace x\in\Omega\st q(x)\cdot\nabla u(x)\leq(1-\varepsilon)|q(x)||\nabla u(x)|,\;|q(x)||\nabla u(x)|\geq\varepsilon\right\rbrace$$ 
for $\varepsilon>0$ and obtain with a similar computation as above that 
$$L\leq L-\varepsilon\int_{\Omega_\varepsilon}|q||\nabla u|\dx,$$
which implies
$$0=\int_{\Omega_\varepsilon}|q||\nabla u|\dx\geq|\Omega_\varepsilon|\eps.$$
This is only possible if $|\Omega_\varepsilon|=0$ and since the sets $\Omega_\varepsilon$ are also nested we again infer from the continuity of the Lebesgue measure that
\begin{align*}
   0&=\left|\bigcup_{\varepsilon>0}\Omega_\varepsilon\right|=\left|\left\lbrace x\in\Omega\st q(x)\cdot\nabla u(x)<|q(x)||\nabla u(x)|,\;|q(x)||\nabla u(x)|>0\right\rbrace\right|\\
   &=\left|\Omega\setminus\left\lbrace x\in\Omega\st q(x)\cdot\nabla u(x)=|q(x)||\nabla u(x)|\right\rbrace\right|,
\end{align*}
which shows that $q$ and $\nabla u$ are parallel a.e. in $\Omega$.
\end{proof}

\subsection{Eigenfunctions}
\label{sec:eigenfunctions}

In this section we would like to study  geometrical properties of eigenfunctions associated to functional $\calJ$, meaning functions $u\in W^{1,\infty}_0(\Omega)$ that meet
\begin{align}
    \lambda u\in\partial\calJ(u),
\end{align}
for some $\lambda>0$. In particular, we study their nodal set 
\begin{align}
N(u)=\{x\in\Omega\st u(x)=0\}
\end{align}
and the set $\Omega_{\max}$ as defined in \eqref{eq:Omega_max}. 
To this end, for the first two statements we assume the regularity condition that the eigenfunctions $u$ under consideration possess a $H(\div)$-calibration~$q$, i.e.
\begin{align}\label{eq:regular_ef_prob}
    \lambda u=-\div q,\quad q\in H(\div;\Omega),\;\norm{q}_1=1,
\end{align}
which makes Proposition~\ref{prop:characterization} applicable.
Remember that the existence of $H(\div)$-calibrations is ensured in many cases (cf.~Remark~\ref{rem:1D_calibrations}, Examples~\ref{ex:measure-laplacian},~\ref{ex:non-empty-int}).
Note that the nodal set $N(u)$ is closed due to continuity of~$u$. 
There are only a few results in the literature which deal with nodal sets of $p$-Laplacian-type eigenfunctions for $p\neq 2$.
In particular, it is not even known whether they have non-empty interior. 
Even if one assumes them to have empty interior, one can only prove lower bounds for their Hausdorff measure, meaning that nodal sets can in principle be very irregular, see \cite{weihnotes,kawohl2017geometry}. 
For the infinity-Laplacian there do not seem to be any results on the geometry of nodal sets. 
Also in our slightly different scenario \eqref{eq:regular_ef_prob}, where the operator is $\partial\calJ$, we cannot fully answer the question. 
However, we can show that $N(u)$ has zero Lebesgue measure if the eigenfunction is sufficiently regular. 
Furthermore, we prove that the interior of the nodal set coincides with the complement of $\overline{\Omega}_{\max}$, which informally means that at each point an eigenfunction is either zero or it has maximal gradient.

\begin{prop}\label{prop:efs_support}
Let $u$ meet \eqref{eq:regular_ef_prob}. Then it holds that 
\begin{align}\label{eq:interior_nodal_set}
    \Omega\setminus\overline{\Omega}_{\max}=\inn(N(u)).
\end{align}
Furthermore, the set $S:=\{x\in\Omega_{\max}\st q(x)=0\}$ has empty interior. 
\end{prop}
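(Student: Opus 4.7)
Both claims hinge on the pointwise characterization of regular calibrations from Proposition~\ref{prop:characterization}, together with one simple recurring observation: whenever $q$ vanishes on an open set $V\subseteq\Omega$, the equation $\lambda u=-\div q$ (with $\lambda>0$) forces $\div q=0$ distributionally on $V$, hence $u\equiv 0$ there, so $V\subseteq\inn(N(u))$ by continuity of $u$.

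For the inclusion $\Omega\setminus\overline{\Omega}_{\max}\subseteq\inn(N(u))$, I would apply Proposition~\ref{prop:characterization} to obtain $q=0$ a.e. on $\Omega\setminus\Omega_{\max}$, and in particular on the \emph{open} subset $V:=\Omega\setminus\overline{\Omega}_{\max}$, and then invoke the observation above. For the reverse inclusion, let $U:=\inn(N(u))$. Since $u\equiv 0$ on $U$, we have $\nabla u=0$ almost everywhere there. Because $u\not\equiv 0$ together with the Dirichlet condition forces $L=\calJ(u)>0$, this gives $|U\cap\Omega_{\max}|=0$. Using that $\Omega_{\max}$ is defined only up to a Lebesgue null set (as noted after \eqref{eq:Omega_max}), I would fix the representative with $\Omega_{\max}\cap U=\emptyset$; then $\Omega\setminus U$ is closed in $\Omega$ and contains $\Omega_{\max}$, hence contains $\overline{\Omega}_{\max}$, yielding $U\subseteq\Omega\setminus\overline{\Omega}_{\max}$.

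For the second claim, I would argue by contradiction: suppose $S$ has a non-empty open subset $V$. Then by definition $V\subseteq\Omega_{\max}$ and $q\equiv 0$ on $V$, so the recurring observation gives $V\subseteq\inn(N(u))$; combining with the identity \eqref{eq:interior_nodal_set} just established, $V\subseteq\Omega\setminus\overline{\Omega}_{\max}$, which contradicts $V\subseteq\Omega_{\max}\subseteq\overline{\Omega}_{\max}$. The main technical obstacle, illustrated by Example~\ref{ex:cantor}, is exactly that $\Omega_{\max}$ is a priori only defined up to a Lebesgue null set, so the set-theoretic equality \eqref{eq:interior_nodal_set} must be read as the existence of a convenient representative, and the representatives of $\Omega_{\max}$ and $q$ have to be chosen compatibly for both inclusions to hold simultaneously.
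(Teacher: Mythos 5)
Correct, and essentially the same approach as the paper's. The only differences are cosmetic streamlining: your distributional observation that $q=0$ a.e.\ on an open set forces $\div q=0$ and hence $u\equiv 0$ there replaces the paper's explicit integration by parts against $u$ over a small ball, your second claim is deduced as a corollary of the first identity rather than via a repeated computation, and you make explicit the representative-choice step for the reverse inclusion that the paper leaves implicit.
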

\begin{proof}
To avoid trivialities we assume $u\neq 0$ which means $\lambda>0$. 
We use the abbreviation $\Omega_0:=\Omega\setminus\overline{\Omega}_{\max}$. 
Since $\Omega_0$ is open, for any $x_0\in\Omega_{0}$ there is $r>0$ small enough such that $B_r(x_0)\subset\Omega_{0}$. Hence, it holds
\begin{align*}
    \lambda \int_{B_r(x_0)}u^2\dx=-\int_{B_r(x_0)}u\,\div q\dx=\int_{B_r(x_0)}q\cdot\nabla u\dx-\int_{\partial B_r(x_0)}u\,q\cdot\nu\d\H^{n-1}(x)=0,
\end{align*}
since $q=0$ a.e. in $\Omega\setminus\Omega_{\max}\supset\Omega_0$ according to Proposition~\ref{prop:characterization}. This implies $u=0$ on $B_r(x_0)$ and hence $B_r(x_0)\subset \inn(N(u))$. Since $x_0$ was arbitrary we obtain $\Omega_0\subset\inn(N(u))$. For the converse inclusion we take $x_0\in\inn(N(u))$ and $r>0$ such that $B_r(x_0)\subset\inn(N(u))$. Then it holds $u=0$ and $\nabla u=0$ on $B_r(x_0)$, which implies $\inn(N(u))\subset\inn(\Omega\setminus\Omega_{\max})=\Omega\setminus\overline{\Omega}_{\max}=\Omega_0$.

For the second claim, we assume that there is $x_0\in\Omega_{\max}$ and $r>0$ such that $B_r(x_0)\subset S$. Then $u$ cannot be constant on $B_r(x_0)$ since otherwise $|\nabla u|=0$ would hold on $B_r(x_0)$ which contradicts being a subset of $S$. Hence, using that $\int_{B_r(x_0)}u(x)^2\dx>0$ and doing precisely the same computation as above, we obtain a contradiction.
\end{proof}

Using this statement we can easily assert that the set $\Omega_{\max}$ has non-empty interior and hence cannot be too degenerate.

\begin{cor}
Let $u$ meet \eqref{eq:regular_ef_prob}. Then $\Omega_{\max}$ has non-empty interior.
\end{cor}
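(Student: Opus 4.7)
The strategy is to extract the non-empty interior directly from the first identity $\Omega\setminus\overline{\Omega}_{\max}=\inn(N(u))$ established in Proposition~\ref{prop:efs_support}; nothing deeper is needed. First I would observe that an eigenfunction satisfying \eqref{eq:regular_ef_prob} with $\lambda>0$ cannot vanish identically, so by continuity of $u$ the level set complement $\{u\neq 0\}=\Omega\setminus N(u)$ is open and non-empty in $\Omega$.

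Taking complements in Proposition~\ref{prop:efs_support} then gives $\overline{\Omega}_{\max}=\Omega\setminus\inn(N(u))\supset\Omega\setminus N(u)=\{u\neq 0\}$, so $\overline{\Omega}_{\max}$ contains the non-empty open set $\{u\neq 0\}$ and therefore has non-empty interior. One can phrase the same argument contrapositively: if $\overline{\Omega}_{\max}$ were nowhere dense, then $\inn(N(u))=\Omega\setminus\overline{\Omega}_{\max}$ would be open and dense in $\Omega$; since $N(u)$ is closed and contains this dense subset, one would get $N(u)=\Omega$ and hence $u\equiv 0$, contradicting $\lambda>0$.

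The second part of Proposition~\ref{prop:efs_support} (emptiness of $\inn(S)$) is not needed for this corollary; only the first identity matters. The only subtlety is notational rather than mathematical: since $\Omega_{\max}$ is defined only up to a Lebesgue null-set (cf.\ Example~\ref{ex:cantor}), the statement must be read either as ``$\overline{\Omega}_{\max}$ has non-empty interior'' or as ``there exists a representative of $\Omega_{\max}$ containing an open ball'', and the argument above delivers exactly the former. I do not foresee a genuine obstacle, since the claim is essentially a one-line consequence of the identity proved in the preceding proposition, combined with the elementary fact that $\{u\neq 0\}$ is open.
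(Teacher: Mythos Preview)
Your proposal is correct and follows essentially the same route as the paper: both arguments rest solely on the identity $\Omega\setminus\overline{\Omega}_{\max}=\inn(N(u))$ from Proposition~\ref{prop:efs_support} and the observation that a nontrivial eigenfunction gives $\{u\neq 0\}$ non-empty. The paper argues contrapositively (empty interior forces $u\equiv 0$), which is exactly the alternative phrasing you give; your direct version and your remark on the null-set ambiguity of $\Omega_{\max}$ are, if anything, slightly cleaner than the paper's formulation.
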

\begin{proof}
From Proposition~\ref{prop:efs_support} we know that $u=0$ on $\Omega\setminus\overline{\Omega}_{\max}$. If we assume that $\Omega_{\max}$ has empty interior, this implies that $\overline{\Omega}_{\max}=\Omega_{\max}$ and hence $u=0$ on $\Omega\setminus\Omega_{\max}$. Now $u$ is a continuous function which implies that $u=0$ on $\overline{\Omega\setminus\Omega_{\max}}=\Omega$, which is a contradiction. 
\end{proof}

\begin{prop}[Nodal set of eigenfunctions with regularity]
Let $u$ meet \eqref{eq:regular_ef_prob} and assume that $\{u\neq 0\}$ has a Lipschitz boundary. Then it holds $|N(u)|=0$.
\end{prop}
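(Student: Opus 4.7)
The plan is to argue by contradiction, supposing $|N(u)|>0$, and to exploit the Lipschitz regularity of $\partial\{u\neq 0\}$ together with the structural results from Propositions~\ref{prop:characterization} and \ref{prop:efs_support}. Since a Lipschitz $(n-1)$-dimensional set has $n$-dimensional Lebesgue measure zero, the assumption gives $|\partial\{u\neq 0\}|=0$, hence $|N(u)|=|\inn(N(u))|$. Proposition~\ref{prop:efs_support} identifies $\inn(N(u))=\Omega\setminus\overline{\Omega}_{\max}$, and Proposition~\ref{prop:characterization} guarantees that $q\equiv 0$ a.e.\ on this open set. Writing $V:=\{u\neq 0\}$, the task reduces to deriving a contradiction from the existence of a nonempty open subset $O=\inn(N(u))\subset N(u)$ on which both $u$ and $q$ vanish.

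The key intermediate step is to show that the $H(\div)$-normal trace of $q$ on the internal part $\partial V\cap\Omega$ vanishes. At any $x_0\in\partial V\cap\Omega$, the Lipschitz graph representation of $\partial V$ produces a one-sided open neighborhood on the $V^c$-side which, being open and contained in $V^c=N(u)$, lies in $\inn(N(u))\subset\{q=0\}$. The normal trace of $q$ on $\partial V$ from the $V^c$-side thus coincides with that of the zero vector field and vanishes. Because $\div q=-\lambda u\in L^2(\Omega)$ contains no singular part, no normal jump of $q$ may occur across the Lipschitz interface, so the normal trace from the $V$-side also vanishes.

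Now fix any connected component $W$ of $V$. The Lipschitz hypothesis (with $V$ on one side of the graph and $V^c$ on the other) forces components of $V$ to be separated, so $\partial W\subset\partial V$. Since $u$ is continuous and nonzero on the connected open set $W$, it keeps a fixed sign there; assume $u>0$ on $W$ (the other case is symmetric, or one argues on $|u|$). If $W$ is compactly contained in $\Omega$, Step~2 gives $q\cdot\nu=0$ on all of $\partial W$, and the divergence theorem applied on the Lipschitz domain $W$ yields
\begin{equation*}
\lambda\int_W u\dx=-\int_W\div q\dx=-\int_{\partial W}q\cdot\nu\d\H^{n-1}=0,
\end{equation*}
contradicting $|W|>0$ and $u>0$. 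Hence no connected component of $V$ is compactly contained in $\Omega$. An analogous argument applied to connected components of $O$ (whose boundaries likewise lie in $\partial V$, on whose opposite side $q=0$) rules out compactly contained components of $O$.

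The remaining case, where every component of both $V$ and $O$ extends to $\partial\Omega$, is the main obstacle. The expectation is that such interlaced configurations are incompatible with the eigenvalue equation, because the vanishing normal trace on the internal $\partial V\cap\Omega$ together with the structural constraint $q=f\nabla u$, $f\geq 0$, on $\Omega_{\max}$ (Proposition~\ref{prop:characterization}) over-determines the PDE $\div(f\nabla u)=-\lambda u$ in each component of $V$. In one dimension this over-determination is immediate: the boundary values $q=0$ at internal endpoints of $V$ combine with $q'=-\lambda u$ and the sign constraint on $q$ to force $u\equiv 0$ between successive critical points of $u$. In higher dimensions the same mechanism should apply via a careful flux-balance argument on subdomains of $W$ bounded by critical level sets of $u$, but making this rigorous is the technical heart of the proof.
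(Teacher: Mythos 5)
Your reduction to the case where $N(u)$ has nonempty interior, the identification $\inn(N(u))=\Omega\setminus\overline{\Omega}_{\max}$ via Proposition~\ref{prop:efs_support}, and the vanishing normal trace of $q$ on the internal part of $\partial\{u\neq 0\}$ (since $q\equiv 0$ on the open set $\inn(N(u))$ on one side and $\div q\in L^2$) are all sound and match the ingredients the paper uses. But your argument then goes global — integrating over a connected component $W$ of $\{u\neq 0\}$ — and, as you honestly flag, this strategy dies exactly when no component of $\{u\neq 0\}$ (or of $\inn(N(u))$) is compactly contained in $\Omega$. That is the generic situation in dimension $n\geq 2$, so the proof is not complete; the "remaining case" is not a technicality but the actual content. (Also, the claimed contradiction for compactly contained components of $O=\inn(N(u))$ is vacuous: both $u$ and $q$ vanish on $O$, so the divergence theorem there yields $0=0$.)

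The paper avoids the global topology entirely by arguing \emph{locally}. Fix $x_0\in\partial\{u>0\}\cap N(u)$ with $B_\eps(x_0)\cap\{u<0\}=\emptyset$ for small $\eps$, and integrate $\div q=-\lambda u$ over the truncated ball $B_\eps^+(x_0)=B_\eps(x_0)\cap\{u>0\}$. Its boundary splits into the internal piece $\partial\{u>0\}\cap B_\eps(x_0)$, where the normal trace of $q$ vanishes as you argued, and the spherical cap $\partial B_\eps(x_0)\cap\{u>0\}$. On the cap, Proposition~\ref{prop:characterization} gives $q=f\nabla u$ with $f\geq 0$, and for small $\eps$ the outer normal $\nu$ of the ball and $\nabla u$ point the same way, so $q\cdot\nu\geq 0$ there. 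Thus
\begin{equation*}
0<\int_{B^+_\eps(x_0)}\lambda u\dx=-\int_{\partial B_\eps(x_0)\cap\{u>0\}} q\cdot\nu\,\dH\leq 0,
\end{equation*}
a contradiction regardless of whether the ambient component of $\{u>0\}$ touches $\partial\Omega$. The sign information on $q\cdot\nu$ over the spherical cap is precisely what your global flux balance is missing: by testing only the total flux across $\partial W$, you discard the information that the flux is in fact pointwise signed on the part of the boundary that is \emph{not} adjacent to the nodal set. If you want to salvage your approach without going local, you would need a flux-sign argument on $\partial W\cap\partial\Omega$ or on interior critical level sets, but the ball-based computation above is both shorter and requires no additional topological hypotheses.
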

\begin{proof}
If the nodal set has empty interior it holds $N(u)=\partial\{u\neq 0\}$ which means that $|N(u)|=0$ since it coincides with a Lipschitz boundary. Hence we just have to deal with the case that $N(u)$ has non-empty interior. We write $\lambda u=-\div q$ with some calibration $q\in H(\div;\Omega)$. Without loss of generality, let us fix a point $x_0$ in $\partial\{u>0\}\cap N(u)$ and for $\eps>0$ we consider $B^+_\eps(x_0)=B_\eps(x_0)\cap\{u>0\}$. We choose $x_0$ and $\eps>0$ such that $B_\eps(x_0)\cap\{u<0\}=\emptyset$. This is possible due to the continuity of $u$. From the characterization of the subdifferential Proposition~\ref{prop:characterization} we know that $q=0$ a.e. in $N(u)$ and since $N(u)$ has non-empty interior, $q$ has vanishing normal trace on $\partial\{u>0\}\cap B_\eps(x_0)$. This implies
\begin{align*}
    0<\int_{B^+_\eps(x_0)}\lambda u\dx=-\int_{B_\eps^+(x_0)}\div q\dx
    =-\int_{\partial B_\eps(x_0)\cap\{u>0\}} q\cdot\nu \dx.
\end{align*}
Now since $q$ is parallel to $\nabla u$ for small enough $\eps>0$ it holds that $q\cdot\nu\geq 0$ which is a contradiction. Hence, $N(u)$ has zero Lebesgue measure.
\end{proof}

Next we show that every non-negative eigenfunction coincides with a ground state, i.e., is a multiple of the distance function to $\partial\Omega$.
Note that this result \emph{does not} require the regularity condition \eqref{eq:regular_ef_prob} but follows from a simple comparison argument.
\begin{prop}[Uniqueness of non-negative eigenfunction]\label{prop:positivity_efs}
Any non-negative eigenfunction $u\neq 0$ of $\partial\calJ$, meeting $\lambda u\in\partial\calJ(u)$, is a ground state. 
\end{prop}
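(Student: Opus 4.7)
The plan is to run a comparison argument against the distance function $d(x) = \dist(x,\partial\Omega)$, which Theorem~\ref{thm:ground_states_distance} has already identified as the (up to scaling) unique ground state. Setting $L := \calJ(u)$, which is strictly positive because $u\not\equiv 0$, the function $u$ is $L$-Lipschitz, non-negative, and vanishes on $\partial\Omega$, so the standard Lipschitz-extension argument for the distance function gives the pointwise upper bound $u(x)\leq L\,d(x)$ for every $x\in\Omega$. On the other hand, using the one-homogeneous form of the subdifferential in \eqref{eq:subdifferential}, the assumption $\lambda u\in\partial\calJ(u)$ normalizes via $\lambda\norm{u}_2^2 = \langle\lambda u,u\rangle = \calJ(u) = L$, so $\lambda = L/\norm{u}_2^2$, while the inequality part $\langle\lambda u,v\rangle\leq\calJ(v)$ is available for every $v\in L^2(\Omega)$.

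Testing the subdifferential inequality at the special choice $v=d$ yields $\lambda\int_\Omega u\,d\dx \leq \calJ(d) = 1$, and substituting $\lambda = L/\norm{u}_2^2$ rewrites this as $L\int_\Omega u\,d\dx \leq \norm{u}_2^2$. Multiplying the pointwise bound $u\leq L\,d$ by $u\geq 0$ and integrating gives the reverse inequality $\norm{u}_2^2\leq L\int_\Omega u\,d\dx$. Both must therefore hold as equalities, so
\[
\int_\Omega u\,(L\,d - u)\dx = 0
\]
with a non-negative integrand, forcing $u\,(L\,d - u) = 0$ almost everywhere in $\Omega$.

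It remains to upgrade this pointwise dichotomy $u\in\{0,L\,d\}$ to the identity $u\equiv L\,d$. The sets $\{u=0\}$ and $\{u=L\,d\}$ are closed in $\Omega$ by continuity of $u$ and $d$, disjoint because $d>0$ throughout $\Omega$, and together have full Lebesgue measure in $\Omega$; since their complement is an open set of measure zero, it must be empty, so these two closed sets partition the connected domain $\Omega$. By connectedness, one of them is empty, and the case $\{u=L\,d\}=\emptyset$ is excluded by the assumption $u\not\equiv 0$. Hence $u\equiv L\,d$ on $\Omega$, and Theorem~\ref{thm:ground_states_distance} identifies $u$ as a ground state.

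Most of the work is a direct bookkeeping exercise with the subdifferential inequality, the Lipschitz characterization of $d$, and the normalization $\lambda\norm{u}_2^2 = L$; the one genuinely substantive move is the chaining step that pins both $\norm{u}_2^2$ and $L\int u\,d\dx$ to the same value so that $u(L\,d-u)$ is forced to vanish. The final connectedness argument is the only topologically delicate point and is unavoidable: on a disconnected open set, the function that agrees with $L\,d$ on one component and vanishes on the rest would be a non-negative eigenfunction but not a ground state, so the hypothesis that $\Omega$ is a \emph{domain} is genuinely used.
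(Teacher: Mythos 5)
Your proof is correct and follows essentially the same strategy as the paper: compare $u$ with the distance function $d$, use the pointwise bound $u\leq L\,d$, test the subdifferential inequality with $v=d$, and deduce the pointwise dichotomy $u\in\{0,L\,d\}$. Where the paper derives the dichotomy via an auxiliary family of sets $\Omega_\varepsilon=\{d>u+\varepsilon,\,u>\varepsilon\}$ and a limit argument, you reach the same conclusion more directly by observing that $\int_\Omega u(Ld-u)\dx=0$ with non-negative integrand; and you make explicit the connectedness argument that the paper leaves implicit behind the phrase ``both $u$ and $d$ are continuous functions.'' These are genuine stylistic improvements, but the underlying mechanism is identical.
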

\begin{proof}
Let us assume that we have a non-negative eigenfunction $u\neq 0$ on $\Omega$ which is no ground state. We can normalize in such a way that $\calJ(u)=1$. Furthermore, we let $d$ denote the distance function which is the unique ground state with $\calJ(d)=1$ according to Theorem~\ref{thm:ground_states_distance}. Then from \cite{zagatti2014maximal} we know that $u\leq d$ holds pointwise almost everywhere in $\Omega$. 
Similar as before we define the set 
$$\Omega_\varepsilon:=\left\lbrace x\in\Omega\st d(x)>u(x)+\varepsilon,\; u(x)>\varepsilon\right\rbrace.$$
Since $u$ is an eigenfunction it holds $\lambda\langle u,v\rangle\leq\calJ(v)$ for all $v\in L^2(\Omega)$, where $\lambda=1/\norm{u}_2^2$. Testing this with $v=d$, using the definition of $\Omega_\varepsilon$ and the fact that $d\geq u$, we obtain
\begin{align*}
    \norm{u}_2^2&\geq\langle u,d\rangle\geq\int_{\Omega_\varepsilon}u(x)(u(x)+\varepsilon)\dx + \int_{\Omega\setminus \Omega_{\varepsilon}}u(x)d(x)\dx \\
    &\geq\int_\Omega u(x)^2\dx +\varepsilon\int_{\Omega_\varepsilon}u(x)\dx\\
    &\geq\norm{u}_2^2+\varepsilon^2|\Omega_\varepsilon|,
\end{align*}
which tells us that $|\Omega_\varepsilon|=0$. Letting $\varepsilon$ tend to zero we infer as before that almost everywhere in $\Omega$ it holds $u=d$ or $u=0$. Since, however both $u$ and $d$ are continuous functions and by assumption $u\neq 0$, we find that $u=d$ holds almost everywhere in $\Omega$. 
\end{proof}
Using this uniqueness of non-negative eigenfunctions together with the results in \cite{bungert2019asymptotic} we obtain the result that the gradient flow of $\calJ$ asymptotically converges to the distance function.
\begin{thm}[Asymptotic profiles]\label{thm:profiles_are_distance_fcts}
Let $u(t)$ be the solution of the gradient flow \eqref{gradflow} with respect to $\calJ$ and datum $f\geq 0$. Denote the finite extinction time of the flow by $T$. Then $u(t)/\norm{u(t)}_2$ converges strongly in $L^2(\Omega)$ to a multiple of the distance function as $t\nearrow T$.
\end{thm}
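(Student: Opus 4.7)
The plan is to combine three ingredients: the general convergence theorem for gradient flows of absolutely one-homogeneous functionals from \cite{bungert2019asymptotic}, preservation of non-negativity along the flow, and the uniqueness result from Proposition~\ref{prop:positivity_efs}.

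First I would verify that $\calJ$ fits the abstract setup of \cite{bungert2019asymptotic}: it is proper, convex, lower semicontinuous on $L^2(\Omega)$ and absolutely one-homogeneous. The Poincaré inequality applied on a bounded domain yields $\norm{u}_2\leq C\calJ(u)$ for all $u\in W^{1,\infty}_0(\Omega)$, which is the standard condition guaranteeing finite extinction time $T<\infty$ of the gradient flow. Under these hypotheses, the abstract theory furnishes an asymptotic profile: along $t\nearrow T$, the rescaled solutions $u(t)/\norm{u(t)}_2$ converge strongly in $L^2(\Omega)$ (up to a subsequence a priori) to a limit $w$ satisfying $\lambda w\in\partial\calJ(w)$ for some $\lambda>0$, i.e.\ $w$ is an eigenfunction of the subdifferential operator in the sense of \eqref{eq:asymptotic_profiles}.

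Second, I would show that the gradient flow preserves non-negativity when $f\geq 0$. The key observation is that the positive-part truncation is $1$-Lipschitz and preserves zero boundary values, so $u_+\in W^{1,\infty}_0(\Omega)$ whenever $u\in W^{1,\infty}_0(\Omega)$ and $\calJ(u_+)\leq\calJ(u)$. A standard argument (testing the subdifferential inclusion $-u'(t)\in\partial\calJ(u(t))$ against $-u_-(t)$ and using the truncation inequality above) then gives $\tfrac{d}{dt}\tfrac{1}{2}\norm{u_-(t)}_2^2\leq 0$. Since $u_-(0)=0$, we conclude $u(t)\geq 0$ for all $t\in[0,T]$, and consequently the asymptotic profile $w$ inherits non-negativity.

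Third, Proposition~\ref{prop:positivity_efs} forces any non-negative eigenfunction of $\partial\calJ$ to be a multiple of the distance function $d$. Combined with the normalization $\norm{w}_2=1$, this identifies the limit uniquely as $w=d/\norm{d}_2$, so the convergence holds along the full family $t\nearrow T$ rather than only subsequentially.

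The main obstacle I expect is the non-negativity preservation step: the inclusion $-u'(t)\in\partial\calJ(u(t))$ has to be tested against the (time-dependent) negative part, which requires some care because $u_-(t)$ needs to be an admissible competitor in the subdifferential inequality $\langle -u'(t),u_-(t)\rangle\leq\calJ(u(t))+\calJ(-u_-(t))-\calJ(u(t)+u_-(t))$ combined with the identity $u+u_- = u_+$. Once this is set up, everything else is bookkeeping using the cited results.
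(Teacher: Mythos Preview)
Your proposal is correct and follows essentially the same route as the paper: invoke the abstract asymptotic-profile theory from \cite{bungert2019asymptotic} to obtain (subsequential) convergence to an eigenfunction, establish that the limit is non-negative, and then apply Proposition~\ref{prop:positivity_efs} together with Theorem~\ref{thm:ground_states_distance} to identify it as a multiple of the distance function and upgrade to full convergence. The only cosmetic differences are that the paper invokes the compact embedding $W^{1,\infty}_0(\Omega)\hookrightarrow L^2(\Omega)$ (rather than your Poincar\'e-type coercivity) when citing \cite[Thm.~2.5]{bungert2019asymptotic}, and that it outsources the non-negativity of the limit to \cite[Thm.~2.6]{bungert2019asymptotic} instead of giving the direct truncation argument you sketch.
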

\begin{proof}
Since $\dom(\calJ)=W^{1,\infty}_0(\Omega)$ is compactly embedded in $L^2(\Omega)$ we infer from \cite[Thm.~2.5]{bungert2019asymptotic} that $u(t)/\norm{u(t)}_2$ has a subsequence which strongly converges to an eigenfunction. Now \cite[Thm.~2.6]{bungert2019asymptotic} implies that the whole sequence converges to a non-negative eigenfunction. From Proposition~\ref{prop:positivity_efs} and Theorem~\ref{thm:ground_states_distance} we conclude that this eigenfunction has to be a multiple of the distance function.
\end{proof}

\begin{example}[Distance function of the $n$-sphere]
In this example we study the distance function $d$ of the $n-1$-sphere $S_{n-1}:=\{x\in\R^n\st|x|=1\}$, where we choose $\Omega=B_1(0)$. We already know from Theorem~\ref{thm:ground_states_distance} that the distance function is an eigenfunction, i.e., $\lambda d=-\div q$ where $\lambda=\calJ(d)/\norm{d}_2^2=1/\norm{d}_2^2$ and $\norm{q}_1\leq 1$.
Furthermore, since $q$ is parallel to $\nabla u$, we can write $q$ as $q=f\nabla u$ with $f\geq 0$. 
In the following we would like to detail function~$f$. 
We claim that in spherical coordinates it holds
$$f(r)=\lambda\left(\frac{r}{n}-\frac{r^2}{n+1}\right).$$
The radial component of the gradient of $d(r)=1-r$ is given by $\nabla_r d=d'(r)=-1$ and there is no angular component. Hence, we obtain that the radial component of the calibration vector field $q=f\nabla d$ is given by $q_r(r)=\lambda_n\left(\frac{r^2}{n+1}-\frac{r}{n}\right)$ which implies
\begin{align*}
    -\div(f(r)\nabla d(r))&=-\frac{1}{r^{n-1}}\frac{\d}{\d r} (r^{n-1}q_r(r))\\
    &=\lambda\frac{1}{r^{n-1}}\frac{\d}{\d r}\left(\frac{r^n}{n}-\frac{r^{n+1}}{n+1}\right)\\
    &=\lambda(1-r)\\
    &=\lambda d(r).
\end{align*}
Furthermore, it is straightforward to check that $\norm{q}_1=1$.
Note that the qualitative behavior of $f$ changes with the dimension $n\in\N$. In particular, $f(r)$ attains its maximum for $r=\frac{n+1}{2n}$ which tends to $1/2$ as the dimension grows. Furthermore, $f$ has roots at $r=0$ and $r=\frac{n+1}{n}$ which tends to one from above. Furthermore, the value of $f(1)$ diverges. 
\end{example}

\begin{example}[A basis of 1D-eigenfunctions]\label{ex:1d_efs}
In this example we construct a set of 1D-eigenfunctions on the interval $\Omega=[-1,1]$ which constitutes a Riesz basis of $L^2(\Omega)$. They disintegrate into odd and even ones with respect to the center of the interval and can be constructed by simple gluing principles. We start with the odd ones which we denote by $(v_n)_{n\in\N}$. Let $\Omega=\bigcup_{k=1}^{2n}\Omega_k$ a decomposition of $\Omega$ into $2n$ intervals of length $1/n$ such that $\Omega_k\leq \Omega_{k+1}$ holds for all $k=1,\dots,2n-1$. Letting $d_k$ denote the distance function of $\Omega_k$ we set
$$
u_{n}\vert_{\Omega_k}(x)=(-1)^{k+1}d_k(x).
$$ 
Note that all functions $u_n$ satisfy $u_n(0)=0$ and $u(-x)=-u(x)$. Furthermore, it is worth noting that the functions $(u_n)$ form a orthogonal set. This follows directly from the fact that $u_n$ consists of equally many positive and negative distance functions. The eigenvalues of $u_n$ can be easily computed and are given by
$$R(u_n)=\frac{1}{\norm{u_n}_2}=\sqrt{\frac{3}{2}}2n.$$

The even eigenfunctions $(v_n)$ are generated similarly. Here we divide the interval $\Omega$ into $2n-1$ intervals $\Omega_k$ of length $2/(2n-1)$ such that $\Omega=\bigcup_{k=1}^{2n-1}\Omega_k$ and $\Omega_k\leq \Omega_{k+1}$ holds for all $k=1,\dots,2n-2$. Letting $d_k$ again denote the distance function of $\Omega_k$ we set
$$
v_{n}\vert_{\Omega_k}(x)=(-1)^{k+1}d_k(x).
$$
All functions $v_n$ satisfy $v_n(-x)=v_n(x)$ and, in particular, $v_1$ coincides with the distance function of $\Omega$ which is even and a ground state. Note that functions $(v_n)$ are \emph{not} mutually orthogonal. Their eigenvalues are given by
$$R(v_n)=\frac{1}{\norm{v_n}_2}=\sqrt{\frac{3}{2}}(2n-1).$$

Figure~\ref{fig:1d_efs} shows the first four eigenfunctions $\{v_1,u_1,v_2,u_2\}$ sorted by eigenvalue. Note that--up to the factor $\sqrt{3/2}$---the eigenvalues of $u_n$ and $v_n$ precisely count the numbers of peaks or oscillations.

The fact that $\{u_n,v_n\st n\in\N\}$ is a Riesz basis of $L^2(\Omega)$ was proven in \cite{binding2006basis}.
\end{example}

\begin{figure}[h!]
    \centering
    \begin{tikzpicture}
        \begin{axis}[
    axis lines = center,
    axis equal image,
    xtick = {-3,0,3},
    xticklabels = {$-1$,$0$,$1$},
    ytick = {0,3},
    yticklabels = {$0$,$1$}
]

\def\first(#1){and(#1 >= -3, #1 < 0) * (#1+3) + 
    and(#1 >= 0, #1 < 3) * (3-(#1))
    }
\addplot [
    domain=-3:3, 
    samples=101, 
    color=red,
]
{\first(x)};

\def\second(#1){and(#1 >= -3, #1 < -1.5) * (#1+3) + 
    and(#1 >= -1.5, #1 < 0) * (3-(#1+3)) +
    and(#1 >= 0, #1 < 1.5) * (3-(#1+3)) +
    and(#1 >= 1.5, #1 < 3) * ((#1+3)-6) 
    }
\addplot [
    domain=-3:3, 
    samples=101, 
    color=blue,
]
{\second(x)};

\def\third(#1){and(#1 >= -4, #1 < -2) * (#1+3) + 
    and(#1 >= -2, #1 < -1) * (2-(#1+3)) +
    and(#1 >= -1, #1 < 0) * (2-(#1+3)) +
    and(#1 >= 0, #1 < 1) * ((#1+3)-4) +
    and(#1 >= 1, #1 < 2) * ((#1+3)-4) +
    and(#1 >= 2, #1 < 3) * (6-(#1+3))
    }
\addplot [
    domain=-3:3, 
    samples=101, 
    color=orange,
]
{\third(x)};

\def\fourth(#1){and(#1 >= -3, #1 < -2.25) * (#1+3) + 
    and(#1 >= -2.25, #1 < -1.5) * (1.5-(#1+3)) +
    and(#1 >= -1.5, #1 < -.75) * (1.5-(#1+3)) +
    and(#1 >= -.75, #1 < 0) * ((#1+3)-3) +
    and(#1 >= 0, #1 < 0.75) * ((#1+3)-3) +
    and(#1 >= 0.75, #1 < 1.5) * (4.5-(#1+3)) +
    and(#1 >= 1.5, #1 < 2.25) * (4.5-(#1+3)) + 
    and(#1 >= 2.25, #1 < 3) * ((#1+3)-6)
    }
\addplot [
    domain=-3:3, 
    samples=100, 
    color=green,
]
{\fourth(x)};

 \end{axis}
     \end{tikzpicture}
     \caption{First four eigenfunctions with increasing number of oscillations}
     \label{fig:1d_efs}
\end{figure}

\section{Explicit solution of gradient flow and variational problem}
\label{sec:explicit_sol}
We already know from Theorem~\ref{thm:profiles_are_distance_fcts} that the solution of the gradient flow \eqref{gradflow} with respect to $\calJ$ asymptotically behaves like the distance function of the domain. In the following, we prove that for sufficiently regular domains and constant initialization, one can compute the solution of the gradient flow analytically. In addition, this solution also solves the variational regularization problem \eqref{varprob} associated to $\calJ$. Notably, this solution exhibits an interesting behavior of its level sets which reminds of the fast marching algorithm or other level set approaches (cf.~\cite{sethian1996fast,sussman1994level}). Before we construct these analytic solutions we start with some definitions regarding the kind of domains we consider.
 
\begin{definition}[Inner parallel body]
Let $\Omega\subset\R^n$ be an open set and let $d(x):=\dist(x,\partial\Omega)$ denote the distance function to $\partial\Omega$. Then
\begin{align}
    \Omega_\tau:=\{x\in\Omega\st d(x)\geq \tau\}
\end{align}
is called the inner parallel body of $\Omega$ with distance $\tau>0$. 
\end{definition}

\begin{definition}[Perimeter bound for inner parallel body]\label{def:perimeter_bound}
We say that $\Omega$ admits a perimeter bound for its inner parallel bodies if there is $\tilde{r}>0$ and $0<\tilde{\tau}\leq \tilde{r}$ such that
\begin{align}\label{ineq:perimeter_bound}
P(\Omega_\tau)\geq P(\Omega)\left(1-\frac{\tau}{\tilde{r}}\right)^{n-1},\quad\forall 0\leq\tau\leq\tilde{\tau}.
\end{align} 
\end{definition}

\begin{example}[Convex domains]\label{ex:perimeter_bound}
According to \cite{larson2016bound} convex domains $\Omega\subset\R^n$ always fulfill a perimeter bound like \eqref{ineq:perimeter_bound} with $\tilde{r}=\tilde{\tau}=r$ where $r=\max_{x\in\Omega}\dist(x,\partial\Omega)$ denotes the in-radius of $\Omega$. Furthermore, if $\Omega$ is homothetic to its \emph{form body} then \eqref{ineq:perimeter_bound} becomes an equality. This is the case, for instance, if $\Omega$ is a ball or a polytope whose faces are tangential to the largest ball which can be inscribed in $\Omega$.
\end{example}

\begin{example}[L-shaped domain]
Let us consider an L-shaped domain with equal width and height given by $L>0$ and thickness $\delta\in(0,L)$. For instance, one could set $\Omega:=[0,L]^2\setminus [0,L-\delta]^2\subset\R^2$. We are interested in whether $\Omega$ admits the perimeter bound \eqref{ineq:perimeter_bound}. To this end we notice that the perimeter of $\Omega$ is given by $P(\Omega)=4L$ and the perimeter of~$\Omega_\tau$ for $0\leq\tau\leq\min(L-\delta,\delta/2)$ can be computed as
\begin{align*}
P(\Omega_\tau)&=2(L-2\tau)+2(\delta-2\tau)+2(L-\delta-\tau)+\frac{1}{4}2\tau\pi\\
&=4L\left(1-\tau\frac{20-\pi}{8L}\right)\\
&=P(\Omega)\left(1-\frac{\tau}{\tilde{r}}\right),
\end{align*}
where $\tilde{r}=8L/(20-\pi)$. 
The number $\tilde{\tau}$ is given by $\tilde{\tau}=\min(L-\delta,\delta/2)$ and meets $\tilde{\tau}<\tilde{r}$.
Hence, the L-shape admits the perimeter bound~\eqref{ineq:perimeter_bound}.
\end{example}

Before we turn to the main theorem of this section, which constructs the explicit solution, we have to study the properties of a geometric integral which will appear in the proof. 
\begin{lemma}\label{lem:geometric_integral}
Let $\Omega\subset\R^n$ be a domain, $d(x):=\dist(x,\partial\Omega)$ denote the distance function to $\partial\Omega$, and $r:=\max_{x\in\Omega}d(x)$ the in-radius of $\Omega$.
Then for $k\in\N$, we define the function
\begin{align}\label{eq:function_I}
    I_k(g):=\int_{\Omega\setminus\Omega_{rg}}d(x)^k\dx,\quad 0\leq g\leq 1.
\end{align}
\begin{itemize}
    \item For all $k\in\N$ it holds that $I_k(0)=0$, $I_k$ is monotonously increasing and differentiable with 
    \begin{align}\label{eq:derivative_I}
         I_k'(g) &= P(\Omega_{rg})r^{k+1}g^k,\quad \forall 0<g<1.
    \end{align}
    \item If $\Omega$ admits the perimeter bound~\eqref{ineq:perimeter_bound} for its inner parallel body, then function $I_2$ admits the following estimate for all $0\leq g \leq \frac{\tilde{\tau}}{r}$
    \begin{align}
        I_2(g) &\geq \frac{\tilde{r}^3P(\Omega)}{n}\Bigg\lbrace\frac{2}{(n+1)(n+2)}\left[1-\left(1-\frac{rg}{\tilde{r}}\right)^{n+2}\right]-\frac{2}{n+1}\left(1-\frac{rg}{\tilde{r}}\right)^{n+1} \frac{rg}{\tilde{r}} \notag\\
        &\qquad\qquad\qquad-\left(\frac{rg}{\tilde{r}} \right)^2\left(1-\frac{rg}{\tilde{r}} \right)^n\Bigg\rbrace. \label{ineq:lower_bound_I}
    \end{align}
\end{itemize}
\end{lemma}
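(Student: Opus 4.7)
The plan is to express $I_k(g)$ as a one-dimensional integral via the coarea formula and then reduce both assertions to elementary manipulations. Since the distance function $d$ is $1$-Lipschitz with $|\nabla d|=1$ almost everywhere on $\Omega$, and since $\Omega\setminus\Omega_{rg}=\{x\in\Omega\st d(x)<rg\}$ up to a Lebesgue null set, the coarea formula applied to $d$ yields
\begin{align*}
    I_k(g)=\int_0^{rg}t^k\,\mathcal{H}^{n-1}(d^{-1}(t))\d t=\int_0^{rg}t^kP(\Omega_t)\d t,
\end{align*}
where in the second equality I use that for a.e.\ $t\in(0,r)$ the level set $d^{-1}(t)$ coincides up to $\mathcal{H}^{n-1}$-null sets with the reduced boundary of the sublevel set $\Omega_t$. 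From this representation, $I_k(0)=0$ and monotonicity in $g$ are immediate, and the derivative formula \eqref{eq:derivative_I} follows by the fundamental theorem of calculus together with the chain rule applied to the upper limit $rg$.

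For the second assertion, I insert the perimeter bound $P(\Omega_t)\geq P(\Omega)(1-t/\tilde r)^{n-1}$, which is in force on $[0,\tilde\tau]$, into the integral representation of $I_2$. After the change of variables $s=t/\tilde r$ and setting $a=rg/\tilde r\in[0,\tilde\tau/\tilde r]$, the estimate reduces to a lower bound of the form
\begin{align*}
    I_2(g)\geq P(\Omega)\tilde r^3\int_0^a s^2(1-s)^{n-1}\d s.
\end{align*}
The remaining one-dimensional integral I evaluate by two successive integrations by parts: first, using the antiderivative $-(1-s)^n/n$ of $(1-s)^{n-1}$ to peel off one power of $s$ (producing a boundary term in $a^2(1-a)^n$ and a new integral of $s(1-s)^n$), and then, using the antiderivative $-(1-s)^{n+1}/(n+1)$ of $(1-s)^n$ to peel off the second power (producing boundary term $a(1-a)^{n+1}$ and an integral of $(1-s)^{n+1}$, which integrates directly). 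Collecting the boundary and bulk contributions, dividing by the factor $n$ that emerges from the first integration by parts, and reverting to the variable $rg/\tilde r$ reproduces exactly the three summands on the right-hand side of~\eqref{ineq:lower_bound_I}.

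The only genuine technicality is the coarea step, specifically the identification of $\mathcal{H}^{n-1}(d^{-1}(t))$ with $P(\Omega_t)$ without additional regularity hypotheses on $\partial\Omega$. Since this identification is required only inside an integral over $t$, the almost-everywhere version provided by the structure theorem for sets of finite perimeter (applied to the sublevel sets of the Lipschitz function $d$) is enough. Beyond this, the proof is pure one-variable calculus, and the restriction $0\leq g\leq\tilde\tau/r$ is precisely what guarantees that the perimeter bound remains valid throughout the range of integration.
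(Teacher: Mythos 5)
Your argument is correct, and for the second bullet it is genuinely different from (and a bit cleaner than) what the paper does. You insert the perimeter bound directly into the coarea representation
\begin{align*}
I_2(g)=\int_0^{rg} t^2 P(\Omega_t)\,\d t \ \geq\ P(\Omega)\tilde r^3\int_0^{rg/\tilde r} s^2(1-s)^{n-1}\d s,
\end{align*}
and then integrate by parts twice; I checked that the two boundary terms $a^2(1-a)^n$ and $a(1-a)^{n+1}$ together with the bulk term reproduce \eqref{ineq:lower_bound_I} exactly after dividing by $n$. The paper instead applies the layer cake formula to $h_g=d^2\chi_{\Omega\setminus\Omega_{rg}}$, writes $I_2(g)=\int_0^{(rg)^2}|S_{\sqrt{t},rg}|\d t$, and only then invokes the coarea formula and the perimeter bound to estimate the strip measures $|S_{s,t}|$ before doing the remaining elementary integral. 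The two routes end at the same antiderivative; in fact the paper's layer-cake step amounts to doing your first integration by parts implicitly, since after the substitution $t=s^2$ their integral becomes $2\int_0^a s(1-s)^n\d s - a^2(1-a)^n$, which is exactly what you obtain after one integration by parts. Your route is more economical in that you only invoke coarea once (in the derivative formula) and then reduce everything to a single Beta-type integral. The technicality you flag, identifying $\mathcal{H}^{n-1}(d^{-1}(t))$ with $P(\Omega_t)$ for a.e.\ $t$, is also present (tacitly) in the paper's coarea step, and the a.e.\ identification via the structure of level sets of Lipschitz functions that you cite is the right justification. The first bullet is handled the same way in both: coarea plus the fundamental theorem of calculus, which is the same as the paper's finite-difference-plus-limit argument.
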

\begin{proof}
It is trivial that $I_k(0)=0$ and $I_k$ is monotonously increasing. 
For showing~\eqref{eq:derivative_I} we let $\tilde{g}<g$ and compute using the coarea formula
\begin{align*}
    I_k(g)-I_k(\tilde{g})=\int_{S_{r\tilde{g},rg}}d(x)^k\dx=\int_{r\tilde{g}}^{rg}P(\Omega_t)t^k\d t.
\end{align*}
Consequently, we obtain
\begin{align*}
    I_k'(g)=\lim_{\tilde{g}\to g}\frac{I_k({g})-I_k(\tilde{g})}{g-\tilde{g}}=r\lim_{\tilde{g}\to g}\frac{1}{rg-r\tilde{g}}\int_{r\tilde{g}}^{rg} P(\Omega_t) t^k\d t = r P(\Omega_{rg}) (rg)^k=P(\Omega_{rg})r^{k+1}g^k.
\end{align*}
To evaluate $I_2(g)$ we make use of the layer cake formula, which states that the integral of a non-negative function $h:\Omega\to\R$ can be computed as
\begin{align}\label{eq:layer_cake}
\int_\Omega h(x)\dx=\int_0^\infty|\{x\in\Omega\st h(x)>t\}|\d t.
\end{align}
Let us first estimate the Lebesgue measure of the strip $S_{s,t}:=\Omega_s\setminus\Omega_t$ where $s<t$. By using the coarea formula and the perimeter bound \eqref{ineq:perimeter_bound} it holds for $0\leq s\leq t<\tilde{\tau}$
\begin{align}\label{ineq:estimate_strip}
|S_{s,t}|=\int_s^t P(\Omega_\tau)\d\tau\geq P(\Omega)\int_s^t\left(1-\frac{\tau}{\tilde{r}}\right)^{n-1}\d\tau=\frac{\tilde{r}P(\Omega)}{n}\left[(1-s/\tilde{r})^n-(1-t/\tilde{r})^n\right].
\end{align}
Letting $h_g(x):= d(x)^2\chi_{\Omega\setminus\Omega_{rg}}$ for $0\leq g \leq \frac{\tilde{\tau}}{r}$ we infer from \eqref{eq:layer_cake} and \eqref{ineq:estimate_strip}
\begin{align*}
I_2(g)&=\int_\Omega h_g(x)\dx\\ &=\int_0^{(rg)^2}|\{x\in\Omega\st t<h_g(x)<(rg)^2\}|\d t\\
&=\int_0^{(rg)^2}|S_{\sqrt{t},rg}|\d t\\
&\geq \frac{\tilde{r}P(\Omega)}{n}\int_0^{(rg)^2}(1-\sqrt{t}/\tilde{r})^n-(1-rg/\tilde{r})^n \d t\\
&=\frac{\tilde{r}^3P(\Omega)}{n}\Bigg\lbrace\frac{2}{(n+1)(n+2)}\left[1-\left(1-\frac{rg}{\tilde{r}}\right)^{n+2}\right]-\frac{2}{n+1}\left(1-\frac{rg}{\tilde{r}}\right)^{n+1} \frac{rg}{\tilde{r}}\\
&\qquad\qquad\qquad-\left(\frac{rg}{\tilde{r}} \right)^2\left(1-\frac{rg}{\tilde{r}} \right)^n\Bigg\rbrace,
\end{align*}
where we used elementary integration for that last equality.
This shows~\eqref{ineq:lower_bound_I}.
\end{proof}

\begin{thm}\label{thm:explicit_solution_gradflow}
Under the conditions of Lemma~\ref{lem:geometric_integral} there is $t_*>0$ such that the initial value problem
\begin{align}\label{eq:ivp}
\begin{cases}
g'(t)&=\frac{g(t)^2}{I_2(g(t))},\quad t>0,\\
g(0)&=0,
\end{cases}
\end{align}
where $I_2$ is given by~\eqref{eq:function_I} for $k=2$, has a solution for $t\in[0,t_*]$.
Furthermore,
\begin{align}\label{eq:explicit_solution}
u(t,x)=
\begin{cases}
\min\left(\frac{1}{g(t)} d(x),r\right),\quad&0\leq t<t_*,\\
\frac{1}{\norm{d}_2^2}\left(\norm{d}_2^2+t_*-t\right)_+d(x),&t\geq t_*,
\end{cases}
\end{align}
solves the gradient flow \eqref{gradflow} with respect to $\calJ$ and datum $f\equiv r$.
\end{thm}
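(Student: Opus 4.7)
The plan is to construct, for each $t\in(0,t_*)$, an explicit $H(\div;\Omega)$-calibration $q_t$ of $u(t,\cdot)$ satisfying $-\div q_t=-\partial_t u(t,\cdot)$, so that Proposition~\ref{prop:characterization} yields $-\partial_t u(t,\cdot)\in\partial\calJ(u(t,\cdot))$, and to reduce the regime $t\geq t_*$ to the ground-state eigenvalue equation for the distance function. Since $\Omega_{\max}(u(t,\cdot))=\{d\leq rg(t)\}$ and $\nabla u(t,\cdot)=g(t)^{-1}\nabla d$ on this strip, the natural radial ansatz is
\begin{equation*}
q_t(x):=\psi_t(d(x))\,\nabla d(x),\qquad \psi_t(s):=\frac{g'(t)}{g(t)^2\,P(\Omega_s)}\int_s^{rg(t)}\tau\,P(\Omega_\tau)\,\d\tau,
\end{equation*}
extended by zero for $s\geq rg(t)$. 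Coarea plus Fubini gives $\norm{q_t}_1=g'(t)I_2(g(t))/g(t)^2$, so the ODE~\eqref{eq:ivp} encodes precisely the calibration normalization $\norm{q_t}_1=1$.

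Regarding existence of $g$, separation of variables recasts~\eqref{eq:ivp} as $t(g)=\int_0^g I_2(s)/s^2\,\d s$. The trivial bound $d\leq rg$ on $\Omega\setminus\Omega_{rg}$ yields $I_2(g)/g^2\leq r^2\abs{\Omega}$, and Taylor-expanding the lower bound from Lemma~\ref{lem:geometric_integral} around $g=0$ gives $I_2(g)=\Theta(g^3)$; hence $I_2(g)/g^2$ is continuous on $[0,1]$, vanishing at the origin, and $t_*:=t(1)<\infty$ provides the desired endpoint. The two branches of~\eqref{eq:explicit_solution} match at $t_*$ since each evaluates to $d(x)$ (using $g(t_*)=1$ and $d\leq r$). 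For $t\in[t_*,t_*+\norm{d}_2^2]$, write $u(t,\cdot)=a(t)\,d$ with $a'(t)=-1/\norm{d}_2^2$, so $-\partial_t u(t,\cdot)=d/\norm{d}_2^2$; by Theorem~\ref{thm:ground_states_distance} the distance function is a ground state with eigenvalue $1/\norm{d}_2^2$, and by absolute one-homogeneity of $\calJ$ we have $\partial\calJ(a(t)d)=\partial\calJ(d)\ni d/\norm{d}_2^2$. Beyond extinction, $u\equiv 0$ and $0\in\partial\calJ(0)$ trivially.

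What remains is the calibration identity for $t\in(0,t_*)$. The non-divergence hypotheses of Proposition~\ref{prop:characterization} are immediate: $q_t\cdot\nabla u(t,\cdot)=\abs{q_t}\abs{\nabla u(t,\cdot)}$ because both vectors are nonnegative multiples of $\nabla d$, and $q_t\equiv 0$ on $\{d>rg(t)\}=\Omega\setminus\overline{\Omega_{\max}}$ by construction. The main obstacle is the distributional identity $-\div q_t=g'(t)g(t)^{-2}\,d\,\chi_{\{d<rg(t)\}}$, because the formal expansion $\div(\psi_t(d)\nabla d)=\psi_t'(d)+\psi_t(d)\Delta d$ invokes $\Delta d$, which in general is a measure with singular part. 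I would verify the identity via coarea: for $\varphi\in C_c^\infty(\Omega)$, write
\begin{equation*}
\int_\Omega q_t\cdot\nabla\varphi\,\dx=\int_0^{rg(t)}\psi_t(s)\int_{\{d=s\}}\nabla d\cdot\nabla\varphi\,\d\mathcal{H}^{n-1}\,\d s,
\end{equation*}
apply the divergence theorem on each superlevel set $\Omega_s$ to rewrite the inner integral as $-\int_{\Omega_s}\Delta\varphi\,\dx$, swap the order of integration via Fubini, integrate by parts in $s$, and invoke the ODE to identify the outcome with $\int_\Omega(-\partial_t u(t,\cdot))\,\varphi\,\dx$. Because $\psi_t$ vanishes identically on $\{d\geq rg(t)\}$, the computation never touches the singular support of $\Delta d$, which the perimeter bound~\eqref{ineq:perimeter_bound} keeps outside the strip; one reads off that $\div q_t\in L^\infty(\Omega)$, so $q_t\in H(\div;\Omega)$ as required.
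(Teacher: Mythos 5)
Your overall architecture differs from the paper's. The paper never constructs a calibration: it verifies directly that the $L^2$-function $\sg:=-\partial_t u(t,\cdot)$ satisfies the two defining conditions of~\eqref{eq:subdifferential}, namely $\langle\sg,u(t)\rangle=\calJ(u(t))$ (which follows from the ODE, since this inner product equals $g'(t)g(t)^{-3}I_2(g(t))=1/g(t)$) and $\langle\sg,v\rangle\leq\calJ(v)$ for all $v\in W^{1,\infty}_0(\Omega)$ (which follows from the single pointwise Lipschitz estimate $|v(x)|\leq\calJ(v)\,d(x)$). That route is much shorter than yours and, more importantly, avoids every regularity question about $\Delta d$ and the level sets of $d$. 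Your ODE analysis via separation of variables is correct and actually cleaner than the paper's comparison argument, and your treatment of the eigenfunction regime $t\geq t_*$ is fine; but the centerpiece, the calibration step, has a genuine gap.

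The claim that ``the perimeter bound~\eqref{ineq:perimeter_bound} keeps the singular support of $\Delta d$ outside the strip'' is false, and with it the conclusion $q_t\in H(\div;\Omega)$. The perimeter bound controls the size of $P(\Omega_\tau)$, not the location of the ridge set of $d$. Consider $\Omega=(0,1)^2$, which is convex and admits the perimeter bound by Example~\ref{ex:perimeter_bound}. Its ridge set is the union of the two diagonals, emanating from the corners; hence it meets $\{d<rg(t)\}$ for every $t>0$. Near the corner at the origin, on $\{x<y\}$ one has $d=x$, $\nabla d=e_1$, $q_t=\psi_t(x)e_1$, while on $\{y<x\}$ one has $d=y$, $\nabla d=e_2$, $q_t=\psi_t(y)e_2$; the normal component of $q_t$ across $\{x=y\}$ thus jumps by $\sqrt2\,\psi_t(x)>0$ inside the strip, so $\div q_t$ carries an $\H^1$-singular part there. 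Consequently $\div q_t\notin L^2(\Omega)$, Proposition~\ref{prop:characterization} is not applicable, and the distributional identity $-\div q_t=-\partial_t u(t,\cdot)$ fails. This is precisely the obstruction discussed around~\eqref{eq:regular_calibrations}--Example~\ref{ex:measure-laplacian}: a calibration of the form $f\nabla u$ must vanish where $\Delta u$ is singular, whereas your $\psi_t$ is strictly positive on the part of the ridge that lies in $\{d<rg(t)\}$. The coarea\slash Fubini sketch does not rescue this; when pushed through it reproduces the same singular boundary term. To repair a calibration-based proof you would need an ansatz that vanishes (with sufficient order) on the ridge of $d$, which is considerably more delicate than the paper's direct verification and offers no compensating gain here.
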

\begin{proof}
Note that since $d$ is an eigenfunction of $\partial\calJ$, it is known that the dynamics for $t\geq t_*$ will linearly shrink the eigenfunction until extinction (cf.~\cite{bungert2019nonlinear,burger2016spectral}, for instance). 
Hence, we will focus on the initial dynamics and first show that the initial value problem~\eqref{eq:explicit_solution} has a solution $g(t)$, which persists long enough such that $g(t_*)=1$ for some $t_*>0$.
Afterwards, we will show that~\eqref{eq:explicit_solution} solves the gradient flow.
\paragraph{Step 1}
First we study the fine behavior of the lower bound in \eqref{ineq:lower_bound_I} as $g\searrow 0$. 
To this end, one notes that the derivative of the right hand side in \eqref{ineq:lower_bound_I} with respect to $g$ is given by $C(\frac{rg}{\tilde{r}})^2(1-\frac{rg}{\tilde{r}})^{n-1}$ with a positive constant $C=C(n,\Omega)>0$, which by L'H\^{o}pital's rule shows that 
\begin{align*}
    \liminf_{g\searrow 0}\frac{I_2(g)}{g^3}>0.
\end{align*}
In particular, for the ODE $g'(t)=g(t)^2/I_2(g(t))$ this implies that for small times $t>0$ the right hand side is dominated by $1/g(t)$. The fact that the problem
$$\phi'(t)=1/\phi(t),\qquad\phi(0)=0$$
has a solution (namely $\phi(t)=\sqrt{2t}$) implies existence of a solution to \eqref{eq:ivp} for small times. 
Analogously, due to the fact that $I_2(g)$ is bounded from above by the value $I_2(1)$ according to Lemma~\ref{lem:geometric_integral}, the right hand side in~\eqref{eq:ivp} is bounded from below by $g(t)^2/I_2(1)$.
Hence, if we fix $t_0>0$ in the existence interval of $g$, it holds for all $t\geq t_0$ in the existence interval that $g(t)\geq\phi(t-t_0)$, where $\phi$ solves
$$\phi'(t)=\phi(t)^2/I_2(1),\qquad \phi(0)=g(t_0)>0.$$
This problem has the blow-up solution $\phi(t)=g(t_0)I_2(1)/(I_2(1)-g(t_0)t)$ and hence we infer the existence of $t_*>0$ such that $g(t_*)=1$. 
\paragraph{Step 2}
It remains to be shown that~\eqref{eq:explicit_solution} solves the gradient flow. 
Obviously, it holds $u(0,x)=r=f(x)$ for all $x\in\Omega$ since $g(0)=0$.
Furthermore, we can compute that
$$\partial_t u(t,x)=-\frac{1}{2}\frac{g'(t)}{g(t)^2} d(x)\left[1-\sgn(d(x)-rg(t))\right],$$
which yields that for all $0<t<t_*$ we have
\begin{align*}
\langle-\partial_tu(t),u(t)\rangle=\frac{g'(t)}{g(t)^3}\underbrace{\int_{\Omega\setminus\Omega_{rg(t)}}d(x)^2\dx}_{=:I_2(g(t))}=\frac{1}{g(t)}=\calJ(u(t)),
\end{align*}
using that $g$ solves~\eqref{eq:ivp}.
Hence, we have shown $\langle-\partial_tu(t),u(t)\rangle=\calJ(u(t))$ and it remains to be shown that $\langle-\partial_t u(t),v\rangle\leq\calJ(v)$ holds for all $v\in W^{1,\infty}_0(\Omega)$.
We compute using that $g(t)$ solves \eqref{eq:ivp}:
$$\langle-\partial_t u(t),v\rangle=\frac{g'(t)}{g(t)^2}\int_{\Omega\setminus\Omega_{rg(t)}}d(x) v(x)\dx=\frac{1}{I_2(g(t))}\int_{\Omega\setminus\Omega_{rg(t)}}d(x) v(x)\dx.$$
For any $x\in\Omega$ we choose $y=y_x\in\partial\Omega$ such that $|x-y_x|=\min_{y\in\partial\Omega}|x-y|=d(x)$. Then using the Lipschitz continuity of $v$ (cf.~Remark~\ref{rem:lipschitz}) and $v(y_x)=0$, we obtain
$$|v(x)|=|v(x)-v(x_y)|\leq\calJ(v)d(x).$$
Putting things together we can finish the proof by calculating
\begin{align*}
\langle-\partial_tu(t),v\rangle\leq\frac{1}{I_2(g(t))}\int_{\Omega\setminus\Omega_{rg(t)}}d(x)|v(x)|\dx\leq\frac{\calJ(v)}{I_2(g(t))}\int_{\Omega\setminus\Omega_{rg(t)}}d(x)^2\dx=\calJ(v),
\end{align*}
which yields that $-\partial_t u(t)\in\partial\calJ(u(t))$.
\end{proof}

\begin{cor}[Motion of level sets]\label{cor:levelsets}
Under the conditions of Theorem~\ref{thm:explicit_solution_gradflow} the level sets 
\begin{align*}
    \Gamma_c(t)=\{x\in\Omega\st u(x)=c\}
\end{align*}
of $u(t)$ at level $c\geq 0$ and time $0\leq t\leq t_*$ are given by:
\begin{subequations}\label{eq:level_sets}
 \begin{align}
     \Gamma_c(t)&=\{x\in\Omega\st d(x)=c g(t)\},\qquad 0\leq c< r,\\
     \Gamma_r(t)&=\{x\in\Omega\st d(x)\geq rg(t)\}.
 \end{align}
\end{subequations}
This means that the level sets are inner parallel set of $\partial\Omega$ moving with a velocity that is proportional to both the level and function $g'(t)\approx 1/\sqrt{t}$ for small $t$.   
\end{cor}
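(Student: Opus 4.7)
The corollary is a direct consequence of the explicit formula
\[
u(t,x)=\min\left(\tfrac{1}{g(t)} d(x),\,r\right),\qquad 0\leq t\leq t_*,
\]
established in Theorem~\ref{thm:explicit_solution_gradflow}, so my plan is to simply unwind the definition of the level sets. The entire argument reduces to a case distinction on which branch of the minimum is active, together with a short differential computation for the velocity statement.

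For the first claim, I would fix $0\leq c<r$ and an arbitrary $x\in\Omega$ and ask when $\min(d(x)/g(t),r)=c$. Since $c<r$, the minimum cannot equal $r$, hence it must be attained by the first argument, giving $d(x)=cg(t)$; conversely, any $x$ with $d(x)=cg(t)$ satisfies $d(x)/g(t)=c<r$, so the minimum equals $c$. This proves the equality for $\Gamma_c(t)$. For $c=r$, the identity $\min(d(x)/g(t),r)=r$ is equivalent to $d(x)/g(t)\geq r$, i.e.\ $d(x)\geq rg(t)$, which is exactly the second identity in~\eqref{eq:level_sets}.

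For the velocity statement, I would observe that by construction $\Gamma_c(t)$ coincides with the boundary $\partial\Omega_{cg(t)}$ of the inner parallel body at distance $cg(t)$, which for smooth enough $\partial\Omega$ moves in the inward normal direction at rate $\frac{\d}{\d t}(cg(t))=c\,g'(t)$. So the normal speed is proportional to the level $c$ and to $g'(t)$. Finally, the asymptotic behavior $g'(t)\sim 1/\sqrt{t}$ as $t\searrow 0$ is read off from Step~1 of the proof of Theorem~\ref{thm:explicit_solution_gradflow}: there it was shown that near $t=0$ the ODE~\eqref{eq:ivp} is dominated by $\phi'=1/\phi$, whose solution is $\phi(t)=\sqrt{2t}$, giving $\phi'(t)=1/\sqrt{2t}$.

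I do not expect a genuine obstacle: the only subtlety is making the informal ``velocity'' statement precise, and this is handled by noting that the level sets are the pre-images of a moving threshold under the distance function. No further information about the function $g$ beyond its monotone differentiable nature on $[0,t_*]$ is needed for the set-theoretic identities~\eqref{eq:level_sets}.
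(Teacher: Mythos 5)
Your proof is correct and amounts to exactly the (unstated) argument the paper intends: the paper gives no proof of this corollary, treating it as an immediate read-off from the explicit formula~\eqref{eq:explicit_solution}, and your case split on which branch of the minimum is active, together with the observation $\frac{\d}{\d t}(cg(t))=cg'(t)$ and the extraction of $g'(t)\sim 1/\sqrt{t}$ from Step~1 of the proof of Theorem~\ref{thm:explicit_solution_gradflow}, is precisely that read-off made explicit. One tiny point worth keeping in mind (though it does not affect correctness): at $t=0$ the formula degenerates since $g(0)=0$, giving $\Gamma_c(0)=\emptyset$ for $0<c<r$ and $\Gamma_r(0)=\Omega$, which is consistent with $u(0,\cdot)\equiv r$; your set-theoretic identities handle this automatically.
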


\begin{rem}[Comparison to level set methods]
A traditional way to compute distance functions was proposed in \cite{sussman1994level} and uses the following PDE
\begin{align}\label{eq:time_dep_eikonal-1}
    \begin{cases}
        u(0,x)=f(x), \quad &x\in\R^n\\
        \partial_t u(t,x)+\sgn(f(x))(|\nabla u(t,x)|-1)=0,\quad &(t,x)\in(0,\infty)\times\R^n,
    \end{cases}
\end{align}
where the initial datum $f$ fulfills $f>0$ in $\Omega$, $f<0$ in $\R^n\setminus\Omega$, and $f=0$ in $\partial\Omega$. The steady state of this equation solves the Eikonal equation $|\nabla u|=1$ and coincides with the signed distance function of $\Omega$. Similarly, in \cite{lee2017revisiting} the authors use the PDE
\begin{align}\label{eq:time_dep_eikonal}
    \partial_t u(t,x)+|\nabla u(t,x)|=0
\end{align}
for a redistancing procedure that converges to the signed distance function as well. It is straightforward to see that points $x(t)$ in the level sets of the solutions of \eqref{eq:time_dep_eikonal-1} move with the following velocity
\begin{align}
    \dot{x}(t)&=\sgn(f(x(t)))\frac{|\nabla u(t,x(t))|-1}{|\nabla u(t,x(t))|}\frac{\nabla u(t,x(t))}{|\nabla u(t,x(t))|}.
\end{align}
In particular, for regions where the gradient is very steep the level sets of \eqref{eq:time_dep_eikonal-1} move with unit velocity whereas the level sets~\eqref{eq:level_sets} of our gradient flow solution move with velocity $\approx1/\sqrt{t}$ for small times.
\end{rem}

\begin{example}[One-dimensional interval]
Let us consider the gradient flow \eqref{gradflow} with datum $f:=1$ on the domain $\Omega:=(-1,1)$. Then the solution is given by
\begin{align}
u(t,x)=\begin{cases}
\min\left(\frac{1}{\sqrt{3t}}(1-|x|),1\right),\qquad&0\leq t<\frac{1}{3},\\
\frac{3}{2}\left(1-t\right)_+(1-|x|),\qquad&t\geq\frac{1}{3}.
\end{cases}
\end{align}
\end{example}

\begin{example}[Two-dimensional disk]
We study the case $\Omega=B_1(0)\subset\R^2$ where $r=1$. From Example~\ref{ex:perimeter_bound} we know that \eqref{ineq:lower_bound_I} is in fact an equality since $\Omega$ is a ball and thus it holds
$$I_2(g)=\frac{\pi}{6}g^3(4-3g).$$
Hence the initial value problem \eqref{eq:ivp} becomes
\begin{align}\label{eq:ivp_for_circle}
g'(t)=\frac{g(t)^2}{I_2(g(t))}=\frac{6}{\pi}\frac{1}{g(t)}\frac{1}{4-3g(t)},\quad g(0)=0.
\end{align}
In Figure~\ref{fig:g_for_circle} we plot a numerical approximation for $g$. In particular, we see that for small times $t>0$ function $g(t)$ is proportional to the square root of $t$ whereas these dynamics change for larger times, as it can be expected from \eqref{eq:ivp_for_circle}. 
\begin{figure}[h!]
\centering
{\includegraphics[width=0.49\textwidth,trim = 4cm 10cm 4cm 10cm, clip]{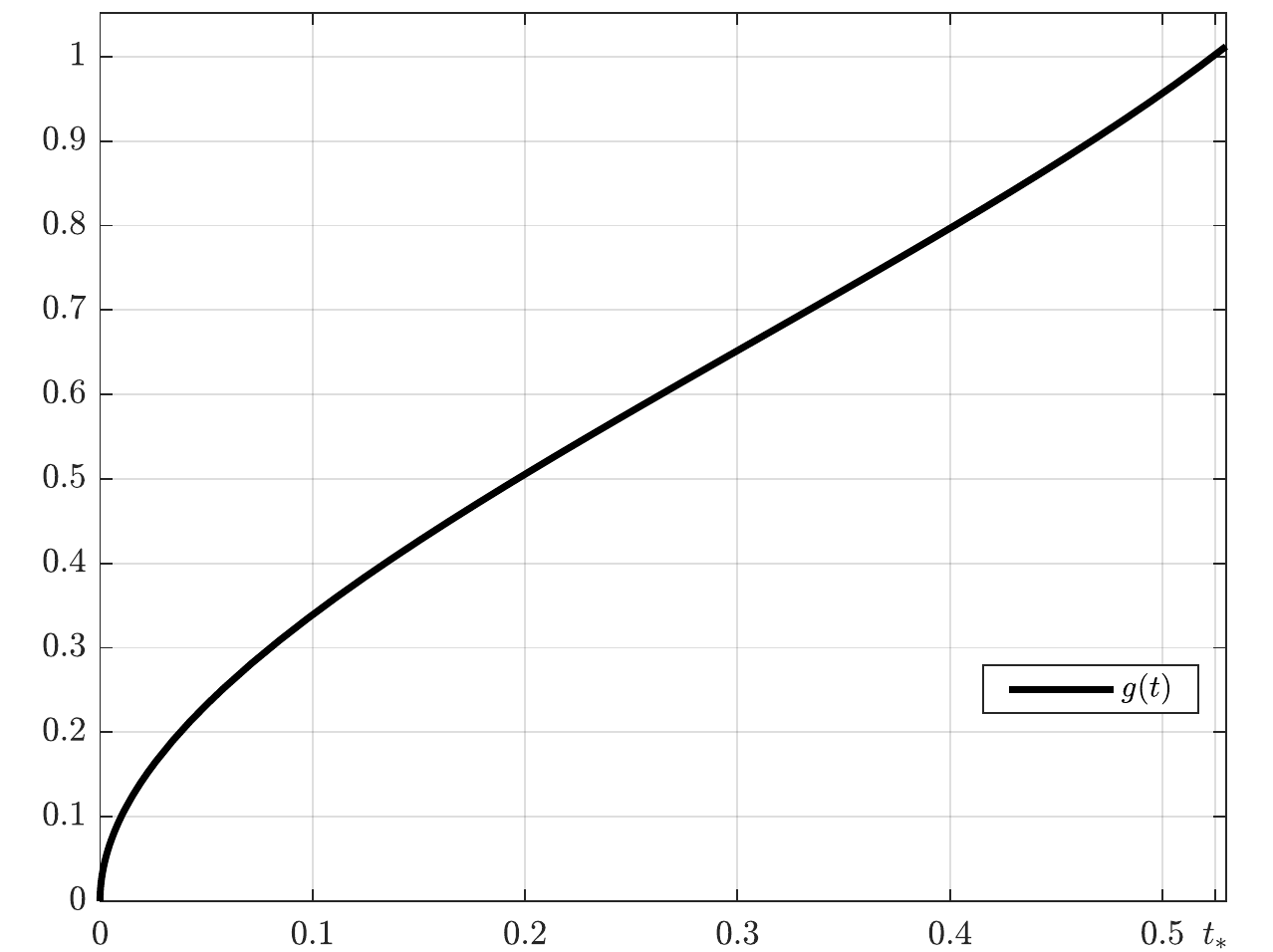}}
\caption{$g(t)$ for the unit circle\label{fig:g_for_circle}}
\end{figure}
\end{example}
Next, we prove that the analytic solution \eqref{eq:explicit_solution} also solves the variational regularization problem \eqref{varprob}. 

\begin{thm}[Variational problem]
Under the conditions of Theorem~\ref{thm:explicit_solution_gradflow} it holds that \eqref{eq:explicit_solution} is the unique solution of
\begin{align}\label{eq:varprob}
    \min_{u\in W^{1,\infty}_0(\Omega)}\frac{1}{2}\norm{u-f}^2_2+t\norm{\nabla u}_\infty,
\end{align}
where $f\equiv r$.
\end{thm}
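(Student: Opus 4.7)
The plan is to verify that, for every $t\geq 0$, the candidate $u(t,\cdot)$ from \eqref{eq:explicit_solution} satisfies the first-order optimality condition for \eqref{eq:varprob},
$$\frac{f-u}{t}\in\partial\calJ(u),$$
and to conclude uniqueness from the strict convexity of the quadratic term. Since $\calJ$ is absolutely one-homogeneous, the characterization \eqref{eq:subdifferential} reduces this inclusion to two conditions: the inequality $\langle (f-u)/t,v\rangle\leq\calJ(v)$ for every $v\in L^2(\Omega)$, and the equality $\langle(f-u)/t,u\rangle=\calJ(u)$.

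The backbone of the argument will be the identity
$$\int_\Omega (f-u(t,x))\,d(x)\dx = t \qquad \text{for all } t\in[0,\,t_*+\norm{d}_2^2],$$
which is in effect already encoded in the construction of $g$ in the proof of Theorem~\ref{thm:explicit_solution_gradflow}. For $0<t<t_*$, expanding the left-hand side piecewise through the integrals of Lemma~\ref{lem:geometric_integral} yields $rI_1(g(t))-I_2(g(t))/g(t)$; differentiating in $t$ and using the two elementary consequences $rI_1'(g)=I_2'(g)/g=P(\Omega_{rg})r^3g$ together with the ODE~\eqref{eq:ivp} gives derivative one and zero initial value. Evaluating at $g=1$ pins down $t_*=r\norm{d}_1-\norm{d}_2^2$; the identity then extends to $t_*\leq t\leq t_*+\norm{d}_2^2$ by substituting $u(t)=\alpha(t)d$ with $\alpha(t)=(r\norm{d}_1-t)/\norm{d}_2^2$.

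Given this identity, the inequality half of the subdifferential inclusion is essentially automatic. The explicit formula \eqref{eq:explicit_solution} gives $f-u(t,\cdot)\geq 0$ pointwise, so the Lipschitz bound $|v(x)|\leq\calJ(v)\,d(x)$ from Remark~\ref{rem:lipschitz} yields, for every $v\in W^{1,\infty}_0(\Omega)$,
$$\int_\Omega \frac{f-u(t,x)}{t}\,v(x)\dx \;\leq\; \frac{\calJ(v)}{t}\int_\Omega (f-u(t,x))\,d(x)\dx \;\leq\; \calJ(v),$$
where the final step is equality for $t\leq t_*+\norm{d}_2^2$ by the key identity, and for $t>t_*+\norm{d}_2^2$ reduces to $r\norm{d}_1\leq t$, which holds in that phase by definition. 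The remaining equality condition $\langle(f-u(t))/t,u(t)\rangle=\calJ(u(t))$ is then a direct computation in each of the three phases: for $0<t<t_*$ it reuses the identity above; for $t_*\leq t\leq t_*+\norm{d}_2^2$ it uses $\calJ(u(t))=\alpha(t)$ together with the defining relation for $\alpha(t)$; for $t>t_*+\norm{d}_2^2$ it is trivial since $u(t)\equiv 0$.

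The only genuine obstacle is algebraic bookkeeping, in particular matching the end of the initial-dynamics phase with the start of the eigenfunction-shrinkage phase at $t=t_*$, which comes down to the identification $t_*=r\norm{d}_1-\norm{d}_2^2$ extracted from the key identity at $g=1$. Once this is observed, the three phases dovetail seamlessly and uniqueness of the minimizer follows from strict convexity of the quadratic penalty.
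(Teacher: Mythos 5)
Your proposal is correct and takes essentially the same route as the paper: both verify the first-order optimality condition by introducing the natural time parameter $\tilde t=rI_1(g(t))-I_2(g(t))/g(t)$ (your key identity $\int_\Omega(f-u(t))d\dx=t$) and showing $\tilde t=t$ via the chain rule, the $I_k'$ formulas from Lemma~\ref{lem:geometric_integral}, and the ODE~\eqref{eq:ivp}, followed by the same Lipschitz bound $|v(x)|\leq\calJ(v)\,d(x)$ for the inequality half. Your explicit identification $t_*=r\norm{d}_1-\norm{d}_2^2$ and the careful piecewise treatment of the phases $t\geq t_*$ are worthwhile additions that the paper leaves implicit, but the core mechanism is identical.
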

\begin{proof}
The optimality condition for problem \eqref{eq:varprob} are given by $(f-u(t))/t\in\partial J(u(t))$, which is sufficient for optimality due to convexity of~\eqref{eq:varprob}.
We first show that $(f-u(t))/\tilde{t}\in\partial\calJ(u(t))$ where
\begin{align}
    % \tilde{t}:=r\int_{\Omega\setminus\Omega_{rg(t)}}d(x)\dx-\frac{1}{g(t)} \int_{\Omega\setminus\Omega_{rg(t)}}d(x)^2\dx.
    \tilde{t}:=rI_1(g(t))-\frac{1}{g(t)} I_2(g(t)),
\end{align}
and the functions $I_k$ for $k\in\{1,2\}$ are given by~\eqref{eq:function_I}.
In a second step we show that $\tilde{t}=t$.
\paragraph{Step 1} By the definition of $\tilde{t}$ and the functions $I_k$ it holds
\begin{align*}
    \left\langle\frac{f-u(t)}{\tilde{t}},u(t)\right\rangle&=\frac{1}{\tilde{t}}\int_{\Omega\setminus\Omega_{rg(t)}}\left( r-\frac{d(x)}{g(t)}\right)\frac{d(x)}{g(t)}\dx \\
    &= \frac{1}{\tilde{t}}\left(\frac{r}{g(t)}I_1(g(t))-\frac{1}{g(t)^2}I_2(g(t))\right)\\
    &=\frac{1}{g(t)}=\calJ(u(t)).
\end{align*}
Furthermore, for any $v\in W^{1,\infty}_0(\Omega)$ one computes
\begin{align*}
    \left\langle \frac{f-u(t)}{\tilde{t}},v \right\rangle=\frac{1}{\tilde{t}}\int_{\Omega\setminus\Omega_{rg(t)}}\left(r-\frac{d(x)}{g(t)}\right)v(x)\dx\leq\calJ(v),
\end{align*}
where we used Lipschitz continuity of $v$ just as in the proof of Theorem~\ref{thm:explicit_solution_gradflow}. Hence, we have established $(f-u(t))/\tilde{t}\in\partial\calJ(u(t))$.
\paragraph{Step 2}
To show $\tilde{t}=t$ we use the chain rule and \eqref{eq:derivative_I} from Lemma~\ref{lem:geometric_integral} for $k\in\{1,2\}$ to obtain
\begin{align*}
    \frac{\d}{\d t}\tilde{t}&=rg'(t)I_1'(g(t))+\frac{g'(t)}{g(t)^2}I_2(g(t))-\frac{g'(t)}{g(t)}I_2'(g(t))\\
    &=r g'(t)P(\Omega_{rg(t)})r^2g(t)+\frac{g'(t)}{g(t)^2}I_2(g(t)) -\frac{g'(t)}{g(t)} P(\Omega_{rg(t)})r^3g(t)^2\\
    &=\frac{g'(t)}{g(t)^2}I_2(g(t))=1,
\end{align*}
where the last equality holds since $g(t)$ solves the ODE \eqref{eq:ivp}. 
Furthermore, using L'H\^{o}pital's rule and~\eqref{eq:derivative_I} it holds
\begin{align*}
    \lim_{t\searrow 0}\tilde{t}=\lim_{t\searrow 0}\left[rI_1(g(t))-\frac{1}{g(t)}I_2(g(t))\right] 
    =-\lim_{t\searrow 0}\frac{I_2(g(t))}{g(t)}
    =-\lim_{t\searrow 0}I_2'(g(t)) = 0,
\end{align*}
which finally implies that $\tilde{t}=t$.
\end{proof}

\section{Extreme points}\label{sec:extreme_points}
In this section we aim to characterize extreme points of the unit ball $B_\calJ$ of $\calJ$, which is given by
\begin{align}\label{eq:unit_ball}
    B_\calJ:=\left\lbrace u\in L^2(\Omega)\st \calJ(u)\leq 1\right\rbrace,
\end{align}
and is a convex set and closed set in $L^2(\Omega)$. For a general convex set $C$, its extreme points are defined as
\begin{align}
    \extr(C):=\left\lbrace u\in C\st \nexists\,v\neq w\in C,\,\lambda\in(0,1)\st u=\lambda v+(1-\lambda)w\right\rbrace,
\end{align}
meaning the extreme points of $C$ are precisely those points which cannot be expressed through a non-trivial convex combination of other points in $C$.

The set of extreme points of the unit ball of a similar functional has already been studied in \cite{farmer1994extreme,smarzewski1997extreme}. There the authors considered the Lipschitz semi-norm of functions on a metric space which have a prescribed value in one point. Our situation is more complicated since we prescribe a value on the whole boundary of $\Omega$. 

The following theorem characterizes the extreme points of $B_\calJ$ analogously to the results in \cite{farmer1994extreme}. In a nutshell, a function in $B_\calJ$ is extreme if and only if for almost every point in the domain there exists a path from the point to the boundary of the domain such that the gradient of the function has unit modulus along this path. 
To this end one introduces the quantity
\begin{align}
    \eps_{x,z}^u:=\inf\left\lbrace \eps>0 \st |x_{i-1}-x_i|-\eps_i\leq |u(x_{i-1})-u(x_i)|\right\rbrace,\label{eq:def_eps}
\end{align}
where the infimum is computed over all finite sequences of non-negative numbers $(\eps_i)_{i=1,\dots,n}$ fulfilling $\sum_{i=1}^n\eps_i\leq\eps$, and points $(x_i)_{i=0,\dots,n}$ with $x_0=z,x_1,\dots,x_n=x$. 

Loosely speaking, $\eps_{x,z}^u$ measures the deviation of the gradient norm from being $1$, while moving on a path from $x$ to the boundary point $z$.
The following theorem states that if the infimum of \eqref{eq:def_eps} over all boundary points $z$ is zero, $u$ is an extreme function.
We postpone the proof to the appendix since it is a lengthy generalization of the proof in~\cite{farmer1994extreme}.

\begin{thm}[Characterization of extreme points]\label{thm:extremal_points}
It holds that $u\in\extr(B_\calJ)$ if and only if for almost all $x\in\Omega$ it holds 
\begin{align}
    \inf_{z\in\partial\Omega}\eps_{x,z}^u=0,\label{eq:cons_inf_eps}
\end{align}
where $\eps_{x,z}^u$ is given by \eqref{eq:def_eps}.
\end{thm}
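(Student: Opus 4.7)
The plan is to introduce the auxiliary function $\psi(x) := \inf_{z \in \partial\Omega} \eps_{x,z}^u$ and use it both to detect non-extremality and to enforce uniqueness of convex decompositions. The central analytic tool is a subadditivity property of $\eps^u_{\cdot,\cdot}$: by prepending a one-step segment $y \to x$ to any path from $x$ to $z$ and accounting for the additional slack, I would show $\eps_{y,z}^u \leq (|x-y| - |u(x) - u(y)|) + \eps_{x,z}^u$. Taking the infimum over $z \in \partial\Omega$ and symmetrizing in $x, y$ yields the Lipschitz-type estimate
\begin{equation*}
|u(x) - u(y)| + |\psi(x) - \psi(y)| \leq |x - y|, \qquad x, y \in \Omega.
\end{equation*}
Together with $\psi \geq 0$ and $\psi|_{\partial\Omega} = 0$ (use the constant path), this guarantees that both $u + \psi$ and $u - \psi$ belong to $B_\calJ$.

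For the direction that $u \in \extr(B_\calJ)$ implies \eqref{eq:cons_inf_eps}, I would argue by contrapositive: if \eqref{eq:cons_inf_eps} fails on a set of positive Lebesgue measure, then $\psi$ is not the zero element of $L^2(\Omega)$, so $u + \psi \neq u - \psi$, and the decomposition $u = \tfrac{1}{2}(u + \psi) + \tfrac{1}{2}(u - \psi)$ contradicts extremality.

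Conversely, suppose \eqref{eq:cons_inf_eps} holds almost everywhere and $u = \lambda v + (1-\lambda) w$ with $v, w \in B_\calJ$, $\lambda \in (0,1)$. Fix $x$ with $\psi(x) = 0$ and, for $\eps > 0$, a path $z = x_0, \ldots, x_n = x$ with slack $\sum_i \delta_i \leq \eps$, where $\delta_i := L_i - |u(x_i) - u(x_{i-1})|$ and $L_i := |x_i - x_{i-1}|$. Writing $a_i := v(x_i) - v(x_{i-1})$, $b_i := w(x_i) - w(x_{i-1})$, $c_i := \lambda a_i + (1-\lambda) b_i$, the convex-combination identity
\begin{equation*}
\lambda a_i^2 + (1-\lambda) b_i^2 - c_i^2 = \lambda(1-\lambda)(a_i - b_i)^2,
\end{equation*}
combined with $|a_i|, |b_i|, |c_i| \leq L_i$ and $|c_i|^2 \geq L_i^2 - 2L_i \delta_i$, yields $(a_i - b_i)^2 \leq 2 L_i \delta_i / (\lambda(1-\lambda))$. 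A weighted Cauchy-Schwarz estimate together with $v(z) = w(z) = 0$ then produces
\begin{equation*}
|v(x) - w(x)| \leq \sum_i |a_i - b_i| \leq \sqrt{\frac{2 P \eps}{\lambda(1-\lambda)}}, \qquad P := \sum_i L_i.
\end{equation*}
Sending $\eps \to 0$ along paths of uniformly bounded length forces $v(x) = w(x)$ for a.e. $x$, so $v = w$ in $L^2(\Omega)$ and $u$ is extreme.

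The main obstacle is guaranteeing the \emph{uniformly bounded} path length needed in the last step: $\psi(x) = 0$ supplies a minimizing sequence of paths but gives no a priori bound on $P$. I would overcome this by restricting attention to paths that are piecewise $u$-monotone with at most two monotone arcs (an ascent from $x$ to a local extremum of $u$ followed by a descent to $\partial\Omega$), along which $\sum_i |u(x_i) - u(x_{i-1})|$ telescopes and is bounded by $2\|u\|_\infty$. Hence slack $\leq \eps$ forces $P \leq 2\|u\|_\infty + \eps$. A preparatory pruning lemma is required to show that slack-minimizing sequences of general paths can be replaced by such piecewise-monotone ones without increasing the slack; this path-surgery is the main technical burden and is, as the authors note, the lengthy generalization of \cite{farmer1994extreme} to the Dirichlet setting that is relegated to the appendix.
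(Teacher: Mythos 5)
Your proof of the ``$\Longrightarrow$'' direction (extremality implies the infimum condition) is correct and essentially the paper's argument: you define $\psi(x) := \inf_{z\in\partial\Omega}\eps^u_{x,z}$, derive the Lipschitz-type estimate $|u(x)-u(y)|+|\psi(x)-\psi(y)|\leq|x-y|$ from the subadditivity of $\eps^u_{\cdot,z}$ (this is exactly the paper's Lemma~\ref{lem:triangle_eps} after infimizing over $z$ and symmetrizing), and conclude that $u\pm\psi\in B_\calJ$ decompose $u$ nontrivially whenever $\psi$ is nonzero on a positive-measure set. The paper phrases this with the set $A=\{\hat\eps_x>0\}$ rather than a global $\psi$, but the content is the same.

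The ``$\Longleftarrow$'' direction, however, takes a genuinely different route and runs into a gap that the paper's proof does not have. Your quadratic identity $\lambda a_i^2+(1-\lambda)b_i^2-c_i^2=\lambda(1-\lambda)(a_i-b_i)^2$ together with $|a_i|,|b_i|\leq L_i$ and $c_i^2\geq L_i^2-2L_i\delta_i$ correctly yields $(a_i-b_i)^2\leq 2L_i\delta_i/\bigl(\lambda(1-\lambda)\bigr)$, but after Cauchy--Schwarz the bound on $|v(x)-w(x)|$ picks up a factor $\sqrt{P}$ with $P=\sum_i L_i$, and you rightly flag that you need paths whose length stays bounded as the slack tends to zero. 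The fix you propose --- pruning to at most two $u$-monotone arcs --- is not clearly available: for oscillating functions such as the $1$D eigenfunctions of Example~\ref{ex:1d_efs}, any small-slack path from an interior point to $\partial\Omega$ must traverse several monotone pieces of $u$, and merging consecutive segments \emph{across} a local extremum of $u$ strictly increases the slack, so the reduction to two arcs cannot be done by local surgery. This is also not what the appendix does: the ``lengthy generalization'' referred to there is the triangle lemma and the passage from a single fixed basepoint in \cite{farmer1994extreme} to the Dirichlet boundary, not a path-pruning argument.

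The paper avoids the length issue entirely by a linear rather than quadratic estimate. After reducing to $\lambda=1/2$ (which is without loss of generality for extreme points: if $u=\lambda v+(1-\lambda)w$ with $v\neq w$, then $v'=u+t(v-w)$, $w'=u-t(v-w)$ with $t=\min(\lambda,1-\lambda)$ are in $B_\calJ$ and satisfy $u=(v'+w')/2$), one compares the Lipschitz bound on $v$ with the near-saturated bound on $u$ on each segment and obtains
\begin{equation*}
\bigl|(u-v)(x_{i-1})-(u-v)(x_i)\bigr|\ \leq\ \eps_i,\qquad i=1,\dots,n.
\end{equation*}
Telescoping along the path and using $u(x_0)=v(x_0)=0$ at $x_0=z\in\partial\Omega$ gives $|u(x)-v(x)|\leq\sum_i\eps_i\leq\eps$, with no dependence on $\sum_i L_i$. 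This $L^1$-type telescoping is the key simplification your argument misses: it makes the uniform-path-length lemma, and hence the main technical burden of your proposal, unnecessary.
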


In the following proposition we sandwich the set of extreme points between two other interesting sets, namely those functions whose gradient has modulus one everywhere except from a set with zero measure or non-empty interior, respectively.

\begin{prop}[Sandwiching extreme points]\label{prop:extreme}
It holds that 
\begin{align}
    \left\lbrace u\in B_\calJ\st|\Omega\setminus\Omega_{\max}|=0 \right\rbrace \subset
    \extr(B_\calJ)\subset
    \left\lbrace u\in B_\calJ\st \inn\left(\Omega\setminus\Omega_{\max}\right)=\emptyset\right\rbrace.
\end{align}
\end{prop}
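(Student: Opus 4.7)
The plan is to prove the two inclusions separately: strict convexity of the Euclidean unit ball handles the left one, and a localized bump-function perturbation handles the right one. Throughout, since any extreme point of $B_\calJ$ necessarily satisfies $\calJ(u)=1$, the informative case is $L:=\calJ(u)=1$; points with $L<1$ lie in the interior of $B_\calJ$ and are never extreme, which is itself captured by the $L<1$ case of the second inclusion.

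For the left inclusion, I would take $u$ with $|\Omega\setminus\Omega_{\max}|=0$, so that $|\nabla u|\equiv 1$ a.e. If $u=\tfrac12(v+w)$ for some $v,w\in B_\calJ$, then at almost every $x$ the identity $\nabla u(x)=\tfrac12(\nabla v(x)+\nabla w(x))$ saturates the triangle inequality given $|\nabla v(x)|,|\nabla w(x)|\leq 1=|\nabla u(x)|$, so strict convexity of the Euclidean unit sphere forces $\nabla v(x)=\nabla w(x)=\nabla u(x)$. Thus $\nabla(v-w)=0$ a.e., and the vanishing trace of $v-w\in W^{1,\infty}_0(\Omega)$ then yields $v=w$, whence $u\in\extr(B_\calJ)$.

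For the right inclusion I would argue the contrapositive: if $\inn(\Omega\setminus\Omega_{\max})\neq\emptyset$, I would construct a nonzero perturbation making $u$ decomposable. When $L<1$, any $\phi\in C_c^\infty(\Omega)\setminus\{0\}$ scaled so that $\eps\|\nabla\phi\|_\infty\leq 1-L$ yields $u\pm\eps\phi\in B_\calJ$ and the decomposition $u=\tfrac12((u+\eps\phi)+(u-\eps\phi))$. When $L=1$, I would extract from $\inn(\Omega\setminus\Omega_{\max})$ an open ball $B'\subset\Omega$ with $\|\nabla u\|_{L^\infty(B')}\leq 1-c$ for some $c>0$; then for $\phi\in C_c^\infty(B')\setminus\{0\}$ with $\eps\leq c/\|\nabla\phi\|_\infty$ the pointwise bound $|\nabla(u\pm\eps\phi)|\leq(1-c)+c=1$ holds a.e.\ on $B'$ and trivially off $B'$, completing the construction.

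The main obstacle is the extraction of the uniform $L^\infty$-gap in the $L=1$ case: a priori, the hypothesis only provides an open ball on which $|\nabla u|<1$ a.e., and the essential supremum over every sub-ball might still equal $1$ if the level sets $\{|\nabla u|\geq 1-\eps\}$ are measure-dense. The cleanest resolution is to interpret $\Omega_{\max}$ via the upper semicontinuous envelope $g(x):=\lim_{r\searrow 0}\|\nabla u\|_{L^\infty(B_r(x))}$, so that $\Omega\setminus\Omega_{\max}$ becomes the open set $\{g<1\}$ and every point of it automatically lies in a ball with the desired $L^\infty$-gap; under this convention the bump argument goes through directly. Under a weaker reading one would instead have to construct a nontrivial Lipschitz $\phi\in W^{1,\infty}_0(\Omega)$ satisfying the pointwise constraint $|\nabla\phi|\leq 1-|\nabla u|$ a.e., which is essentially a weighted-Eikonal problem with a merely $L^\infty$ weight and is considerably more delicate.
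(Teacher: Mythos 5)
Your argument is correct and follows essentially the same route as the paper: for the left inclusion the paper unfolds your single strict-convexity appeal into two explicit steps (a triangle-inequality estimate forcing $|\nabla v|=|\nabla w|=1$ a.e., followed by equality in Cauchy--Schwarz to force $\nabla v=\nabla w$), and for the right inclusion the paper perturbs with $\phi=\eps\,\dist(\cdot,\partial\Omega_\eps)$ supported on a subset $\Omega_\eps\subset\Omega\setminus\Omega_{\max}$ with nonempty interior, which is the same mechanism as your smooth bump $\phi\in C_c^\infty(B')$. The uniform-gap issue you flag is a genuine one that the paper glosses over: it simply asserts the existence of a set $\Omega_\eps\subset\Omega\setminus\Omega_{\max}$ with nonempty interior on which $|\nabla u|\leq 1-\eps$ a.e., but this does not follow from $\inn(\Omega\setminus\Omega_{\max})\neq\emptyset$ when $\Omega_{\max}$ is defined only up to a null set, since the sublevel sets $\{|\nabla u|\leq 1-1/k\}$ may all have empty interior. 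Reading $\Omega_{\max}$ through the upper semicontinuous envelope $g(x):=\lim_{r\searrow 0}\|\nabla u\|_{L^\infty(B_r(x))}$, as you suggest, is the convention under which both the statement and both proofs become unambiguous, and making this explicit is an improvement over the paper's presentation. One small caveat in your framing: $B_\calJ$ has empty interior in $L^2(\Omega)$, so the claim that $L<1$ places $u$ in the interior of $B_\calJ$ is not literally correct; the intended conclusion—that such $u$ is never extreme—does follow, either from your perturbation argument or simply by writing $u=Lv+(1-L)\cdot 0$ with $v=u/L$.
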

\begin{proof}
For the first inclusion we take $u\in W^{1,\infty}_0(\Omega)$ with $\abs{\nabla u}=1$ almost everywhere, and assume that there are $v\neq w\in B_\calJ$ and $\lambda\in(0,1)$ such that $u=\lambda v+(1-\lambda)w$. Defining the set $\Omega_\eps=\{x\in\Omega\st \abs{\nabla v(x)}\leq1-\eps\}$ for $\eps>0$, we obtain
$$1=\abs{\nabla u(x)}\leq \lambda\abs{\nabla v(x)}+(1-\lambda)\abs{\nabla w(x)}\leq \lambda(1-\eps)+(1-\lambda)=1-\lambda \eps,\quad\text{for a.e. }x\in\Omega_\eps.$$
Since $\lambda>0$, this implies that $|\Omega_\eps|=0$ and hence $\abs{\nabla v}=1$ almost everywhere in $\Omega$. Applying the same argument to $w$ shows that $\abs{\nabla w}=1$ holds almost everywhere, as well. Using the Cauchy-Schwarz inequality, we can compute for almost every $x\in\Omega$ 
\begin{align*}
    1&=\abs{\nabla u(x)}^2\\
    &=\lambda^2\abs{\nabla v(x)}^2+(1-\lambda)^2\abs{\nabla w(x)}^2+2\lambda(1-\lambda)\nabla v(x)\cdot\nabla w(x)\\
    &\leq \lambda^2+(1-\lambda)^2+2\lambda(1-\lambda)\\
    &=1.
\end{align*}
Since $\abs{\nabla v}=1=\abs{\nabla w}$, equality has to hold for Cauchy-Schwarz which implies that $\nabla v(x)=c\nabla w(x)$ for some $c\geq 0$. Using that $\abs{\nabla v}=1=\abs{\nabla w}$ implies $c=1$ and hence $\nabla v=\nabla w$ almost everywhere in $\Omega$. Therefore, $v-w$ is constant in $\Omega$ and from $v,w=0$ on $\partial\Omega$ we infer that $v=w$, a contradiction.

For the second inclusion we take some $u\in\extr(B_\calJ)$ and---again aiming for a contradiction---we assume that $\Omega\setminus\Omega_{\max}$ has non empty interior. In this case we set
\begin{align}
    v_\pm(x):=
    \begin{cases}
    u,\quad&x\in\Omega_{\max}\\
    u\pm\phi,\quad&x\in\Omega\setminus\Omega_{\max}.
    \end{cases}
\end{align}
with a function $\phi\neq 0$ to be specified. Obviously, it holds $v_+\neq v_-$ since $\abs{\Omega\setminus\Omega_{\max}}>0$ and furthermore $u=v_+/2+v_-/2$. If we can choose $\phi$ in such a way that $\calJ(v_\pm)\leq 1$, we have reached the desired contradiction. Since $\Omega\setminus\Omega_{\max}$ has non-empty interior there is $\eps>0$ and a set $\Omega_\eps\subset \Omega\setminus\Omega_{\max}$ with non-empty interior such that $\abs{\nabla u}\leq 1-\eps$ almost everywhere on $\Omega_\eps$. If we define 
\begin{align}
    \phi(x)=
    \begin{cases}
    \eps\dist(x,\partial\Omega_\eps),\quad &x\in\Omega_\eps,\\
    0,\quad&\text{else},
    \end{cases}
\end{align}
we infer that $\abs{\nabla v_\pm(x)}=1$ for $x\in\Omega_{\max}$ and $\abs{\nabla v_\pm(x)}\leq (1-\eps)+\eps=1$ for $x\in\Omega\setminus\Omega_{\max}$. Hence, it holds $\calJ(v_\pm)\leq 1$ which means $v_\pm\in B_\calJ$. Finally, $\phi\neq 0$ holds since $\Omega_\eps$ has non-empty interior and therefore does not coincide with its boundary. This is a contradiction and we can conclude.
\end{proof}

\begin{cor}[Distance function is extreme point]
Since $\Omega_{\max}=\Omega$ for the distance function to $\partial\Omega$, we obtain that the distance function is an extreme point.
\end{cor}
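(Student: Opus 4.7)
The plan is to apply the first inclusion of Proposition~\ref{prop:extreme} to the (normalised) distance function. To do this, I only need to verify two facts: that the distance function lies in $B_\calJ$, and that its gradient has unit modulus almost everywhere, i.e.\ $|\Omega\setminus\Omega_{\max}|=0$.

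First I would recall that $d(x)=\dist(x,\partial\Omega)$ is $1$-Lipschitz on $\R^n$ and vanishes on $\partial\Omega$, so by Remark~\ref{rem:lipschitz} we have $d\in W^{1,\infty}_0(\Omega)$. Next, I would invoke the classical fact that the distance function satisfies the eikonal equation $|\nabla d|=1$ Lebesgue-almost everywhere in $\Omega$; this follows, for instance, from Rademacher's theorem together with the observation that at any point $x$ of differentiability of $d$ one has $|\nabla d(x)|=1$ (the upper bound is the Lipschitz constant, the lower bound follows by differentiating along a segment from $x$ to a nearest boundary point). Consequently $\calJ(d)=\norm{\nabla d}_\infty=1$, so $d\in B_\calJ$, and $\Omega_{\max}=\{x\in\Omega\st|\nabla d(x)|=1\}$ differs from $\Omega$ only by a Lebesgue null set.

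With these two ingredients in place, the conclusion is immediate from the left inclusion of Proposition~\ref{prop:extreme}: any $u\in B_\calJ$ with $|\Omega\setminus\Omega_{\max}|=0$ belongs to $\extr(B_\calJ)$, and $d$ satisfies exactly this hypothesis. No genuine obstacle arises; the entire content of the corollary is that Proposition~\ref{prop:extreme} is already tight enough to capture the distance function, and the only point worth flagging explicitly is the standard fact $|\nabla d|=1$ a.e.
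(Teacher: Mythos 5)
Your proof is correct and follows exactly the paper's approach: apply the left inclusion of Proposition~\ref{prop:extreme} after noting $d\in B_\calJ$ and $|\nabla d|=1$ a.e.\ in $\Omega$. The only difference is that you spell out the standard justification for $|\nabla d|=1$ a.e., which the paper leaves implicit.
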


\begin{rem}
In general, both inclusions in Proposition~\ref{prop:extreme} are proper. The second inclusion is proper even in one dimension, as Example~\ref{ex:distance_extreme} below shows. In general, also the first inclusion is proper since in \cite{rolewicz1986extremal} the author constructs a extremal function $u:[0,1]^2\to\R$ with $\norm{\nabla u}_\infty=1$ whose gradient is supported on a set with arbitrarily small positive measure. This function can be slightly modified to vanish on the boundary of $\Omega$ and hence provides a valid counterexample. The construction involves the distance function of a fat Cantor set, which we have already investigated in Example~\ref{ex:cantor}, and relies on a connectedness argument. However, in one space dimension one can prove that the first inclusion is indeed an equality. 
\end{rem}
Before we prove that the first inclusion in Proposition~\ref{prop:extreme} is an equality in one dimension, we give an example to show the second inclusion is proper. To this end we show that the distance function to a fat Cantor set is no extreme point.

\begin{example}[Distance function to Smith-Volterra-Cantor set]\label{ex:distance_extreme}
As in Example~\ref{ex:cantor} we let $u(x)=\dist(x,F)$ denote the distance function of the fat Smith-Volterra-Cantor set $F\subset \Omega$ with $\Omega=[0,1]$. Trivially, since $\Omega\setminus \Omega_{\max}=F$, it holds that 
$$u\in\left\lbrace u\in W^{1,\infty}_0(\Omega)\st \calJ(u)=1\wedge \inn\left(\Omega\setminus \Omega_{\max}\right)=\emptyset\right\rbrace $$
but we will show that $u\notin\extr(B_\calJ)$. To this end, let $f=u'$ which is defined almost everywhere and meets $\norm{f}_\infty=1$. We define
\begin{align}
    g_\pm(x):=
    \begin{cases}
    f(x),\quad&x\notin F,\\
    \pm 1,\quad&x\in F\cap\left[0,\frac{1}{2}\right],\\
    \mp 1,\quad&x\in F\cap\left[\frac{1}{2},1\right],
    \end{cases}
\end{align}
and observe that $g_+\neq g_-$ since $F$ has positive measure. Next, we define the functions for almost every $x\in \Omega$ 
\begin{align}
    \tilde{f}(x)&:=\frac{1}{2}g_+(x)+\frac{1}{2}g_-(x)=
\begin{cases}
f(x),\quad&x\notin F,\\
0,\quad&x\in F,
\end{cases}\\
\tilde{u}(x)&:=\int_0^x\tilde{f}(t)\d t.
\end{align}
Using the definition of function $\tilde{f}$ and the fact $\int_a^b f(t)\d t=0$ for every maximally chosen interval $(a,b)\subset \Omega\setminus F$, it is easy to see that $\tilde{u}=u$ holds almost everywhere in $\Omega$. In particular, this also implies that $\tilde{f}=f$ almost everywhere. Finally, we can express $u$ as $u=v_+/2+v_-/2$, where
$$
v_\pm(x):=\int_0^x g_\pm(t)\d t
$$
meet $v_\pm\in W^{1,\infty}_0(\Omega)$ and hence $\calJ(v_\pm)=\norm{v'_\pm}_\infty=\norm{g_\pm}_\infty=1$. This shows that $u$ is no extreme point.
\end{example}
The construction of this example carries over to the general case and allows us to prove that the first inclusion in Proposition~\ref{prop:extreme} is an equality in one space dimension. Note that for Lipschitz continuous functions with one prescribed value in the interval the following was already proved in \cite{rolewicz1984optimal}. However, since we demand zero boundary conditions on both boundary points, the proof changes.
\begin{prop}[Extreme points in one space dimension]
Let $\Omega\subset\R$ be an interval. Then it holds
\begin{align}
    \extr(B_\calJ)=\left\lbrace u\in W^{1,\infty}_0(\Omega)\st \abs{\Omega\setminus\Omega_{\max}}=0\right\rbrace.
\end{align}
\end{prop}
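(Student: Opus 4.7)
The inclusion $\{u\st |\Omega\setminus\Omega_{\max}|=0\}\subset\extr(B_\calJ)$ is already contained in Proposition~\ref{prop:extreme}, so only the reverse inclusion requires a proof when $\Omega=(a,b)\subset\R$. I argue by contrapositive: given $u\in B_\calJ$ with $|\Omega\setminus\Omega_{\max}|>0$, I will construct $v_+\neq v_-$ in $B_\calJ$ with $u=\tfrac12(v_++v_-)$. The degenerate case $\calJ(u)<1$ (including $u\equiv 0$) is immediate: any nonzero $\phi\in W^{1,\infty}_0(\Omega)$ with $\calJ(\phi)\leq 1-\calJ(u)$ yields $u=\tfrac12(u+\phi)+\tfrac12(u-\phi)$ with both summands in $B_\calJ$. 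Hence I may assume $\calJ(u)=\norm{u'}_\infty=1$, and the assumption on $\Omega_{\max}$ then guarantees that $A_\delta:=\{x\in\Omega\st|u'(x)|\leq 1-\delta\}$ has positive Lebesgue measure for some $\delta>0$, since $\Omega\setminus\Omega_{\max}=\bigcup_{k\in\N}A_{1/k}$.

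\textbf{Construction of the perturbation.} One-dimensionality enters through an intermediate value argument. The map $c\mapsto|A_\delta\cap(a,c)|$ is continuous, nondecreasing, and runs from $0$ to $|A_\delta|$ as $c$ varies in $[a,b]$, so there exists $c^*\in(a,b)$ with $|A_\delta\cap(a,c^*)|=|A_\delta\cap(c^*,b)|>0$. Writing $A_1:=A_\delta\cap(a,c^*)$ and $A_2:=A_\delta\cap(c^*,b)$, I define
\[
\phi := \delta(\chi_{A_1} - \chi_{A_2}), \qquad \Phi(x) := \int_a^x \phi(t)\d t.
\]
By construction $\int_a^b\phi\d t=0$, hence $\Phi(a)=\Phi(b)=0$ and $\Phi\in W^{1,\infty}_0(\Omega)\setminus\{0\}$. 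Setting $v_\pm:=u\pm\Phi$, one has $|v_\pm'|\leq (1-\delta)+\delta=1$ almost everywhere on $A_\delta$ and $|v_\pm'|=|u'|\leq 1$ elsewhere, so $v_\pm\in B_\calJ$. Since $v_+\neq v_-$ but $u=\tfrac12(v_++v_-)$, $u$ is not extreme, contradicting the hypothesis and completing the argument.

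\textbf{Main obstacle.} The only non-routine ingredient is the splitting of $A_\delta$ by the point $c^*$, which rests squarely on the linear order of the interval and on continuity of the Lebesgue measure. This step is what makes the statement genuinely one-dimensional: in higher dimensions the analogous strategy fails because $\Phi$ would have to be a scalar function whose gradient is a prescribed vector field with compensating averages in every direction while still vanishing on the entire boundary $\partial\Omega$, a nonlocal constraint that generically has no solution of the naive $\pm\delta$ form. This is precisely the obstruction which permits Rolewicz-type extreme points with large flat regions to exist in dimension $n\geq 2$, as already pointed out in the remark preceding the proposition.
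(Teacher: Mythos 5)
Your proof is correct and follows essentially the same route as the paper's: both perturb $u'$ by $\pm\delta$ on a positive-measure set where $|u'|\leq 1-\delta$, split that set at a single point so that the perturbation integrates to zero (the paper phrases this as an intermediate-value argument on $h(\alpha)=\int g_\pm$, which reduces exactly to your equal-measure median condition), and thereby write $u$ as a midpoint of two distinct elements of $B_\calJ$. The only addition is your explicit treatment of the degenerate case $\calJ(u)<1$, which the paper leaves implicit.
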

\begin{proof}
We just have to show the inclusion ``$\subset$''. Assume that we have a function $u\in W^{1,\infty}_0(\Omega)$ such that $|\Omega_0|>0$ where $\Omega_0:=\Omega\setminus \Omega_{\max}$. Without loss of generality we assume that $\Omega=[0,1]$. We let $f=u'$ denotes its derivative and since $|\Omega_0|>0$ there is some $\eps\in(0,1]$ such that set $\Omega_\eps:=\{x\in \Omega\st|f(x)|\leq 1-\eps\}$ has positive measure. We define
\begin{align}\label{eq:decomposition_1D_extreme}
    g_\pm(x)&=
    \begin{cases}
    f(x),\quad&x\in \Omega\setminus \Omega_\eps,\\
    f(x)\pm\eps,\quad&x\in \Omega_\eps^1,\\
    f(x)\mp\eps,\quad&x\in \Omega_\eps^2,
    \end{cases}
\end{align}
where the sets $\Omega_\eps^k$ for $k=1,2$ meet $\Omega_\eps=\Omega_\eps^1\,\dot{\cup}\,\Omega_\eps^2$ and are chosen in such a way that $\int_0^1g_\pm(t)\d t=0$. The construction works as follows. For $\alpha\in[0,1]$ we define the continuous function
$$h(\alpha)=\int_{\Omega\setminus \Omega_\eps}f(t)\d t+\int_{\Omega_\eps\cap[0,\alpha]}f(t)+\eps\d t+\int_{\Omega_\eps\cap[\alpha,1]}f(t)-\eps\d t.$$
Since $u$ vanishes on the boundary of $\Omega$ its derivative $f$ has zero mean. Hence, we find that
\begin{align*}
    h(0)&=\int_\Omega f(t)\d t-\eps|\Omega_\eps|=-\eps|\Omega_\eps|<0,\\
    h(1)&=\int_\Omega f(t)\d t+\eps|\Omega_\eps|=\eps|\Omega_\eps|>0.
\end{align*}
Hence, by the intermediate value theorem for continuous functions, there has to be $\tilde{\alpha}\in(0,1)$ such that $h(\tilde{\alpha})=0$. Setting $\Omega_\eps^1:=\Omega_\eps\cap[0,\tilde{\alpha}]$ and $\Omega_\eps^2:=\Omega_\eps\cap(\tilde{\alpha},1]$, we see from \eqref{eq:decomposition_1D_extreme} that $h(\tilde{\alpha})=0$ is equivalent to $\int_0^1g_\pm(t)\d t=0$.

It is obvious that $g_+\neq g_-$ and $\norm{g_\pm}_\infty=1$. Furthermore, it holds $f=g_+/2+g_-/2$ which means that we decompose $u=v_+/2+v_-/2$ where $v_\pm=\int_0^x g_\pm(t)\d t$ meet $\norm{v_\pm'}_\infty=\norm{g_\pm}_\infty=1$ and have zero boundary conditions due to $\int_0^1 g_\pm (t)\d t=0$. Hence it holds $\calJ(v_\pm)=1$ and we can conclude. 
\end{proof}

\section{Extension to finite weighted graphs}
\label{sec:graph}
In this section we analyse a discrete version of functional $\calJ$ within the framework of finite weighted graphs. This requires equipping the graph with suitable differential operators and function space structures, according to~\cite{elmoataz2015p}. The main appeal of differential calculus on graphs is certainly that it allows for complicated topologies, and generalises standard finite difference approximations on grids. Furthermore, graphs do not necessarily have to be interpreted as approximations of physical domains, but can also model images, networks, and databases. 

After introducing notation and important quantities related to finite weighted graphs, we analyse the functional $\calJ_w$, given in~\eqref{eq:fctl_graph} below. In more detail, we study its ground states, characterize its subdifferential and extreme points and investigate some properties of eigenfunctions. On of the main results is Theorem~\ref{thm:ground_states_distance_graph} below, which states that ground states are distance functions, just as in the continuous case. In general, many results carry over from the continuous case directly, which is why we omit most proofs. 

A finite weighted graph $G$ is a triple $G=(V,E,w)$, consisting of a finite set of vertices $V$, an edge set $E\subset V\times V$, and a weight function $w:E\to \R_{\geq 0}$. The notation $x\sim y$ for $x,y\in V$ indicates that $(x,y)\in E$. In the following, we assume the symmetry conditions
\begin{align}
    &x\sim y\iff y\sim x\\
    &w(x,y)=w(y,x),\quad\text{if }x\sim y.
\end{align}
Furthermore, we assume that the graph is connected, which means that for any two vertices $x,y\in V$ there are edges $(x_0,x_1),\,(x_1,x_2),\dots,(x_{n-1},x_n)\in E$ such that $x_0=x$ and $x_n=y$. On the graph we can define vertex functions $\H(V)=\{u:V\to\R\}$ and edge functions $\H(E)=\{q:E\to\R\}$ which can be viewed as real Hilbert spaces with the following inner products
\begin{align}
    \langle u,v\rangle&=\sum_{x\in V} u(x)v(x),\quad u,v\in\H(V),\\
    \langle q,p\rangle&=\sum_{x\sim y}q(x,y)\,p(x,y),\quad q,p\in\H(E).
\end{align}
If an edge function $q\in\H(E)$ meets $q(x,y)=-q(y,x)$ for all $x,y\in V$ we call $q$ anti-symmetric. Next, we define the weighted gradient $\nablaw$ of a vertex function $u\in \H(V)$ evaluated on an edge $(x,y)\in E$ as
\begin{align}
    (\nablaw u)(x,y):={w(x,y)}^{\frac{1}{2}}(u(y)-u(x)),
\end{align}
which makes $\nabla_wu:E\to\R$ an anti-symmetric edge function. Obviously, $\nabla_w:\H(V)\to\H(E)$ is a linear operator and its adjoint is given by $\nablaw^*=-\divw$, where 
\begin{align}
    (\divw q)(x):=\sum_{y\st x\sim y}{w(x,y)}^\frac{1}{2}(q(y,x)-q(x,y))
\end{align}
denotes the weighted divergence of an edge function $q\in\H(E)$ evaluated in $x_i\in V$. This implies the validity of the integration by parts formula 
\begin{align}
    \langle q,\nablaw u\rangle=-\langle\divw q,u\rangle,\quad\forall u\in\H(V),\,q\in\H(E).
\end{align}
Furthermore, we define the one-sided gradient
\begin{align}
    (\nablaw^- u)(x,y):={w(x,y)}^{\frac{1}{2}}(u(y)-u(x))_-,
\end{align}
where $(x)_-:=-\min(x,0)$ and introduce $p$-norms on $\H(V)$ and $\H(E)$ by setting
\begin{align}
    \norm{u}_{p\phantom{\infty}} &\ = \ \left(\sum_{x\in V}|u(x)|^p\right)^\frac{1}{p},\quad 1\leq p<\infty,\\
    \norm{u}_{\infty\phantom{p}} &\ = \ \max_{x\in V}|u(x)|,\\
    \norm{q}_{p\phantom{\infty}} &\ = \ \left(\sum_{x\sim y}|q(x,y)|^p\right)^\frac{1}{p},\quad 1\leq p<\infty,\\
    \norm{q}_{\infty\phantom{p}} &\ = \ \max_{x\sim y}|q(x,y)|.
\end{align}
Next we take a subset of the vertex set $\Gamma\subset V$ which we identify with a Dirichlet boundary, and consider the subspace $\H_0(V)=\{u\in\H(V)\st u(x)=0,\,\forall x\in\Gamma \}$ of all vertex functions which vanish on $\Gamma$. Analogous to \eqref{eq:fctl_v2}, we define the functional
\begin{align}\label{eq:fctl_graph}
    \calJ_w(u)=
    \begin{cases}
        \norm{\nablaw u}_{\infty},\quad&u\in\H_0(V),\\
        +\infty,\quad&\text{else}.
    \end{cases}
\end{align}
Note that also $\calJ_w$ is a convex and absolutely one-homogeneous functional on a Hilbert space. The aim of the following section is to analyse $\calJ_w$ and show analogous results as we have seen in Section~\ref{sec:analysis_fctl}. 
\subsection{Ground states and properties of the distance function}
\label{sec:graph_dist}
First we will study ground states of $\calJ_w$, i.e., functions $u^*\in\H_0(V)$ such that
\begin{align}
    u^*\in\argmin_{u\in\H_0(V)}\frac{\calJ_w(u)}{\norm{u}_2}.
\end{align}
Since ground states are invariant under multiplication with scalars we can again replace the problem with
\begin{align}\label{eq:ground_state_graph}
    u^*\in\argmax_{\substack{u\in\H_0(V)\\|\nablaw u|\leq 1}}\norm{u}_2.
\end{align}

\begin{thm}[Ground states are distance functions]\label{thm:ground_states_distance_graph}
Up to global sign, the unique solution of \eqref{eq:ground_state_graph} is given by 
\begin{align}\label{eq:graph_distance_fct}
    u^*(x)=d(x):=\min_{y\in\Gamma}d_w(x,y),\quad x\in V,
\end{align}
where 
\begin{align}
    d_w(x,y):=\min\left\lbrace\sum_{i=1}^n w(x_{i-1},x_i)^{-\frac{1}{2}}\st n\in\N,\;x_0\sim\dots\sim x_n,\;x_0=x,x_n=y\right\rbrace
\end{align}
denotes the graph distance of $x,y\in V$.
\end{thm}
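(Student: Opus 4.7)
The plan is to replicate, in purely combinatorial terms, the comparison argument that drives Theorem~\ref{thm:ground_states_distance}. Using one-homogeneity I may rephrase \eqref{eq:ground_state_graph} as maximising $\norm{u}_2$ over $u\in\H_0(V)$ subject to $\norm{\nabla_w u}_\infty\leq 1$, and the decisive observation is that this constraint unpacks to the edge-wise inequality $|u(y)-u(x)|\leq w(x,y)^{-1/2}$ for every $(x,y)\in E$, i.e.\ exactly a $1$-Lipschitz condition with respect to the graph distance $d_w$. Combined with $u|_\Gamma=0$, this will pin $d$ as the pointwise envelope of all admissible functions.

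I would first chain the edge-wise bound along an arbitrary path $x=x_0\sim x_1\sim\dots\sim x_n=y^\ast\in\Gamma$ via the triangle inequality to obtain $|u(x)|\leq\sum_{i=1}^n w(x_{i-1},x_i)^{-1/2}$; taking the infimum over such paths and over $y^\ast\in\Gamma$ yields $|u(x)|\leq d(x)$ for every admissible $u$. Next I would verify that $d$ itself is admissible: $d|_\Gamma=0$ is immediate, and for any edge $(x,y)\in E$, prepending that edge to a shortest path from $y$ to $\Gamma$ shows $d(x)\leq w(x,y)^{-1/2}+d(y)$, so by symmetry $|d(x)-d(y)|\leq w(x,y)^{-1/2}$. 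These two ingredients together give $\norm{u}_2\leq\norm{d}_2$ with equality at $u=d$, so $d$ is a ground state.

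For the ``up to global sign'' uniqueness, let $u^\ast$ be any maximiser. The pointwise bound $|u^\ast|\leq d$ combined with $\norm{u^\ast}_2=\norm{d}_2$ forces $|u^\ast(x)|=d(x)$ at every $x\in V$, so there is a sign function $\epsilon:\{d>0\}\to\{-1,+1\}$ with $u^\ast=\epsilon\,d$. On any edge $(x,y)$ with $d(x),d(y)>0$ at whose endpoints $\epsilon$ flips, admissibility would demand $d(x)+d(y)\leq w(x,y)^{-1/2}$; juxtaposed with the triangle inequality $|d(x)-d(y)|\leq w(x,y)^{-1/2}$ and the fact that $d(x)$ is realised by a shortest path to $\Gamma$, this rigidity rules out a flip on every such edge. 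Propagating this local constraint across the connected graph then yields $u^\ast\equiv\pm d$.

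The main obstacle I anticipate is precisely this last propagation step: strictly speaking, vertices of $\Gamma$ (where $d=0$ and $\epsilon$ is unconstrained) can separate the subgraph $\{d>0\}$ into components whose signs may be chosen independently while preserving admissibility; a short computation confirms that this indeed happens on e.g.\ a path graph with a single interior Dirichlet vertex. Consequently the ``up to global sign'' statement is tight exactly when the subgraph induced by $\{d>0\}$ is connected, and otherwise should be read componentwise. Everything else in the proof is a direct transcription of the graph triangle inequality and the Lipschitz interpretation of the gradient constraint.
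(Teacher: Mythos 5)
Your existence argument---interpret $\norm{\nablaw u}_\infty\le 1$ as $1$-Lipschitz continuity with respect to the graph metric $d_w$, chain the triangle inequality along any path to $\Gamma$ to get $|u(x)|\le d(x)$ pointwise, and verify that $d$ itself is admissible---is correct and is essentially the paper's own argument (the paper phrases the reduction to $|u^*|\le d$ via $u^*\mapsto|u^*|$, which preserves admissibility by the reverse triangle inequality).

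Where your proposal goes wrong is the uniqueness ``propagation'' step, and your own caveat does not capture the real failure. You correctly note that a sign flip of $\epsilon$ across an edge $(x,y)$ with $d(x),d(y)>0$ forces $d(x)+d(y)\le w(x,y)^{-1/2}$, and that this is impossible when $(x,y)$ lies on a shortest path realizing $d$ (because there $d(x)=d(y)+w(x,y)^{-1/2}$, so the condition would force $d(y)\le 0$). But this rigidity \emph{only} rules out flips across shortest-path edges; it says nothing about edges that do not participate in any shortest path to $\Gamma$, and on such edges a flip can be perfectly admissible. A concrete counterexample: take the path $x_0\sim x_1\sim x_2\sim x_3$ with $\Gamma=\{x_0,x_3\}$, $w(x_0,x_1)=w(x_2,x_3)=1$ and $w(x_1,x_2)$ very small, say $w(x_1,x_2)^{-1/2}=10$. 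Then $d=(0,1,1,0)$, the edge $(x_1,x_2)$ lies on no shortest path, and $\tilde u=(0,1,-1,0)$ is admissible (the gradient across the middle edge is $-2/10$) with $\norm{\tilde u}_2=\norm{d}_2$. Here the set $\{d>0\}=\{x_1,x_2\}$ is \emph{connected}, so the failure is not explained by the component structure you propose; what matters is whether the subgraph of shortest-path edges spans $\{d>0\}$. In fact this example shows the theorem's ``unique up to global sign'' claim is false for general weights, and the paper's own proof only establishes that $d$ is \emph{a} maximizer; it never addresses uniqueness. Your instinct that something is off is sound, but the component-wise caveat is the wrong diagnosis and the propagation argument cannot be repaired without an additional hypothesis (e.g.\ that every edge between non-boundary vertices lies on a shortest path, as happens for unit-weight grid graphs).
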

\begin{proof}
Since $d_w(\cdot,\cdot)$ is a distance and hence fulfills the triangle inequality is is standard to check that \eqref{eq:graph_distance_fct} is 1-Lipschitz and hence admissible in \eqref{eq:ground_state_graph}. To show that \eqref{eq:graph_distance_fct} indeed solves \eqref{eq:ground_state_graph} we note that by possibly replacing $u^*$ with $|u^*|$ one can restrict the maximization to non-negative functions. From there it is straightforward to see that $u(x)\leq d(x)$ for all $x\in V$ which implies that \eqref{eq:graph_distance_fct} solves \eqref{eq:ground_state_graph}. 
\end{proof}

Note that on graphs the distance function, and hence the solution of \eqref{eq:ground_state_graph}, does typically not fulfill $|(\nablaw d)(x,y)|=1$ for all $(x,y)\in E$, as the following simple example shows.

\begin{example}[Distance function with vanishing gradient]\label{ex:distance_on_graph}
We consider the graph $G=(V,E)$ with vertices $V=\{x_0,x_1,x_2,x_3\}$ and edges $E=\{(x_0,x_1),(x_1,x_2),(x_2,x_3)\}$. The weights are assumed to be one and we take $\Gamma=\{x_0,x_3\}$. Using compact tuple notation, the distance function is given by
$$
d=(0,1,1,0)
$$
and obviously it holds $(\nablaw d)(x_1,x_2) = 0$.
\end{example}

Of course, the fact that $|(\nablaw d)(x,y)|\neq 1$ in general, is due to the fact that $(\nablaw d)(x,y)$ can only be interpreted as directional derivative and not as full gradient. However, we have the following theorem.

\begin{prop}[Properties of the distance function]\label{prop:prop_dist_graph}
For all $x\in V$ and $y\sim x$ the distance function $d$ to $\Gamma$ meets
\begin{align}\label{eq:dir_deriv_dist}
    |(\nablaw d)(x,y)|&
    \begin{cases}
    =1,\quad\text{if }y\in\mathrm{SP}(x,\Gamma)\text{ or }x\in\mathrm{SP}(y,\Gamma)\\
    <1,\quad\text{else},
    \end{cases}
\end{align}
where 
\begin{align}
    \mathrm{SP}(x,\Gamma):=\left\{x_0\sim\dots\sim x_n,\; x_0=x,\;x_n\in\Gamma,\;d(x)=\sum_{i=1}^nw(x_{i-1},x_i)^{-\frac{1}{2}} \right\}
\end{align}
denotes the set of all shortest paths from $x$ to $\Gamma$.
\end{prop}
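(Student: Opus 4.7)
The plan is to derive both directions from the triangle inequality satisfied by the graph distance $d_w$. The proof naturally splits into an upper bound, valid for every edge, and a characterization of the equality case.

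First I would prove the universal bound $|(\nablaw d)(x,y)| \leq 1$ for all $x \sim y$. Since $\Gamma$ is a set and $d(\cdot) = \min_{z\in\Gamma}d_w(\cdot,z)$, the triangle inequality for $d_w$ gives $d(y) \leq d(x) + d_w(x,y)$, and by the single-edge path $x \sim y$ we have $d_w(x,y) \leq w(x,y)^{-1/2}$. Swapping the roles of $x$ and $y$ and combining yields $|d(y) - d(x)| \leq w(x,y)^{-1/2}$, from which $|(\nablaw d)(x,y)| = w(x,y)^{1/2}|d(y) - d(x)| \leq 1$ follows immediately.

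Next I would analyze when equality holds. Suppose $d(x) - d(y) = w(x,y)^{-1/2}$. Pick any shortest path $y = y_0 \sim y_1 \sim \dots \sim y_m \in \Gamma$ realizing $d(y)$. Then the concatenated path $x \sim y \sim y_1 \sim \dots \sim y_m$ has length $w(x,y)^{-1/2} + d(y) = d(x)$, which by definition of $d(x)$ is a shortest path from $x$ to $\Gamma$. Hence $y$ appears as the second vertex of a path in $\mathrm{SP}(x,\Gamma)$, which is the intended meaning of $y \in \mathrm{SP}(x,\Gamma)$. Conversely, if $y \in \mathrm{SP}(x,\Gamma)$, then there is a shortest path $x \sim y \sim x_2 \sim \dots \sim x_n \in \Gamma$, and the tail $y \sim x_2 \sim \dots \sim x_n$ has length $d(x) - w(x,y)^{-1/2}$, which bounds $d(y)$ from above; combined with the universal bound $d(y) \geq d(x) - w(x,y)^{-1/2}$ this forces $d(x) - d(y) = w(x,y)^{-1/2}$. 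The symmetric statement, exchanging $x$ and $y$, corresponds to $d(y) - d(x) = w(x,y)^{-1/2}$ and $x \in \mathrm{SP}(y,\Gamma)$.

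Finally, the trichotomy in \eqref{eq:dir_deriv_dist} follows: if neither $y \in \mathrm{SP}(x,\Gamma)$ nor $x \in \mathrm{SP}(y,\Gamma)$, then by the previous paragraph neither $d(x) - d(y) = w(x,y)^{-1/2}$ nor $d(y) - d(x) = w(x,y)^{-1/2}$ can hold, so the inequality in the universal bound is strict and $|(\nablaw d)(x,y)| < 1$. The only mild subtlety I foresee is bookkeeping around the notation $y \in \mathrm{SP}(x,\Gamma)$, since $\mathrm{SP}(x,\Gamma)$ is formally a set of paths rather than vertices; I would address this with a brief clarifying sentence stating that $y \in \mathrm{SP}(x,\Gamma)$ is interpreted as "$y$ is the second vertex of some path in $\mathrm{SP}(x,\Gamma)$", which by the adjacency $y \sim x$ is the only way $y$ can sensibly lie on such a path after $x$.
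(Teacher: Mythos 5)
Your proof is correct and follows essentially the same approach as the paper's: the universal bound $|(\nabla_w d)(x,y)|\leq 1$ from the triangle inequality, and characterization of the equality case via concatenation/truncation of shortest paths. You are somewhat more explicit than the paper in arguing the strict inequality by contraposition (the paper simply asserts the strict inequalities when $x,y$ do not lie on a common shortest path) and in clarifying the slight abuse of notation $y\in\mathrm{SP}(x,\Gamma)$, but the underlying argument is the same.
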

\begin{proof}
Let $x\in V$ and $y\sim x$ be a neighboring node. If $y\in SP(x,\Gamma)$, then $x\sim y\sim x_1\sim\dots\sim x_n$ with $x_n\in\Gamma$ is a shortest path for $x$ and $y\sim x_1\sim\dots\sim x_n$ a shortest path for $y$. Consequently, $d(x)$ and $d(y)$ differ by the value $d_w(x,y)=w(x,y)^{-\frac{1}{2}}$ which means $|(\nablaw d)(x,y)|=1$. If $x\in\mathrm{SP}(y,\Gamma)$ the same holds true by interchanging the roles of $x$ and $y$. 

In the case that $x$ and $y$ do not lie on a common shortest path it holds 
\begin{align*}
    d(y)&<d(x)+w(x,y)^{-\frac{1}{2}},\\
    d(x)&<d(y)+w(x,y)^{-\frac{1}{2}},
\end{align*}
and hence $|d(y)-d(x)|<w(x,y)^{-\frac{1}{2}}$, which implies $|(\nablaw d)(x,y)|<1$. 
\end{proof}
For a non-weighted graph, meaning that all weights are one, we can obtain a more precise characterization of the directional derivatives of the distance function. Furthermore, we show that the 1-norm of the one-sided gradient $\nablaw^-d$ as in a point $x\in V$ counts the number of optimal paths from $x$ to $\Gamma$.
\begin{cor}[Unitary weights]
Assume that $w(x,y)=1$ for all $x\sim y$. Then for all $x\in V$ and $y\sim x$ it holds
\begin{align}
    |(\nablaw d)(x,y)|=
     \begin{cases}
        1,\quad\text{if }y\in\mathrm{SP}(x,\Gamma)\text{ or }x\in\mathrm{SP}(y,\Gamma),\\
        0,\quad\text{else}.
    \end{cases}
\end{align}
Furthermore, it holds
\begin{align}\label{eq:one_sided_grad_dist}
    \sum_{x\sim y}|(\nablaw^-)d(x,y)|&=\#\{y\st y\in \mathrm{SP}(x,\Gamma)\}.
\end{align}
\end{cor}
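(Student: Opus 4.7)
The plan is to leverage the fact that with unitary weights the distance function $d$ is integer-valued on $V$: every path length $\sum_i w(x_{i-1},x_i)^{-1/2}$ reduces to the number of edges in the path, so $d_w(x,y)\in\N$ for all $x,y\in V$, and hence $d(x)\in\N$ for every $x\in V$. Consequently, for any adjacent pair $x\sim y$, the quantity
\[
(\nablaw d)(x,y)=d(y)-d(x)
\]
is an integer.

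For the first assertion I would simply combine this integrality with Proposition~\ref{prop:prop_dist_graph}. In the case $y\in\mathrm{SP}(x,\Gamma)$ or $x\in\mathrm{SP}(y,\Gamma)$, that proposition already gives $|(\nablaw d)(x,y)|=1$ directly (the unit weight assumption is not even needed here). Otherwise, Proposition~\ref{prop:prop_dist_graph} yields the strict inequality $|(\nablaw d)(x,y)|<1$; but since $(\nablaw d)(x,y)\in\Z$ in the unit-weight setting, this integer must equal $0$, establishing the dichotomy.

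For the second assertion I would observe that $(\nablaw^- d)(x,y)=(d(y)-d(x))_-$ takes values in $\{0,1\}$ by the first part, and that it equals $1$ iff $d(y)=d(x)-1$. The latter is equivalent to saying that $y$ is the first vertex following $x$ on some shortest path from $x$ to $\Gamma$, i.e.\ $y\in\mathrm{SP}(x,\Gamma)$ in the paper's notational convention used already in Proposition~\ref{prop:prop_dist_graph}. Indeed, if $d(y)=d(x)-1$ then concatenating the edge $(x,y)$ with any shortest path from $y$ to $\Gamma$ produces a path of length $d(x)$, hence a shortest one; conversely, the first edge of a shortest path from $x$ lowers $d$ by exactly one unit. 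Summing the indicator over all neighbors $y\sim x$ then yields the stated count.

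There is essentially no hard step: the substantive content is the integrality argument, and once that is in place the two identities follow immediately from Proposition~\ref{prop:prop_dist_graph} and the definition of $\nablaw^-$. The only point requiring care is the mild abuse of notation ``$y\in\mathrm{SP}(x,\Gamma)$'', which must be read as ``$y$ is a neighbor of $x$ lying as the second vertex on some shortest path from $x$ to $\Gamma$'', consistently with its prior use in the proof of Proposition~\ref{prop:prop_dist_graph}.
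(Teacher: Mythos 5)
Your proof is correct and follows essentially the same route as the paper: both deduce the first dichotomy by combining Proposition~\ref{prop:prop_dist_graph} with the integer-valuedness of $d$ under unit weights, and both obtain the counting identity by noting that $(\nablaw^- d)(x,y)$ vanishes when $d(y)\geq d(x)$ and equals $1$ exactly when $y$ is the next vertex on a shortest path from $x$ to $\Gamma$.
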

\begin{proof}
The first statement follows from Proposition~\ref{prop:prop_dist_graph}, observing that $1>|(\nablaw d)(x,y)|=|d(y)-d(x)|$ implies $d(x)=d(y)$ since $d$ takes only integer values. For the second statement we note that the one-sided gradient $(\nablaw^-d)(x,y)$ equals zero if $x\in \mathrm{SP}(y,\Gamma)$ since in this case $d(y)>d(x)$. Hence, it holds
\begin{align}
    |(\nablaw^- d)(x,y)|=
     \begin{cases}
        1,\quad\text{if }y\in\mathrm{SP}(x,\Gamma),\\
        0,\quad\text{else}.
    \end{cases}
\end{align}
which directly implies \eqref{eq:one_sided_grad_dist}.
\end{proof}

\subsection{Subdifferential and eigenfunctions}
\label{sec:graph_properties}
After having characterized the ground state of $\calJ_w$ as distance function and having studied its geometric properties, we proceed with the characterization of the subdifferential $\partial\calJ_w$ and study properties of eigenfunctions.

In the following, we fix a function $u\in\H_0(V)$, and define the set of edges where the gradient of $u$ attains its maximal modulus as
\begin{align}\label{eq:E_max}
    E_{\max}=\left\lbrace(x,y)\in E\st |(\nablaw u)(x,y)|=\calJ_w(u)\right\rbrace.
\end{align}
Note that $E_{\max}$ is never empty due to the finite dimensional nature of all quantities involved. The following proposition characterizes the subdifferential of $\calJ_w$ analogously to Proposition~\ref{prop:characterization}.

\begin{prop}[Characterization of the subdifferential]\label{prop:characterization_graph}
Let $u\in\H_0(V)\setminus\{0\}$ and let $E_{\max}$ be given by \eqref{eq:E_max}. Then it holds
\begin{align*}
    \partial\calJ_w(u)=\lbrace-\divw q\st q\in\H(E),\;\norm{q}_1=1,\;q(x,y)=0\;\forall (x,y)\in E\setminus E_{\max},\\
    q(x,y)(\nablaw u)(x,y)=|q(x,y)|\,|(\nablaw u)(x,y)|\;\forall (x,y)\in E_{\max}\rbrace.
\end{align*}
\end{prop}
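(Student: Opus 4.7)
My plan is to mirror the proof of the continuous version (Proposition~\ref{prop:characterization}) in the discrete setting, exploiting that $\calJ_w$ is a closed, convex, absolutely one-homogeneous functional on the finite-dimensional Hilbert space $\H(V)$. Via one-homogeneity, $\sg\in\partial\calJ_w(u)$ is equivalent to the two conditions $\langle\sg,u\rangle=\calJ_w(u)$ and $\langle\sg,v\rangle\leq\calJ_w(v)$ for all $v\in\H(V)$. The main tools I would use are the discrete integration-by-parts identity $\langle q,\nabla_w v\rangle=\langle-\divw q,v\rangle$, the $\ell^1/\ell^\infty$-duality $\norm{\nabla_w v}_\infty=\sup_{\norm{q}_1\leq 1}\langle q,\nabla_w v\rangle$, and compactness of the set $\{-\divw q : \norm{q}_1\leq 1\}\subset\H(V)$.

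For the \emph{if} direction, I would take $q$ with $\norm{q}_1=1$ that vanishes off $E_{\max}$ and is parallel to $\nabla_w u$ on $E_{\max}$, and verify the two subgradient conditions directly. Writing $L=\calJ_w(u)$, the support and parallelism hypotheses reduce $\langle-\divw q,u\rangle=\langle q,\nabla_w u\rangle$ to $\sum_{(x,y)\in E_{\max}}|q(x,y)|L=L\norm{q}_1=L$, matching $\calJ_w(u)$; meanwhile H\"older's inequality yields $\langle-\divw q,v\rangle=\langle q,\nabla_w v\rangle\leq\norm{q}_1\norm{\nabla_w v}_\infty\leq\calJ_w(v)$ on $\H_0(V)$, while on $\H(V)\setminus\H_0(V)$ the inequality is trivial since $\calJ_w=+\infty$ there.

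For the converse, my first task would be to represent any $\sg\in\partial\calJ_w(u)$ as $\sg=-\divw q$ with $\norm{q}_1\leq 1$, via a finite-dimensional Fenchel duality / separation argument exploiting that the support function of the compact convex set $\{-\divw q:\norm{q}_1\leq 1\}$ agrees with $\calJ_w$ on $\H_0(V)$. Once such a $q$ is produced, the equality $\langle\sg,u\rangle=L$ translates via integration by parts into $\langle q,\nabla_w u\rangle=L$, and combined with $\norm{q}_1\leq 1$ and $|(\nabla_w u)(x,y)|\leq L$ it would force the chain
\begin{align*}
L=\langle q,\nabla_w u\rangle\leq\sum_{x\sim y}|q(x,y)|\,|(\nabla_w u)(x,y)|\leq L\norm{q}_1\leq L
\end{align*}
to collapse to equalities throughout. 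Equality in the first step gives the edgewise parallelism $q(x,y)(\nabla_w u)(x,y)=|q(x,y)||(\nabla_w u)(x,y)|$, equality in the second forces $q(x,y)=0$ whenever $|(\nabla_w u)(x,y)|<L$, and equality in the third yields $\norm{q}_1=1$.

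The hard part will be the existence step in the converse, namely producing a divergence representative for every subgradient with the correct $\ell^1$-bound. Since $-\divw$ has a non-trivial cokernel and the subdifferential a priori lives in all of $\H(V)$, some care is needed to match the polar set of $\calJ_w$ with the image of the $\ell^1$-ball under $-\divw$; fortunately in the finite-dimensional setting this is handled cleanly by Fenchel--Moreau combined with the $\ell^1/\ell^\infty$-duality. Once the representation $\sg=-\divw q$ is in hand, the pointwise support and parallelism conditions on the edges fall out of the equality case in H\"older's inequality, exactly as in the continuous counterpart.
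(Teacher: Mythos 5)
Your ``if'' direction is correct, and your plan for the converse --- produce a divergence representative by finite-dimensional duality, then read the edgewise conditions off the equality case of H\"older --- mirrors Propositions~\ref{prop:characterization_integral} and~\ref{prop:characterization} in the way the paper intends. However, the step you flagged as the ``hard part'' is exactly where the argument as sketched fails, and ``handled cleanly by Fenchel--Moreau'' is too optimistic. Since $\calJ_w\equiv+\infty$ off $\H_0(V)$, only test directions $v\in\H_0(V)$ constrain a subgradient, so $\partial\calJ_w(u)$ is invariant under adding any vertex function supported on $\Gamma$ (equivalently, $\partial\calJ_w(u)+\H_0(V)^\perp=\partial\calJ_w(u)$). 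The set on the right-hand side of the proposition, by contrast, has prescribed values on $\Gamma$, so the literal set equality fails: you cannot write an arbitrary $\sigma\in\partial\calJ_w(u)$ as $-\divw q$. Concretely, on the path $V=\{x_0,x_1,x_2\}$ with unit weights, $\Gamma=\{x_0,x_2\}$ and $d=(0,1,0)$, one checks directly that $\sigma=(0,1,0)\in\partial\calJ_w(d)$, yet $\sigma=-\divw q$ would force $q(x_0,x_1)=q(x_1,x_0)$ and $q(x_1,x_2)=q(x_2,x_1)$, whence $-\divw q(x_1)=0\neq1$.

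What your Hahn--Banach/$\ell^1$--$\ell^\infty$ argument actually produces, once you run it for the functional $v\mapsto\langle\sigma,v\rangle$ \emph{restricted to} $\H_0(V)$ against the bound $\norm{\nablaw v}_\infty$, is a $q$ with $\norm{q}_1\le1$ and $\langle-\divw q,v\rangle=\langle\sigma,v\rangle$ for all $v\in\H_0(V)$, i.e.\ $-\divw q=\sigma$ on $V\setminus\Gamma$ only; your H\"older chain then collapses and gives $\norm{q}_1=1$, the support condition and the parallelism exactly as you describe. The paper is tacitly using precisely this weaker reading: in Section~\ref{sec:graph_properties} it observes that $\lambda u=-\divw q$ is in general unsolvable because of $\Gamma$, and therefore defines eigenfunctions by requiring $\lambda u=-\divw q$ only on $V\setminus\Gamma$. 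So either restate the proposition as an equality of restrictions to $V\setminus\Gamma$ (equivalently, add $\H_0(V)^\perp$ on the right), or make the projection to $\H_0(V)$ explicit in your existence step; the rest of your argument is sound.
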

Next we study extreme points of the unit ball $B_{\calJ_w}$ of $\calJ_w$, given by
\begin{align}
    B_{\calJ_w}=\left\lbrace u\in\H(V)\st\calJ_w(u)\leq 1\right\rbrace.
\end{align}

Next we turn to the study of eigenfunctions of $\partial\calJ_w$.
We should first remark that $\lambda u\in\partial\calJ_w(u)$ is not a good definition for eigenfunctions due to the Dirichlet conditions on $\Gamma$. This means that in general, one cannot find $u\in\H_0(V)$ and $q\in\H(E)$ such that $\lambda u=-\divw q$. This is illustrated in the following example.
\begin{example}
Let $V=\{x_0,x_1,x_2\}$, $E=\{(x_0,x_1),(x_1,x_2)\}$, and assume all weights are one. We set $\Gamma=\{x_0,x_2\}$. Then, trivially, the distance function $d=(0,1,0)$ is an eigenfunction. If we assume that $\lambda u=-\divw q\in\partial\calJ_w(u)$ then $d(x_0)=0$ implies $q(x_0,x_1)=q(x_1,x_0)$ by definition of the divergence operator. The characterization of the subdifferential Proposition~\ref{prop:characterization_graph} then tells us that $q(x_0,x_1)=0=q(x_1,x_0)$ since $q$ has to be parallel to $(\nablaw d)(x_0,x_1)=1$ and $(\nablaw d)(x_1,x_0)=-1$. The same holds for $q(x_1,x_2)$ and hence $q=0$ which contradicts $-\div q=\lambda d$.
\end{example}

\begin{definition}[Eigenfunctions of $\partial\calJ_w$]
We call $u\in\H_0(V)$ an eigenfunction of $\partial\calJ_w$ if there exist $\lambda>0$, and $q\in H(E)$ with $-\div q\in\partial J(u)$, such that
\begin{align}
    \langle\lambda u,v\rangle=\langle-\divw q,v\rangle,\quad\forall v\in\H_0(V).
\end{align}
This is equivalent to $\lambda u(x)=-\divw q(x)$ for all $x\in V\setminus\Gamma$.
\end{definition}

The next example shows that non-negative eigenfunctions of $\partial\calJ_w$ are not unique, opposed to the continuum case where Proposition~\ref{prop:positivity_efs} asserted that every non-negative eigenfunction is a ground state.

\begin{example}[Multiple non-negative eigenfunctions]
We return to the graph from Example~\ref{ex:distance_on_graph}. Functional $\calJ_w$ can be explicitly expressed as
$$\calJ_w(u)=\max(|u_1|,|u_2|,|u_1-u_2|)$$
where $u_i:=u(x_i)$ for $i=1,2$. The unit ball and dual unit ball of $\calJ_w$ are depicted in Figure~\ref{fig:eigenvectors}. Following \cite{bungert2019nonlinear}, eigenvectors are precisely all multiples of vectors in the dual unit ball whose orthogonal hyperplane is tangent to the boundary. Here they correspond to all multiples of the four vertex functions having the values
\begin{align*}
    (0,1/2,1/2,0),\quad 
    (0,1,0,0),\quad
    (0,0,1,0),\quad
    (0,-1,1,0).
\end{align*}
Note that, the first three are also extreme points of the primal unit ball (up to scalar multiplication), whereas the fourth one, marked in red, is not. Furthermore, the first three eigenfunctions are all non-negative.
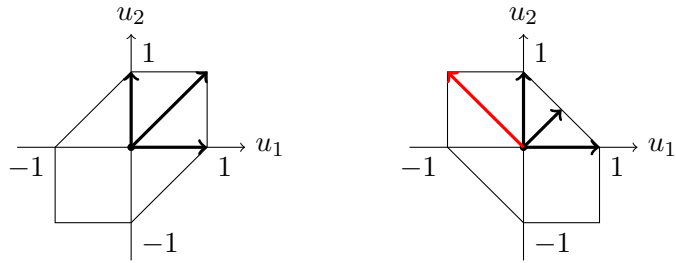
\begin{figure}[h!]
\centering
\begin{tikzpicture}[scale=1.0]
		\node at (0,0) [circle,fill,inner sep=1pt]{};
		\draw[->] (-1.5,0)--(1.5,0) node[right] {$u_1$};
		\draw[->] (0,-1.5)--(0,1.5) node[above] {$u_2$};
% 		\node at (1,0) [circle,fill,inner sep=1pt]{};
% 	    \node at (-1,0) [circle,fill,inner sep=1pt]{};
% 	    \node at (0,1) [circle,fill,inner sep=1pt]{};
% 	    \node at (0,-1) [circle,fill,inner sep=1pt]{};
		\draw (1,0) node [anchor=north west] {$1$};
	    \draw (0,1) node [anchor=south west] {$1$};     
	    \draw (0,-1) node [anchor=north west] {$-1$};
	    \draw (-1,0) node [anchor=north east] {$-1$};
        \draw (1,0)--(1,1)--(0,1)--(-1,0)--(-1,-1)--(0,-1)--(1,0);
        \draw[->,very thick](0,0)--(1,0);
        \draw[->,very thick](0,0)--(1,1);
        \draw[->,very thick](0,0)--(0,1);
\end{tikzpicture}
\hspace{1cm}
\begin{tikzpicture}[scale=1.0]
        \node at (0,0) [circle,fill,inner sep=1pt]{};
		\draw[->] (-1.5,0)--(1.5,0) node[right] {$u_1$};
		\draw[->] (0,-1.5)--(0,1.5) node[above] {$u_2$};
% 		\node at (1,0) [circle,fill,inner sep=1pt]{};
% 	    \node at (-1,0) [circle,fill,inner sep=1pt]{};
% 	    \node at (0,1) [circle,fill,inner sep=1pt]{};
% 	    \node at (0,-1) [circle,fill,inner sep=1pt]{};
		\draw (1,0) node [anchor=north west] {$1$};
	    \draw (0,1) node [anchor=south west] {$1$};     
	    \draw (0,-1) node [anchor=north west] {$-1$};
	    \draw (-1,0) node [anchor=north east] {$-1$};
        \draw (1,0)--(0,1)--(-1,1)--(-1,0)--(-1,0)--(0,-1)--(1,-1)--(1,0);
        \draw[->,very thick](0,0)--(1,0);
        \draw[->,very thick](0,0)--(.5,.5);
        \draw[->,very thick](0,0)--(0,1);
        \draw[->,very thick, red](0,0)--(-1,1);
\end{tikzpicture}
\caption{Primal and dual unit balls of $\calJ_w$ with all extreme points and eigenvectors (up to scalar multiplication).\label{fig:eigenvectors}}
\end{figure}

\end{example}

We have just seen that non-negative eigenfunction do in general not coincide with a ground state, as it is the case in the continuum. However, thanks to the following proposition, whose proof works just as in the continuous case of Proposition~\ref{prop:positivity_efs}, \emph{positive} eigenfunctions are unique.
\begin{prop}[Positive eigenfunctions]
Let $u\in\H_0(V)$ be a non-negative eigenfunction with $\calJ_w(u)=1$ and let $d$ denote the distance function to $\Gamma$. Then for every $x\in V$ it holds $u(x)=d(x)$ or $u(x)=0$. Consequently, any eigenfunction which is positive in $V\setminus\Gamma$ coincides with a ground state.
\end{prop}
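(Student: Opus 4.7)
The plan is to mimic the proof of Proposition~\ref{prop:positivity_efs} from the continuous setting, exploiting that on a finite graph the pointwise argument becomes immediate because sums of non-negative terms vanish only when each summand vanishes. Normalize so that $\calJ_w(u)=1$. By the eigenfunction definition there exist $\lambda>0$ and $q\in\H(E)$ with $-\divw q\in\partial\calJ_w(u)$ and $\lambda u(x)=-\divw q(x)$ for all $x\in V\setminus\Gamma$; since both sides vanish on $\Gamma$, this extends to the identity $\langle \lambda u,v\rangle=\langle -\divw q,v\rangle$ for every $v\in\H_0(V)$. The subgradient property and absolute one-homogeneity then yield $\lambda\langle u,v\rangle\leq\calJ_w(v)$ for all $v\in\H_0(V)$, with equality at $v=u$; in particular $\lambda=1/\norm{u}_2^2$.

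Next I would establish the pointwise bound $u\leq d$ on $V$. Because $\calJ_w(u)=1$ means $|u(x)-u(y)|\leq w(x,y)^{-1/2}$ for every edge $x\sim y$, summing along any path connecting $x\in V$ to a node $y\in\Gamma$ and using $u(y)=0$ shows that $|u(x)|\leq d_w(x,y)$; minimizing over $y\in\Gamma$ gives $|u(x)|\leq d(x)$, and since $u\geq 0$ this is the desired inequality $u\leq d$. Alternatively, this is precisely the graph analog of the comparison argument invoked via \cite{zagatti2014maximal} in the continuous case, which here is completely elementary.

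Now I test the eigenvalue inequality with $v=d$, using $\calJ_w(d)=1$ from Theorem~\ref{thm:ground_states_distance_graph}:
\begin{equation*}
\frac{\langle u,d\rangle}{\norm{u}_2^2}=\lambda\langle u,d\rangle\leq \calJ_w(d)=1,
\end{equation*}
so $\langle u,d\rangle\leq\norm{u}_2^2=\langle u,u\rangle$. Combined with $u\leq d$ and $u\geq 0$ this forces
\begin{equation*}
0\leq \sum_{x\in V}u(x)\bigl(d(x)-u(x)\bigr)=\langle u,d\rangle-\langle u,u\rangle\leq 0,
\end{equation*}
and since each summand is non-negative it must vanish; hence $u(x)=0$ or $u(x)=d(x)$ at every $x\in V$. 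If additionally $u>0$ on $V\setminus\Gamma$, then the first alternative is excluded off $\Gamma$, so $u\equiv d$ and $u$ is a ground state by Theorem~\ref{thm:ground_states_distance_graph}.

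No real obstacle arises: the only subtlety compared with the continuous proof is verifying $u\leq d$, which on a graph is a one-line path argument, and the pointwise conclusion replaces the measure-theoretic $\eps$-sublevel trick of Proposition~\ref{prop:positivity_efs} because a finite sum of non-negative numbers equals zero only when each term does.
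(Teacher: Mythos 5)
Your proof is correct and follows precisely the route the paper intends (the paper omits the argument, stating only that it "works just as in the continuous case" of Proposition~\ref{prop:positivity_efs}): establish the comparison $u\leq d$, test the subgradient inequality with $v=d$ to get $\langle u,d\rangle\leq\norm{u}_2^2$, and conclude. Your replacement of the citation to~\cite{zagatti2014maximal} by the elementary path argument, and of the measure-theoretic $\eps$-sublevel step by the observation that a finite sum of non-negative terms vanishes termwise, are exactly the simplifications the finite-dimensional setting affords.
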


\subsection{Extreme points}
As in the continuous case of Section~\ref{sec:extreme_points}, the main motivation for studying extreme points are representer theorems. They assert certain optimization problems involving $\calJ_w$ admit a solution which is a linear combination of extreme points. As before, we obtain a characterization of extreme points which is based on the existence of paths from every vertex to the boundary $\Gamma$ such that all directional derivatives are one along this path.
\begin{thm}[Characterization of extreme points]\label{thm:extremal_points_graph}
It holds that
\begin{align*}
    \extr(B_{\calJ_w})=\{u\in\H_0(V)\st\forall x\in V\;\exists x_0\sim\dots\sim x_n\;x_0=x,\;x_n\in\Gamma,\\
    \qquad\;|(\nablaw u)(x_{i-1},x_i)|=1,\;\forall i=1,\dots,n\}.
\end{align*}
\end{thm}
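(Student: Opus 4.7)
The plan is to prove both inclusions directly, exploiting the finite-dimensional nature of the graph and following the spirit of Theorem~\ref{thm:extremal_points} but in a much simpler setting. For the inclusion ``$\supseteq$'' I would use a telescoping argument along the prescribed path; for ``$\subseteq$'' I would argue by contraposition and construct an explicit non-trivial decomposition of $u$ using the characteristic function of a suitable set of ``bad'' vertices.

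For ``$\supseteq$'': assume $u\in\H_0(V)$ has the property that every $x\in V$ is joined to $\Gamma$ by a path $x=x_0\sim x_1\sim\dots\sim x_n\in\Gamma$ along which $|(\nablaw u)(x_{i-1},x_i)|=1$. Given any decomposition $u=\lambda v+(1-\lambda)w$ with $v,w\in B_{\calJ_w}$ and $\lambda\in(0,1)$, the estimate
\begin{align*}
|(\nablaw u)(a,b)|\leq\lambda|(\nablaw v)(a,b)|+(1-\lambda)|(\nablaw w)(a,b)|\leq 1
\end{align*}
must be an equality on each edge $(x_{i-1},x_i)$. By strict convexity of the modulus this forces $(\nablaw v)(x_{i-1},x_i)=(\nablaw w)(x_{i-1},x_i)=(\nablaw u)(x_{i-1},x_i)$, equivalently $v(x_i)-v(x_{i-1})=u(x_i)-u(x_{i-1})$. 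Telescoping along the path and using $v(x_n)=u(x_n)=0$ yields $v(x_0)=u(x_0)$, and analogously $w(x_0)=u(x_0)$. Since $x$ was arbitrary, $v=w=u$ and $u$ is extreme.

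For ``$\subseteq$'' I argue by contraposition. Let $E_{\max}$ be as in~\eqref{eq:E_max} and define
\begin{align*}
V_{\mathrm{bad}}:=\{x\in V\st\text{no path in }E_{\max}\text{ connects }x\text{ to }\Gamma\}.
\end{align*}
Trivially $V_{\mathrm{bad}}\cap\Gamma=\emptyset$, and by assumption $V_{\mathrm{bad}}\neq\emptyset$. The crucial observation is that for every ``cut'' edge $(x,y)$ with $x\in V_{\mathrm{bad}}$ and $y\notin V_{\mathrm{bad}}$, one necessarily has $(x,y)\notin E_{\max}$; otherwise $x$ would inherit a path to $\Gamma$ through $y$. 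Since $E$ is finite, there exists $\delta>0$ with $|(\nablaw u)(x,y)|\leq 1-\delta$ on every cut edge. Set $\phi:=\eps\chi_{V_{\mathrm{bad}}}\in\H_0(V)$. Then $\nablaw\phi$ vanishes on edges entirely inside $V_{\mathrm{bad}}$ or entirely outside, and on cut edges satisfies $|(\nablaw\phi)(x,y)|\leq\eps\sqrt{w_{\max}}$. Choosing $\eps>0$ small enough gives $\calJ_w(u\pm\phi)\leq 1$, and since $\phi\neq 0$ the decomposition $u=\tfrac{1}{2}(u+\phi)+\tfrac{1}{2}(u-\phi)$ is non-trivial, so $u\notin\extr(B_{\calJ_w})$.

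The main obstacle is identifying the correct perturbation in the second direction; the key point is that $\chi_{V_{\mathrm{bad}}}$ is constant on each connected component of the ``$E_{\max}$-subgraph'' meeting $V_{\mathrm{bad}}$, which guarantees that $\nablaw\phi$ is supported exclusively on cut edges---precisely the edges where $u$ still has slack. Once this structural fact is observed, the rest of the argument is a routine choice of $\eps$ using the uniform gap $\delta>0$ afforded by finiteness of $E$.
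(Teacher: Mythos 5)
Your ``$\supseteq$'' direction is sound: equality in $|(\nablaw u)|\leq\lambda|(\nablaw v)|+(1-\lambda)|(\nablaw w)|\leq 1$ on each path edge forces $(\nablaw v)=(\nablaw w)=(\nablaw u)$ there, and telescoping from the boundary vertex $x_n\in\Gamma$ back to $x_0=x$ gives $v(x)=w(x)=u(x)$. (The appeal to ``strict convexity of the modulus'' is slightly off --- $|\cdot|$ on $\R$ is piecewise linear --- but the conclusion follows anyway: equality in the scalar triangle inequality forces a common sign, and equality at the endpoint $1$ with $|a|,|b|\leq 1$ then forces $|a|=|b|=1$, hence $a=b$.)

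The ``$\subseteq$'' direction, however, has a genuine gap. You define $V_{\mathrm{bad}}$ in terms of $E_{\max}$, the set of edges where $|(\nablaw u)|=\calJ_w(u)$; but when $\calJ_w(u)<1$ this is not the set of edges with unit directional derivative, and the contraposition hypothesis does not give $V_{\mathrm{bad}}\neq\emptyset$. Concretely, take the four-vertex path of Example~\ref{ex:distance_on_graph} with $u=(0,c,c,0)$ for some $0<c<1$. Then $\calJ_w(u)=c<1$, the right-hand side of the theorem fails for $x_1$ and $x_2$ (no edge has directional derivative of modulus $1$), so $u$ must be shown not to be extreme; yet $E_{\max}=\{(x_0,x_1),(x_2,x_3)\}$ connects every vertex to $\Gamma$, so your $V_{\mathrm{bad}}$ is empty and $\phi=0$. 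Your assertion ``by assumption $V_{\mathrm{bad}}\neq\emptyset$'' therefore fails here. The remedy is to replace $E_{\max}$ throughout by $E_1:=\{(x,y)\in E\st|(\nablaw u)(x,y)|=1\}$: with this definition the contraposition hypothesis really does yield $V_{\mathrm{bad}}\neq\emptyset$, every cut edge lies outside $E_1$ and hence satisfies $|(\nablaw u)|\leq 1-\delta$ for some $\delta>0$ by finiteness of $E$, and your perturbation $\phi=\eps\chi_{V_{\mathrm{bad}}}$ then works exactly as you describe. (Equivalently, dispose of the case $\calJ_w(u)<1$ first --- any sufficiently small nonzero $\phi\in\H_0(V)$ shows non-extremality --- and run your $E_{\max}$-argument only when $\calJ_w(u)=1$, where $E_{\max}=E_1$.)
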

However, as opposed to the continuous case, even one-dimensional extreme functions do not necessarily have constant modulus of the gradient, as the following example shows.
\begin{example}
We return to Example~\ref{ex:distance_on_graph} with the distance function $d(x)=(0,1,1,0)$, which fulfills $\nablaw d(x_1,x_2)=0$. Nevertheless, it obviously is an extreme point taking the paths $x_1\sim x_0$ and $x_2\sim x_4$. If one however adds a node $x_4$ with $x_3\sim x_4$ and sets $u(x)=(0,1,1,0,0)$ this is not extreme anymore, since there is no path from $x_3$ to $x_0$ or $x_4$ along which $\nablaw u$ has modulus one.
\end{example}

\section*{Acknowledgement}
This work was supported by the European Unions Horizon 2020 research and innovation
programme under the Marie Sk{\l}odowska-Curie grant agreement No 777826 (NoMADS).

YK is supported by the Royal Society (Newton International Fellowship NF170045 Quantifying Uncertainty in Model-Based Data Inference Using Partial Order) and the Cantab Capital Institute for the Mathematics of Information.
\bibliographystyle{abbrv}
\bibliography{bibliography}

\begin{appendix}
\section{Proof of Proposition~\ref{prop:characterization_integral}}
Before we proceed to the proof of the theorem, we need a straightforward approximation lemma for Lipschitz functions.
\begin{lemma}\label{lem:approximation}
Let $v\in W^{1,\infty}_0(\Omega)$. Then there exists a sequence $(v_n)\subset C^\infty_c(\Omega)$ such that 
\begin{itemize}
    \item $\norm{\nabla v_n}_\infty\leq\norm{\nabla v}_\infty$
    \item $\norm{v-v_n}_\infty\to 0$ as $n\to\infty$
\end{itemize}
\end{lemma}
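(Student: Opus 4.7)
The plan is to first reduce to a function with compact support inside $\Omega$ and then apply standard mollification, while avoiding the usual cut-off multiplication which would blow up the gradient norm.

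First, I would invoke Remark~\ref{rem:lipschitz} to extend $v$ by zero to a Lipschitz function on all of $\R^n$ with Lipschitz constant $L := \norm{\nabla v}_\infty$. In particular, for every $x \in \Omega$ one has the pointwise bound $|v(x)| \leq L \cdot \dist(x,\partial\Omega)$.

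Next, for $\eps > 0$ I would define the truncation
\begin{align*}
    w_\eps(x) := \sign(v(x))\, (|v(x)| - \eps)_+.
\end{align*}
Since the scalar map $t \mapsto \sign(t)(|t|-\eps)_+$ is $1$-Lipschitz, composition yields $\norm{\nabla w_\eps}_\infty \leq L$. By construction $\norm{v - w_\eps}_\infty \leq \eps$. The crucial observation is that $\supp(w_\eps) \subset \{|v| \geq \eps\}$, and the pointwise bound above forces this set to satisfy $\dist(\supp(w_\eps), \partial\Omega) \geq \eps/L$. Thus $w_\eps$ is Lipschitz with the same constant as $v$ and is compactly supported strictly inside $\Omega$, which is exactly what the naive cutoff-multiplication fails to achieve.

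Then, for a standard mollifier $\rho_\delta$ with $\delta < \eps/L$, I would set $v_{\eps,\delta} := w_\eps * \rho_\delta$. Convolution with a radial mollifier preserves the Lipschitz constant, so $\norm{\nabla v_{\eps,\delta}}_\infty \leq \norm{\nabla w_\eps}_\infty \leq L$, and the choice of $\delta$ guarantees $v_{\eps,\delta} \in C^\infty_c(\Omega)$. Since $w_\eps$ is uniformly continuous, $\norm{v_{\eps,\delta} - w_\eps}_\infty \to 0$ as $\delta \to 0$. Choosing $\eps_n \to 0$ and $\delta_n$ small enough (say $\delta_n < \eps_n / L$ and such that $\norm{v_{\eps_n,\delta_n} - w_{\eps_n}}_\infty \leq \eps_n$) produces the sequence $v_n := v_{\eps_n,\delta_n}$ satisfying both required conditions.

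There is no serious obstacle here; the only subtle point is avoiding the naive cut-off $\eta v$ (with $\eta$ a smooth bump), whose gradient $\nabla\eta \cdot v + \eta \nabla v$ can have arbitrarily large sup-norm. The truncation $w_\eps$ bypasses this difficulty because it is itself a $1$-Lipschitz modification of $v$ that already has compact support in $\Omega$, after which mollification behaves trivially.
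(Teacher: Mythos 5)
Your proof is correct and follows essentially the same truncate-then-mollify strategy as the paper: your $w_\eps(x) = \sign(v(x))(|v(x)|-\eps)_+$ is exactly the paper's $w_n = (v^+ - 1/n)_+ - (v^- - 1/n)_+$ with $\eps = 1/n$, and both arguments rely on the bound $|v(x)| \leq L\,\dist(x,\partial\Omega)$ to obtain compact support before mollifying. If anything, your explicit requirement $\delta < \eps/L$ on the mollification radius is more careful than the paper's choice of $\eps = 1/(2n)$, which only works when $L < 2$.
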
 
\begin{proof}
First, we approximate $v$ with compactly supported functions $(w_n)\subset C^{0,1}_c(\Omega)$. To this end, set 
\begin{align*}
    w_n^\pm(x)=\min(v^\pm(x)-1/n,0)
\end{align*}
where $v^\pm$ denotes the positive and negative part of $v$. 
If we define $w_n:=w_n^+-w_n^-$ it holds
$$\norm{v-w_n}_\infty\leq 1/n\to 0,\quad n\to\infty.$$
and $\norm{\nabla w_n}_\infty\leq\norm{\nabla v}_\infty$.
Furthermore, all $w_n$ are compactly supported.
To see this one notes that
$$|v(x)|\leq\calJ(v)\dist(x,\partial\Omega),$$
which implies that $w_n=0$ for all $x\in\Omega$ such that $\dist(x,\partial\Omega)\leq{1}/(\calJ(v)n)$.
Now let $\eps=1/(2n)$ and define mollifications $v_n:=w_n\ast\varphi_\eps$. 
Then it holds $\norm{\nabla v_n}_\infty\leq\norm{\nabla w_n}_\infty\leq\norm{\nabla v}_\infty$ and
$$\norm{v-v_n}_\infty\leq\norm{v-w_n}_\infty+\norm{v_n-w_n}_\infty.$$
The first term on the right hand side can be bounded by $1/n$ as shown above. For the second term we notice
$$\left|v_n(x)-w_n(x)\right|\leq\int_\Omega|\varphi_\eps(y)|w_n(x-y)-w_n(x)|\d y\leq \norm{\nabla w_n}_\infty\frac{1}{2n}\leq\norm{\nabla v}_\infty\frac{1}{2n},\quad\forall x\in\Omega.$$
Hence, both terms converge to zero and we can conclude.
\end{proof}
\begin{proof}[Proof of Proposition~\ref{prop:characterization_integral}]
We follow the argumentation of \cite[Prop.~7]{bredies2016pointwise} who deal with the subdifferential of the total variation. 
Defining the set
$$C:=\{-\div q\st q\in C^\infty(\overline{\Omega},\R^n),\;\norm{q}_1\leq 1\}$$
it holds $\calJ(u)=\chi_C^*(u)$, where $\chi$ denotes the characteristic function of a set and $.^\ast$ is the convex conjugate.
Hence, it holds $\calJ^*(\sg)=\chi_C^{**}(\sg)=\chi_{\overline{C}}(\sg)$ and by~\eqref{eq:subdifferential} one gets that $\sg\in\partial\calJ(u)$ if and only if $\sg\in\overline{C}$ and $\langle\sg,u\rangle=\calJ(u)$.

Therefore, we just have to find the $L^2$-closure of $C$ and we claim it holds
$$\overline{C}=\{-\div q\st g=g+r,\,g\in G^1_0(\Omega),\,r\in\calN(\div;\Omega),\,\abs{q}(\Omega)\leq 1\}=:K.$$
\paragraph{Inclusion $K\subset\overline{C}$:}
For this inclusion it is enough to show that for any $q\in \calM(\Omega,\R^n)$ with $-\div q\in K$ it holds 
$$\int_\Omega-(\div q)v\dx\leq\calJ(v),\quad\forall v\in L^2(\Omega)$$
since this implies $\chi_{\overline{C}}(-\div q)=\calJ^*(-\div q)=0$ and hence $-\div q\in\overline{C}$.
Indeed, it suffices to check the inequality for $v\in W^{1,\infty}_0(\Omega)$. 
By Lemma~\ref{lem:approximation}, we can find a sequence functions $(v_n)\subset C^\infty_c(\Omega)$ such that $\norm{\nabla v_n}_\infty\leq \norm{\nabla v}_\infty$ and $\norm{v_n-v}_\infty\to 0$ as $n\to\infty$.
This implies
\begin{align*}
    \int_\Omega-(\div q)\, v\dx=\lim_{n\to\infty}\int_\Omega-(\div q)\,v_n\dx=\lim_{n\to\infty}\int_\Omega \nabla v_n\cdot\d q\dx\leq \abs{q}(\Omega)\norm{\nabla v_n}_\infty\leq\calJ(v).
\end{align*}
\paragraph{Inclusion $\overline{C}\subset K$:}
To prove the converse inclusion it suffices to show that $K$ is closed in $L^2(\Omega)$ since $C\subset K$ is obviously correct.
Let $(q_n)\subset \calM(\Omega,\R^n)$ be a sequence of measure such that $q_n=g_n+r_n$ with $(g_n)\subset G^1_0(\Omega)$, $(r_n)\subset\calN(\div;\Omega)$.
Furthermore, assume that $\abs{q_n}(\Omega)\leq 1$ and $-\div q_n\to\mu$ strongly in $L^2(\Omega)$.
From \cite[(1.2)]{auchmuty2006divergence} we infer that $\norm{g_n}_2$ is uniformly bounded and hence, up to a subsequence, $g_n$ converges weakly in $L^2(\Omega)$ to some $g\in L^2(\Omega)$. 
By the closedness of $G^1_0(\Omega)$ we infer that $g\in G^1_0(\Omega)$. 
We first show that $\mu=-\div g$.
To this end, we use the convergences $g_n\rightharpoonup g$ and $\div q_n\to\mu$ together with the fact that $\div g_n=\div q_n$ to compute
\begin{align*}
    \langle g,\nabla\varphi\rangle=\lim_{n\to\infty}\langle g_n,\nabla\varphi\rangle=-\lim_{n\to\infty}\langle\div g_n,\varphi\rangle=-\lim_{n\to\infty}\langle\div q_n,\varphi\rangle=\langle\mu,\varphi\rangle,\quad\forall\varphi\in C^\infty_c(\Omega),
\end{align*}
which shows $\mu=-\div g$.
Since $\abs{q_n}(\Omega)\leq 1$, by the sequential Banach-Alaoglu theorem there exists a measure $q\in\calM(\Omega,\R^n)$ such that, up to a subsequence, it holds $q_n\rightharpoonup q$.
The lower semi-continuity of the total variation implies $\abs{q}(\Omega)\leq 1$.
Furthermore, $g_n\rightharpoonup g$ implies that in fact $r_n\rightharpoonup r:=q-g$. 
By the closedness of $\calN(\div;\Omega)$, we infer $r\in\calN(\div;\Omega)$. 
Hence, we have shown that $\mu= -\div q\in K$, as desired.
\end{proof}

\section{Proof of Theorem~\ref{thm:extremal_points}}
In order to prove the theorem, we first need the following lemma which states a triangle inequality for the map $x\mapsto \eps_{x,z}^u$, given by \eqref{eq:def_eps}.
\begin{lemma}\label{lem:triangle_eps}
Let $u\in B_\calJ$, $x,y\in\Omega$, and $z\in\partial\Omega$. Then it holds
\begin{align*}
    \eps_{y,z}^u\leq\eps_{x,z}^u+|x-y|-|u(x)-u(y)|.
\end{align*}
\end{lemma}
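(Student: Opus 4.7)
The plan is to prove the triangle inequality by path extension: take a nearly optimal path from $z$ to $x$ witnessing the infimum defining $\eps_{x,z}^u$, then append a single segment from $x$ to $y$ and control its deficit by the Lipschitz property.

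More concretely, I would fix $\delta>0$ and use the definition of the infimum in \eqref{eq:def_eps} to pick a chain $z=x_0,x_1,\ldots,x_n=x$ together with nonnegative deficits $\eps_1,\ldots,\eps_n$ satisfying $|x_{i-1}-x_i|-\eps_i\leq|u(x_{i-1})-u(x_i)|$ and $\sum_{i=1}^n\eps_i\leq\eps_{x,z}^u+\delta$. I then extend this chain by setting $x_{n+1}:=y$ and defining
\begin{align*}
\eps_{n+1}:=|x-y|-|u(x)-u(y)|.
\end{align*}
The crucial positivity $\eps_{n+1}\geq 0$ is exactly the statement that $u$ is $1$-Lipschitz, which is guaranteed by the assumption $u\in B_\calJ$, i.e.\ $\calJ(u)=\norm{\nabla u}_\infty\leq 1$, together with Remark~\ref{rem:lipschitz} identifying $\calJ(u)$ with the Lipschitz constant of $u$ on $\Omega$.

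With the extended chain and extended deficit sequence, the inequality $|x_{i-1}-x_i|-\eps_i\leq|u(x_{i-1})-u(x_i)|$ holds by construction for $i=1,\ldots,n+1$, so the extended data is admissible in the definition of $\eps_{y,z}^u$. Hence
\begin{align*}
\eps_{y,z}^u\leq\sum_{i=1}^{n+1}\eps_i\leq\eps_{x,z}^u+\delta+|x-y|-|u(x)-u(y)|.
\end{align*}
Since $\delta>0$ was arbitrary, sending $\delta\searrow 0$ yields the claimed inequality.

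There is essentially no obstacle here; the only subtlety worth double-checking is that the path-extension step really is admissible for the definition of $\eps_{y,z}^u$, which requires the new final deficit $\eps_{n+1}$ to be nonnegative. This is precisely where the hypothesis $u\in B_\calJ$ enters, via the global Lipschitz bound with constant one on $\overline{\Omega}$.
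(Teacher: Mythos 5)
Your proof is correct and follows essentially the same path-extension argument as the paper: pick a near-optimal chain for $\eps_{x,z}^u$, append $y$ with the new deficit $|x-y|-|u(x)-u(y)|$ (nonnegative by $1$-Lipschitzness), and pass to the limit. The only cosmetic difference is that you approximate the infimum via an explicit $\delta$ while the paper uses a minimizing sequence $(\eps^n)$, which is the same idea.
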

\begin{proof}
We denote by $(\eps^n)_{n\in\N}$ a minimizing sequence  for $\eps_{x,z}^u$, i.e. $\lim_{n\to\infty}\eps^n=\eps_{x,z}^u$. This means that for each $n\in\N$  there exists a path of $n$ points $x_0^n=z,x_1^n,\dots,x_n^n=x$ connecting $z$ and $x$, and non-negative numbers $(\eps_i)_{i=1,\dots,n}$ such that 
\begin{align*}
    |x_{i-1}-x_i|-\eps_i&\leq|u(x_{i-1})-u(x_i)|,\quad i=1,\dots,n,\\
    \sum_{i=1}^n\eps_i&\leq\eps^n.
\end{align*}
Now we define the path of $n+1$ points
$$y_i=
\begin{cases}
x_i^n,\quad &i=0,\dots,n,\\
y,\quad &i=n+1,
\end{cases}
$$
which connects $z$ and $y$, set $\eps_{n+1}=|x-y|-|u(x)-u(y)|\geq 0$, and observe that this constellation is admissible for the minimization that defines $\eps^u_{y,z}$ since
\begin{align*}
    |x_{i-1}-x_i|-\eps_i&\leq|u(x_{i-1})-u(x_i)|,\quad i=1,\dots,n+1,\\
    \sum_{i=0}^{n+1}\eps_i&\leq\eps^n+|x-y|-|u(x)-u(y)|.
\end{align*}
Hence it holds 
$$\eps_{x,y}^u\leq \eps^n+|x-y|-|u(x)-u(y)|$$
and letting $n$ tend to infinity we obtain the desired inequality.
\end{proof}
Now we can proceed to the proof of the theorem.
\begin{proof}[Proof of Thm.~\ref{thm:extremal_points}]
The proof works similar to \cite{farmer1994extreme} with the main difference being that there the point $z=0$ is fixed. Since this causes non-trivial modifications, we present the full proof for completeness.

We start with the implication ``$\impliedby$'': to this end, we assume that \eqref{eq:cons_inf_eps} holds for almost all $x\in\Omega$. Since $\eps_{x,z}^u$ depends continuously on $z\in\partial\Omega$ and $\partial\Omega$ is compact, we infer that for almost all $x\in\Omega$ there exists $z\in\partial\Omega$ with $\eps_{x,z}^u=0$. Aiming for a contradiction we assume $u=v/2+w/2$ with two functions $v,w\in B_\calJ$. Since $\eps_{x,z}^u=0$, for any $\eps>0$ we can find finite sequences of points $(x_i)_{i=0,\dots,n}$ and numbers $(\eps_i)_{i=1,\dots,n}$ satisfying the restrictions such that
$$|x_{i-1}-x_i|-\eps_i\leq |u(x_{i-1})-u(x_i)|,\quad\forall i=1,\dots,n.$$
Without loss of generality we assume that $u(x_{i-1})-u(x_i)\geq 0$. Using also $u=v/2+w/2$ we infer
\begin{align*}
    -\eps_i&=|x_{i-1}-x_i|-\eps_i-|x_{i-1}-x_i|\\
    &\leq|u(x_{i-1})-u(x_i)|-|v(x_{i-1})-v(x_i)|\\
    &\leq u(x_{i-1})-u(x_i)-(v(x_{i-1})-v(x_i))\\
    &=w(x_{i-1})-w(x_i)-(u(x_{i-1})-u(x_i))\\
    &\leq |x_{i-1}-x_i|-(\eps_i-|x_{i-1}-x_i|)\\
    &=\eps_i,
\end{align*}
which means
\begin{align*}
    |u(x_{i-1})-u(x_i)-(v(x_{i-1})-v(x_i))|\leq\eps_i,\quad\forall i=1,\dots,n.
\end{align*}
Iterating this estimate, we obtain
\begin{align*}
    |u(x)-v(x)|&=|u(x_n)-v(x_n)|\\
    &=|u(x_n)-u(x_{n-1})-(v(x_n)-v(x_{n-1}))+u(x_{n-1})-v(x_{n-1})|\\
    &\leq \eps_n+|u(x_{n-1})-v(x_{n-1})|\\
    &\leq\dots\\
    &\leq\sum_{i=1}^n\eps_i+|u(x_0)-v(x_0)|\leq\eps,
\end{align*}
where we used that $x_0=z\in\partial\Omega$ and hence $u(x_0)=v(x_0)=0$ there. Since this estimate holds for all $\eps>0$ and almost all $x\in\Omega$ we infer $u=v$ and hence also $u=w$ in almost everywhere in $\Omega$, which means that $u$ is extreme.

For the converse implication ``$\implies$'' we assume that there exists a set $A\subset\Omega$ of positive measure such that it holds $\hat{\eps}_x:=\inf_{z\in\partial\Omega}\eps_{x,z}^u>0$ for almost all $x\in A$. We define the functions
\begin{align*}
    v_\pm(x)=
    \begin{cases}
        u(x)\pm\hat{\eps}_x,\quad&x\in A,\\
        u(x),\quad&x\in\Omega\setminus A,
    \end{cases}
\end{align*}
which obviously meet $v_+\neq v_-$ and $v_+/2+v_-/2=u$. It remains to show that $v_\pm\in B_\calJ$ to obtain that $u$ is not extreme. We consider $v_+$ only since the considerations for $v_-$ are identical. 
We just have to show that $|v_+(x)-v_-(y)|\leq |x-y|$ for all $x,y\in\Omega$. For $x,y\in\Omega\setminus A$ this is clear and hence we first assume that $x\in\Omega\setminus A$ and $y\in A$. In this case it holds
$$|v_+(x)-v_+(y)|=|u(x)-u(y)-\hat{\eps}_y|\leq|u(x)-u(y)|+\hat{\eps}_y.$$
Since $\hat{\eps}_x=0$ by the assumption $x\in\Omega\setminus A$ we can choose $z_0\in\partial\Omega$ such that $\eps_{x,z_0}^u=0$. By the definition of $\hat{\eps}_y$ and the triangle inequality from Lemma~\ref{lem:triangle_eps} we obtain
$$\hat{\eps}_y\leq\eps_{y,z_0}^u\leq \underbrace{\eps^u_{x,z_0}}_{=0}+|x-y|-|u(x)-u(y)|,$$
which yields
$$|v_+(x)-v_-(y)|\leq |x-y|.$$

Assume now that $x,y\in A$ in which case it holds
\begin{align*}
    |v_+(x)-v_+(y)|=|u(x)-u(y)+\hat{\eps}_x-\hat{\eps}_y|\leq|u(x)-u(y)|+|\hat{\eps}_x-\hat{\eps}_y|.
\end{align*}
Now we choose elements $z_x,z_y\in\partial\Omega$ such that $\hat{\eps}_x=\eps_{x,z_x}^u$ and $\hat{\eps}_y=\eps_{y,z_y}^u$. By using the triangle inequality from Lemma~\ref{lem:triangle_eps} for $z\in\{z_x,z_y\}$ we obtain
$$|u(x)-u(y)|\leq |x-y|+\frac{1}{2}(\eps_{x,z_x}+\eps_{x,z_y})-\frac{1}{2}(\eps_{y,z_x}+\eps_{y,z_y}).$$
After possibly exchanging the roles of $x$ and $y$ we can assume that the right hand side is smaller or equal than $|x-y|$ which concludes the proof.
\end{proof}
\end{appendix}
\end{document}